\newtheorem{lem}{Lemma}[section]
\newtheorem{thm}[lem]{Theorem}
\newtheorem{prop}[lem]{Proposition}
\newtheorem{coro}[lem]{Corollary}
\newtheorem{remar}[lem]{Remark}
\newcommand{\e}{\it 1}
\newcommand{\R}{\mathbb{R}}
\newcommand{\C}{\mathbb{C}}
\newcommand{\Q}{\mathcal{Q}}
\newcommand{\HC}{\mathbb{H}^{\mathbb{C}}}
\newcommand{\la}{\langle}
\newcommand{\ra}{\rangle}
\newcommand{\laa}{\langle\langle}
\newcommand{\raa}{\rangle\rangle}
\title{\bf Timelike surfaces into $4-$dimensional Minkowski space via spinors}
\author{Victor H. Patty-Yujra \\ victorp@im.unam.mx \\ Instituto de Matem\'aticas UNAM, Juriquilla-Quer\'etaro, M\'exico} 
\begin{document}
\maketitle

\begin{abstract}
We prove that an isometric immersion of a timelike surface in four-dimensional Minkowski space is equivalent to a normalized spinor field which is a solution of a Dirac equation on the surface. Using the quaternions and the complex numbers, we obtain a spinor representation formula that relates the spinor field and the isometric immersion. Applying the representation formula, we deduce a new spinor representation of a timelike surface in three-dimensional De Sitter space; we give a formula for the Laplacian of the Gauss map of a minimal timelike surface in four-dimensional Minkowski space in terms of the curvatures of the surface;  we obtain a local description of a flat timelike surface with flat normal bundle and regular Gauss map in four-dimensional Minkowski space, and we also give a conformal description of a flat timelike surface in three-dimensional De Sitter space.
\end{abstract}

\noindent\textit{Keywords:} Timelike surfaces; spinors; immersions; Weierstrass representation 

\noindent\textit{Mathematics Subject Classification 2010:} 53B25, 53C27, 53C42, 53C50

\section{Introduction}
We consider $\R^{3,1}$ the four-dimensional Minkowski space defined by $\R^4$ endowed with indefinite metric of signature $(3,1)$ given by $$\la\cdot,\cdot\ra=-dx_1^2+dx_2^2+dx_3^2+dx_4^2.$$ 
A surface $M\subset \R^{3,1}$ is said to be timelike if the metric $\la\cdot,\cdot\ra$ induces on $M$ a metric of signature $(1,1).$ In this paper, we are interested in the spinorial description of a timelike surface in $\R^{3,1},$ with given normal bundle and given mean curvature vector, and in its applications to the geometry of timelike surfaces in $\R^{3,1}.$ With this, we pretend to complete the spinorial description of semi-Riemannian surfaces in four-dimensional semi-Riemannian Euclidean spaces \cite{bayard1,bayard_lawn_roth,bayard_patty}.

Below we will state the main result of this paper. Let $M$ be an abstract simply connected timelike surface, $E\to M$ be a bundle of rank $2$ with a Riemannian metric and a compatible connection. We assume moreover that spin structures are given on $TM$ and on $E,$ and we define $\Sigma:=\Sigma M\otimes\Sigma E,$ the tensor product of the corresponding bundles of spinors. Let $\HC$ be the space of quaternions with coefficients in $\C$ defined by \[\HC:=\{ q_1\e+q_2I+q_3J+q_4K\mid q_1,q_2,q_3,q_4 \in \C\},\] where $I,J$ and $K$ are such that \[ I^2=J^2=K^2=-1 \hspace{0.2in}\mbox{and}\hspace{0.2in} IJ=-JI=K.\] 
We will see (Section \ref{preliminares}) that two natural bilinear maps
\begin{equation*}
H:\Sigma \times \Sigma \to \C \hspace{0.2in}\mbox{and}\hspace{0.2in} \laa\cdot,\cdot\raa: \Sigma\times\Sigma \to \HC
\end{equation*} are defined on $\Sigma.$ We have the following: \\

\begin{thm}\label{principal}
Let $\vec{H}$ be a section of $E.$ The following three statements are equivalent.
\begin{enumerate}
\item[1-] There exists a spinor field $\varphi\in\Gamma(\Sigma)$ with $H(\varphi,\varphi)=1$ solution of the Dirac equation
\begin{equation*}
D\varphi=\vec{H}\cdot\varphi.
\end{equation*}
\item[2-] There exists a spinor field $\varphi\in\Gamma(\Sigma)$ with $H(\varphi,\varphi)=1$ solution of 
\begin{equation*}
\nabla_X\varphi=-\frac{1}{2}\sum_{j=1,2}\epsilon_je_j\cdot B(X,e_j)\cdot\varphi,
\end{equation*}
where $B:TM\times TM\to E$ is a bilinear and symetric map with $\frac{1}{2}tr_{}B=\vec{H},$ and where $(e_1,e_2)$ is an orthonormal frame of $TM$ and  $\epsilon_j=\la e_j,e_j\ra.$
\item[3-] There exists an isometric immersion $F:M \to \R^{3,1}$ with normal bundle $E,$ second fundamental form $B$ and mean curvature vector $\vec{H}.$
\end{enumerate} 
Moreover, the isometric immersion is given by the spinor representation formula $$F=\int\xi:M \longrightarrow \R^{3,1}\hspace{0.2in}\mbox{with}\hspace{0.2in} \xi(X):=\laa X\cdot\varphi,\varphi\raa,$$  for all $X\in TM,$ where $\xi$ is a closed $1$-form on $M$ with values in $\R^{3,1}.$
\end{thm}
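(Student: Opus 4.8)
The plan is to establish the cycle $3 \Rightarrow 2 \Rightarrow 1$ together with $1 \Rightarrow 2$ and $2 \Rightarrow 3$, and then to verify the representation formula once an immersion is available. The implication $2 \Rightarrow 1$ is the easy algebraic direction: contracting the connection equation by $\epsilon_j e_j\cdot$ and summing over $j=1,2$ turns the left-hand side into $D\varphi$, while on the right the Clifford relations $e_i\cdot e_j + e_j\cdot e_i = -2\epsilon_i\delta_{ij}$ collapse the double sum, and the symmetry of $B$ together with $\tfrac{1}{2}\,\mathrm{tr}\,B = \vec H$ leaves $\vec H\cdot\varphi$; the condition $H(\varphi,\varphi)=1$ is untouched. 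The implication $3 \Rightarrow 2$ is also essentially formal: one restricts the (trivial) spinor bundle of $\R^{3,1}$ to $M$, uses the splitting of ambient Clifford multiplication into tangential and normal parts to identify this restriction with $\Sigma = \Sigma M \otimes \Sigma E$, and checks that a suitably normalized constant ambient spinor restricts to a field $\varphi$ satisfying the spinorial Gauss formula, which is exactly the equation in statement $2$, with $H(\varphi,\varphi)=1$.

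The core of the theorem is $1 \Rightarrow 2$. The role of the constraint $H(\varphi,\varphi)=1$ is to guarantee that $\varphi$ is nowhere degenerate, so that $\varphi$ and its Clifford translates $e_1\cdot\varphi$, $e_2\cdot\varphi$, $e_1\cdot e_2\cdot\varphi$ form a frame of $\Sigma$ at each point. Consequently $\nabla_X\varphi$ can be written uniquely as $-\tfrac{1}{2}\sum_j\epsilon_j e_j\cdot \eta(X,e_j)\cdot\varphi$ for a bilinear map $\eta : TM \times TM \to E$, and the Dirac equation pins down $\tfrac{1}{2}\,\mathrm{tr}\,\eta = \vec H$. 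What remains is to prove that $\eta$ is symmetric; one then sets $B := \eta$. This is an identity in the Clifford algebra of $\R^{3,1}$: inserting the special form of $\nabla_X\varphi$ together with the Dirac equation into the bilinear maps $H$ and $\laa\cdot,\cdot\raa$, whose behaviour under Clifford multiplication is recorded in Section \ref{preliminares}, and separating the symmetric and antisymmetric parts of $\eta(X,Y)$ in $X,Y$, forces the antisymmetric part to vanish.

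For $2 \Rightarrow 3$, the existence of a global spinor field satisfying the connection equation of statement $2$ forces the compatibility equations of Gauss, Codazzi and Ricci for the triple $(TM, E, B)$. This follows by iterating the equation $\nabla_X\varphi = -\tfrac{1}{2}\sum_j\epsilon_j e_j\cdot B(X,e_j)\cdot\varphi$ to compute the curvature $R^{\Sigma}(X,Y)\varphi$, rewriting $R^{\Sigma} = R^{TM}\otimes\mathrm{id} + \mathrm{id}\otimes R^{E}$ via Clifford multiplication, and using once more that $\varphi$ trivializes $\Sigma$ to read off that the three classical equations hold. Since $M$ is simply connected, the fundamental theorem of submanifolds in the semi-Riemannian setting then yields an isometric immersion $F : M \to \R^{3,1}$ with normal bundle $E$, second fundamental form $B$, and mean curvature vector $\vec H$, completing the cycle.

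It remains to identify this immersion with $\int\xi$. Setting $\xi(X) := \laa X\cdot\varphi, \varphi\raa$ and differentiating, the terms produced by $\nabla\varphi$ involve $B$ antisymmetrically and cancel thanks to the symmetry of $B$ and the symmetry properties of $\laa\cdot,\cdot\raa$, so $\xi$ is a closed $\R^{3,1}$-valued $1$-form; as $M$ is simply connected, $\xi = dF$ for some $F : M \to \R^{3,1}$, well defined up to a translation, matching the ambiguity in $\int\xi$. That this $F$ is an isometric immersion inducing $E$ and $B$ is then read off from the algebraic relations between $\laa\cdot,\cdot\raa$, Clifford multiplication and $H(\varphi,\varphi)=1$. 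The main obstacle throughout is the algebra hidden in $1 \Rightarrow 2$ and in the closedness and isometry statements: in contrast with the Riemannian case, here $H$ and $\laa\cdot,\cdot\raa$ take values in $\C$ and $\HC$ rather than in $\R$, reflecting the split signature $(1,1)$ on $M$, so one must track carefully which of these forms is nondegenerate, so that $\varphi$ genuinely trivializes $\Sigma$, and handle null directions when expanding in an orthonormal frame $(e_1,e_2)$.
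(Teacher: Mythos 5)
Your overall circuit of implications and the use of the representation formula $\xi(X)=\laa X\cdot\varphi,\varphi\raa$ matches the paper's strategy, and the easy directions ($3\Rightarrow 2\Rightarrow 1$ from the spinorial Gauss formula, $2\Rightarrow 1$ by tracing) are correct.

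There is, however, a genuine gap in your treatment of $1\Rightarrow 2$, which is the heart of the theorem. You write that since $\varphi$ is nondegenerate, $\varphi, e_1\cdot\varphi, e_2\cdot\varphi, e_1\cdot e_2\cdot\varphi$ form a frame, and ``consequently'' $\nabla_X\varphi$ can be written uniquely as $-\tfrac12\sum_j\epsilon_je_j\cdot\eta(X,e_j)\cdot\varphi$ with $\eta(X,e_j)\in E$. That last assertion is not a consequence of the frame fact. What the nondegeneracy of $\varphi$ gives is only that $\nabla_X\varphi=\zeta(X)\cdot\varphi$ for some $\zeta(X)\in Cl_0(TM\oplus E)$, an $8$-dimensional real space. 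The subspace $\{\,e_1\cdot\nu_1+e_2\cdot\nu_2:\nu_1,\nu_2\in E\,\}$ that you want $\zeta(X)$ to land in is only $4$-dimensional, so the shape of $\zeta(X)$ is a real restriction, not a formal consequence. Killing the extra components is precisely where the two hypotheses of statement 1 are consumed, and your proposal never uses them for this purpose: $H(\varphi,\varphi)=1$ forces $H(\nabla_X\varphi,\varphi)=0$, eliminating the $\e$-component of $\zeta(X)$; the Dirac equation, inserted into the pairing $H(\nabla_X\varphi,e_1\cdot e_2\cdot\varphi)$, eliminates the $e_1\cdot e_2$-component (this needs the anti-Hermitian property $H(X\cdot\psi,\psi')=-\overline{H(\psi,X\cdot\psi')}$, not just a trace count); and finally one uses the algebraic identities $i\,e_2\cdot e_3=e_1\cdot e_4$ and $i\,e_3\cdot e_1=e_4\cdot e_2$ to recognise the surviving $\C$-span of $e_2\cdot e_3$ and $e_3\cdot e_1$ as $\{e_1\cdot\nu_1+e_2\cdot\nu_2\}$. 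The paper carries out exactly this reduction using the $H$-orthonormal set $\{\varphi,\ e_1\cdot e_2\cdot\varphi,\ e_2\cdot e_3\cdot\varphi,\ e_3\cdot e_1\cdot\varphi\}$, whose $H$-orthonormality is what makes the coefficients computable; the set you chose is not adapted to $H$, and in any case you would still have to run the same argument. Your remark that ``the Dirac equation pins down $\tfrac12\mathrm{tr}\,\eta=\vec H$'' is true but only captures the trace constraint, which by itself does not place $\zeta(X)$ in the required $4$-dimensional subspace.

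A secondary point: once the shape $\eta(X)=e_1\cdot\nu_1(X)+e_2\cdot\nu_2(X)$ is established, the paper does not derive the tensor $B$ by ``separating symmetric and antisymmetric parts of $\eta(X,Y)$.'' Instead it defines $B$ directly by $\la B(X,Y),\nu\ra=2\la X\cdot\nabla_Y\varphi,\nu\cdot\varphi\ra$ and then identifies $\nu_1(X)=\tfrac12 B(e_1,X)$, $\nu_2(X)=-\tfrac12 B(e_2,X)$, which recovers the Killing equation in the stated form; symmetry is a separate verification. Your ``separate the antisymmetric part'' suggestion is not obviously equivalent and is left entirely to the reader; it should be replaced by, or reduced to, the explicit coefficient computation.
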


The definitions of the Clifford product $"\cdot"$ on the spinor bundle $\Sigma,$ of the Dirac operator $D$ acting on $\Gamma(\Sigma)$ and of the immersion of $\R^{3,1}$ into $\HC$ are given in Section \ref{preliminares}. The proof of this theorem will be given in Section \ref{spinor rep}.

Using the representation formula, we give various applications concerning to the geometry of timelike surfaces in $\R^{3,1}:$ we start with a spinorial proof of the fundamental theorem of submanifolds (Remark \ref{teo_fund_inmersion}, Corollary \ref{tfs}); see in \cite{bayard1,bayard_lawn_roth,bayard_patty} a similar application in other contexts. We also give a classical formula for the Laplacian of the isometric immersion (Corollary \ref{laplacian-immersion}). 

As a second application of Theorem \ref{principal}, we give (using only one intrinsic spinor) a new representation of a timelike surface in three-dimensional De Sitter space (Remark \ref{S21}); our representation is different to that given in \cite{lawn_roth} where two spinors are needed. We also recover the representation of a timelike surface in three-dimensional Minkowski space given in \cite{bayard_patty}.  

The thrid application of Theorem \ref{principal} is a formula for the Laplacian of the Gauss map of a minimal timelike surface in $\R^{3,1}$ in terms of the Gauss and normal curvatures of the surface (Corollary \ref{laplacian-gauss-minima}); this formula generalizes a classical formula for minimal surfaces in Euclidean space.

The fourth application of Theorem \ref{principal} is the local description of a flat timelike surface with flat normal bundle and regular Gauss map in $\R^{3,1}$ (Corollary \ref{flat0} and \ref{flat1}). Using the extrinsic geometry of the immersion, we  give to the surface a Riemann surface structure with respect to which its Gauss map is holomorphic, and we prove that these surfaces are described by two holomorphic functions and two smooth functions satisfying a condition of compatibility; this is the main result of \cite{AGM}, that we prove here using spinors. 

The last application obtained from Theorem \ref{principal} is a conformal description of a flat timelike surface in three-dimensional De Sitter space (Corollary \ref{flat3}); our representation coincides with the description given by Aledo, G\'alvez and Mira in \cite[Corollary 5.1]{AGM}.

We quote the following related papers: the spinor representation of surfaces in $\R^3$ was studied by many authors, especially by Friedrich \cite{friedrich}, who interpreted a spinor field representing a surface in $\R^3$ as a constant spinor field of $\R^3$ restricted to the surface; following this approach, surfaces in $\mathbb{S}^3$ and $\mathbb{H}^3$ was studied by Morel \cite{morel} and surfaces in three-dimensional semi-Riemannian space forms was studied by Lawn and Roth \cite{lawn,lawn_roth}. The last two authors together with Bayard studied in \cite{bayard_lawn_roth} surfaces in four-dimensional space forms; spacelike surfaces in $\R^{3,1}$ was studied by Bayard \cite{bayard1}. The author together with Bayard studied in \cite{bayard_patty} Lorentzian surfaces in $\R^{2,2}$ and, different applications of this representation were given by the author in \cite{patty}. 

The paper is organized as follows. In Section \ref{preliminares} we describe the preliminaries concerning the spinors of $\R^{3,1}$ and the spin geometry of a timelike surface in $\R^{3,1}.$ In Section \ref{spinor rep} we prove the spinor representation theorem and we also give the spinor representation formula of the immersion by the spinor field. In Section \ref{app-1} we study the isometric immersion of a timelike surface in three-dimensional De Sitter space. Section \ref{app-2} is devoted to compute the Laplacian of the Gauss map of a timelike surface in $\R^{3,1}.$ We obtain the local description of a flat timelike surface with flat normal bundle and regular Gauss map in $\R^{3,1}$ in Section \ref{app-3}. Finally, in Section \ref{app-4} we deduce a conformal description of a flat timelike surface in three-dimensional De Sitter space.

\section{Preliminaries}\label{preliminares}
\subsection{Spinors of $\R^{3,1}$}\label{spinors R31}
In this section we describe the Clifford algebra of $\R^{3,1},$ the spinorial group and their representations (see \cite[Section 1]{bayard1}).

Using the Clifford map
\begin{eqnarray*}\label{aplicliff}
\R^{3,1} & \longrightarrow & \HC(2) \\
(x_1,x_2,x_3,x_4) & \longmapsto & 
\begin{pmatrix} 0 & i x_1\e+x_2I+x_3J+x_4K\\ -i x_1\e+x_2I+x_3J+x_4K & 0 \end{pmatrix}\notag
\end{eqnarray*} where $\HC(2)$ stands for the set of $2\times2$ matrices with entries belonging to $\HC,$ we get the Clifford algebra of $\R^{3,1}$ 
\begin{equation*} Cl(3,1)=\left\lbrace\begin{pmatrix} p & q\\ \widehat{q} & \widehat{p} \end{pmatrix}\in \HC(2)\mid p, q\in \HC \right\rbrace,
\end{equation*} where $\widehat{q}:=\widehat{q}_11+\widehat{q}_2I+\widehat{q}_3J+\widehat{q}_4K$ ($\widehat{q}_j$ means the usual conjugation in $\C$ of $q_j$) for all $q=q_1\e+q_2I+q_3J+q_4K\in \HC.$ 
The Clifford sub-algebra of elements of even degree is   
\begin{align}\label{elempar}
Cl_0(3,1)& =\left\lbrace \begin{pmatrix}
p & 0\\
0 & \widehat{p}
\end{pmatrix}\in \HC(2)\mid p\in\HC\right\rbrace\simeq\HC
\end{align}
and the subspace of elements of odd degree is
\begin{align*}\label{elem_impar}
Cl_1(3,1)& =\left\lbrace \begin{pmatrix}
0 & q\\
\widehat{q} & 0
\end{pmatrix}\in \HC(2)\mid q\in\HC\right\rbrace\simeq\HC.
\end{align*}

We consider the map $H:\HC\times\HC \longrightarrow \C$ defined by
\begin{equation*}
H(p,p')=p_1p_1'+p_2p_2'+p_3p_3'+p_4p_4'
\end{equation*}
where $p=p_1\e+p_2I+p_3J+p_4K$ and $p'=p_1'\e+p_2'I+p_3'J+p_4'K.$ It is $\C$-bilinear and symmetric. Its real part, denoted by $\la\cdot,\cdot\ra,$ is a real scalar product of signature $(4,4)$ on $\HC.$ The spinorial group is given by
\begin{equation*}
Spin(3,1):=\left\lbrace p\in \HC \mid H(p,p)=1 \right\rbrace\subset Cl_0(3,1).
\end{equation*}

Now, if we consider the identification 
\begin{equation}\label{iden_espa}
\R^{3,1}  \simeq \{i x_1\e+x_2I+x_3J+x_4K \in \HC\mid x_j\in \R\} 
\simeq \{q \in \HC\mid q=-\widehat{\overline{q}}\}, 
\end{equation}
where, if $q=q_1\e+q_2I+q_3J+q_4K\in \HC,$ $\overline{q}:=q_1\e-q_2I-q_3J-q_4K$ is the usual conjugation in $\HC,$ we get the double cover  
\begin{eqnarray}\label{cubriente}
\Phi: & Spin(3,1) & \longrightarrow  SO(3,1)\\
& p & \longmapsto  (q\in\R^{3,1} \longmapsto p\ q\ \widehat{p}^{-1}\in\R^{3,1}). \notag
\end{eqnarray} 
Here and below $SO(3,1)$ stands for the component of the identity of the semi-orthogonal group $O(3,1)$ (see \cite{oneill}). 

Let us denote by $\rho: Cl(3,1) \longrightarrow End_{\C}(\HC)$ the complex representation of $Cl(3,1)$ on $\HC$ given by 
\begin{equation*}\label{rep_alg} \rho\begin{pmatrix}p & q\\ \widehat{q} & \widehat{p}\end{pmatrix}:\hspace{0.2in} \xi\simeq \begin{pmatrix}\xi \\ \widehat{\xi}\end{pmatrix}\longmapsto \begin{pmatrix}p & q\\ \widehat{q} & \widehat{p}\end{pmatrix}\begin{pmatrix}\xi \\ \widehat{\xi}\end{pmatrix}\simeq p\xi+q\widehat{\xi},
\end{equation*} 
where the complex structure on $\HC$ is given by the multiplication by $K$ on the right. The spinorial representation of $Spin(3,1)$ is the restriction to $Spin(3,1)$ of the representation $\rho$ and reads 
\begin{eqnarray*}\label{rep_spin}
\rho_{|Spin(3,1)}: Spin(3,1) & \longrightarrow & End_{\C}(\HC) \\
p & \longmapsto & (\xi\in \HC \longmapsto p\xi\in\HC). \notag
\end{eqnarray*}
This representation splits into 
$\HC= S^+ \oplus S^-,$ where $S^+=\{ \xi\in\HC\mid \xi K=i\xi \}$ and $S^-=\{ \xi\in\HC\mid \xi K=-i\xi\};$ explicitly we have $$S^+=(\C\oplus\C J)(\e-iK)\hspace{0.2in}\mbox{and}\hspace{0.2in} S^-=(\C\oplus\C J)(\e+iK).$$ Note that, if $(e_1,e_2,e_3,e_4)$ stands for the canonical basis of $\R^{3,1},$ the complexified volume element $i\ e_1\cdot e_2\cdot e_3\cdot e_4$ acts as $+Id$ on $S^+$ and as $-Id$ on $S^-.$

\textit{Spinors under the splitting $\R^{3,1}=\R^{1,1}\times \R^2.$}
We consider the splitting $\R^{3,1}=\R^{1,1}\times\R^2$ and the corresponding inclusion $SO(1,1)\times SO(2)\subset SO(3,1).$ Using the definition \eqref{cubriente} of $\Phi,$ we get 
\begin{equation*}
\Phi^{-1}(SO(1,1)\times SO(2))=\{ \cos z+ \sin z I \mid z\in\C \}=:S^1_{\C} \subset Spin(3,1);
\end{equation*} more precisely, setting $z=r+is,$ $r,s\in\R,$ we have in $\HC,$ \begin{equation*}
\cos z+\sin z I=(\cosh s+i\sinh s I)(\cos r+\sin r I ),
\end{equation*} and $\Phi(\cos z+\sin z I)$ is the Lorentz transformation of $\R^{3,1}$ which consists of a Lorentz transformation of angle $2s$ in $\R^{1,1}$ and a rotation of angle $2r$ in $\R^2.$ Thus, defining 
\begin{equation*}\label{spin11}
Spin(1,1):=\{ \pm(\cosh s+i\sinh s I) \mid s\in\R \} \subset Spin(3,1)
\end{equation*} and 
\begin{equation*}\label{spin22}
Spin(2):= \{ \cos r+\sin r I \mid r\in\R \} \subset Spin(3,1),
\end{equation*} we have \begin{equation*}
S^1_{\C}=Spin(1,1).Spin(2)\simeq Spin(1,1)\times Spin(2) / \mathbb{Z}_2
\end{equation*} and the double cover $\Phi: S^1_{\C} \longrightarrow SO(1,1)\times SO(2).$

Finally, the representation 
\begin{eqnarray*}
Spin(1,1)\times Spin(2) & \longrightarrow & End_{\C}(\HC) \\
(g_1,g_2) & \longmapsto & \rho(g):\xi \to g\xi
\end{eqnarray*} where $g=g_1g_2\in S^1_{\C}=Spin(1,1).Spin(2),$ is equivalent to the representation $\rho_1\otimes\rho_2$ of $Spin(1,1)\times Spin(2),$ where $\rho_1$ and $\rho_2$ are the spinorial representations of $Spin(1,1)$ and $Spin(2);$ see \cite[Remark 1.1]{bayard1}.

\subsection{Spin geometry of a timelike surface in $\R^{3,1}$}\label{gauss_formula} 
\textit{Fundamental equations.} Let $M$ be an oriented timelike surface in $\R^{3,1}$ with normal bundle $E$ and second fundamental form $B:TM\times TM\to E$ defined by \[B(X,Y)=\overline{\nabla}_XY-\nabla_XY,\] where $\nabla$ and $\overline{\nabla}$ are the Levi-Civita connections of $M$ and $\R^{3,1}$ respectively. The second fundamental form satisfies the following fundamental equations (\cite{oneill}):
\begin{enumerate}
\item[1-] $K=|B(e_1,e_2)|^2-\la B(e_1,e_1),B(e_2,e_2) \ra$ (Gauss equation),
\item[2-] $K_N=\la (S_{e_3}\circ S_{e_4}-S_{e_4}\circ S_{e_3})(e_1),e_2\ra$ (Ricci equation),
\item[3-] $(\tilde{\nabla}_XB)(Y,Z)-(\tilde{\nabla}_YB)(X,Z)=0$ (Codazzi equation),
\end{enumerate}
where $K$ and $K_N$ are the curvatures of $M$ and $E,$ $(e_1,e_2)$ and $(e_3,e_4)$ are orthonormal basis of $TM$ and $E$ respectively, and where $\tilde{\nabla}$ is the natural connection induced on $T^*M^{\otimes 2}\otimes E.$ As usual, if $\nu\in E,$ $S_{\nu}$ stands for the symmetric operator on $TM$ such that, for all $X,Y\in TM,$ $$\la S_{\nu}(X),Y\ra=\la B(X,Y),\nu\ra.$$  

\begin{remar}\label{teo_fund_inmersion}
Let $M$ be an abstract timelike surface, $E\to M$ be a bundle of rank $2,$ equipped with a Riemannian metric and a compatible connection. We assume that $B:TM\times TM\to E$ is a bilinear map satisfying the equations $\textit{1-, 2-}$ and $\textit{3-}$ above; the fundamental theorem of submanifolds says that there exists locally a unique isometric immersion of $M$ in $\R^{3,1}$ with normal bundle $E$ and second fundamental form $B.$ We will prove this theorem in Corollary \ref{tfs}.
\end{remar}

\textit{Spinorial Gauss formula.} There exists an identification between the spinor bundle of $\R^{3,1}$ over $M,$  $\Sigma\R^{3,1}_{|M},$ and the spinor bundle of $M$ twisted by the spinorial normal bundle, $\Sigma:=\Sigma M\otimes\Sigma E$ (see \cite{bar} and the end of Section \ref{spinors R31}). Moreover, as in the Riemannian case we obtain a spinorial Gauss formula: for all $\varphi\in\Gamma(\Sigma)$ and all $X\in TM,$ 
\begin{equation*}\label{formula_gauss_espinorial}
\overline{\nabla}_X\varphi=\nabla_X\varphi+\frac{1}{2}\sum_{j=1,2}\epsilon_je_j\cdot B(X,e_j)\cdot\varphi.
\end{equation*}
where $\epsilon_j=\la e_j,e_j\ra,$ $\overline{\nabla}$ is the spinorial connection of $\Sigma\R^{3,1},$ $\nabla$ is the spinorial connection of $\Sigma$ defined by $\nabla=\nabla^M\otimes\nabla^E$ the tensor product of the spinor connections on $\Sigma M$ and on $\Sigma E,$ 
and the dot $"\cdot"$ is the Clifford acction of $\R^{3,1}.$ Thus, if we take $\varphi\in\Sigma\R^{3,1}$ parallel, its restriction to $M$, $\varphi:=\varphi|_M$ satisfies
\begin{equation*}\label{for_gauss_paralelo}
\nabla_X\varphi=-\frac{1}{2}\sum_{j=1,2}\epsilon_je_j\cdot B(X,e_j)\cdot\varphi,
\end{equation*} for all $X\in TM.$ Taking the trace, we have the following Dirac equation
\begin{equation*}\label{ecua_dirac_inmersion}
D\varphi=\vec{H}\cdot \varphi,
\end{equation*}
where $D\varphi:=-e_1\cdot\nabla_{e_1}\varphi+e_2\cdot\nabla_{e_2}\varphi$ and where $\vec{H}=\frac{1}{2}tr_{\la,\ra}B$ is the mean curvature vector of $M$ in $\R^{3,1}.$ 

\subsection{Twisted spinor bunble}\label{twisted sec}
Let $M$ be an abstract oriented timelike surface, $E\to M$ be a bundle of rank $2$ equipped with a Riemannian metric and a compatible connection, with given spin structures. We consider $$\Sigma:=\Sigma M\otimes\Sigma E,$$ the tensor product of spinor bundles constructed from $TM$ and $E.$ We endow $\Sigma$ with the spinorial connection $$\nabla:=\nabla^M\otimes\nabla^E,$$ the tensor product of the spinor connections on $\Sigma M$ and on $\Sigma E,$ and with the natural acction of the Clifford bundle $$Cl(TM\oplus E)\simeq Cl(TM)\widehat{\otimes} Cl(E),$$ see \cite{bayard1,bayard_lawn_roth,bayard_patty}. This permits to define the Dirac operator $D$ on $\Gamma(\Sigma)$ by 
\begin{equation*}
D\varphi=-e_1\cdot\nabla_{e_1}\varphi+e_2\cdot\nabla_{e_2}\varphi,
\end{equation*} where $(e_1,e_2)$ is an orthonormal frame of $TM.$

If we denote by $Q_1$ and $Q_2$ the $SO(1,1)$ and $SO(2)$ principal bundles of the oriented and orthonormal frames of $TM$ and $E,$ and by $\tilde{Q}_1 \to Q_1$ and $\tilde{Q}_2\to Q_2$ the given spin structures on $TM$ and $E,$ then $\Sigma$ is the vector bundle associated to the $Spin(1,1)\times Spin(2)$ principal bundle $\tilde{Q}:=\tilde{Q}_1\times_M\tilde{Q}_2,$ and to the representation $\rho_1\otimes\rho_2\simeq \rho$ of the structure group $Spin(1,1)\times Spin(2),$ that is $$\Sigma=\tilde{Q}\times \HC/\rho.$$
Since the group $S^1_{\C}=Spin(1,1).Spin(2)$ belongs to $Spin(3,1),$ which preserves the complex bilinear map $H$ defined on $\HC,$ the spinor bundle $\Sigma$ is also equipped with a complex bilinear map $H$ and with a real scalar product $\la\cdot,\cdot\ra:=\Re e\ H(\cdot,\cdot)$ of signature $(4,4).$ We note that $H$ vanishes on the bundles $\Sigma^+$ and $\Sigma^-$ since $H$ vanishes on $S^+$ and $S^-.$
We also define a $\HC$-valued scalar product on $\Sigma$ by
\begin{equation}\label{prod_escal_vector}
\laa\psi,\psi'\raa:=\overline{\xi'}\xi,
\end{equation}
where $\xi$ and $\xi'\in\HC$ are respectively the components of $\psi$ and $\psi'$ in some local section of  $\tilde{Q};$ this scalar product satisfies the following properties: \begin{equation}\label{prop_prod_escal_vector} \laa\psi,\psi'\raa=\overline{\laa\psi',\psi\raa}\hspace{0.2in}\text{and}\hspace{0.2in}\laa X\cdot\psi,\psi'\raa=-\widehat{\laa\psi,X\cdot\psi'\raa}
\end{equation} for all $\psi,\psi'\in\Sigma$ and for all $X\in TM\oplus E.$ 
Note that, by definition, $H(\psi,\psi')$ is the coefficient of $\e$ in the descomposition of $\laa\psi,\psi'\raa$ in the basis $\e,I,J,K$ of $\HC,$ and that (\ref{prop_prod_escal_vector}) yields
\begin{equation}\label{aplic_H_propiedad}
H(\psi,\psi')=H(\psi',\psi) \hspace{0.2in}\text{and}\hspace{0.2in} H(X\cdot\psi,\psi')=-\overline{H(\psi,X\cdot\psi')}.
\end{equation}
\textit{Notation.} We will use the next notation: if $\tilde{s}\in\tilde{Q}$ is a given spinorial frame, the brackets $[\cdot]$ will denote the coordinates in $\HC$ of the spinor fields in the frame $\tilde{s},$ that is, for all $\varphi\in\Sigma,$
\begin{equation*}
\varphi\simeq[\tilde{s},[\varphi]]\hspace{0.2in}\in\hspace{0.2in}\Sigma\simeq\tilde{Q}\times\HC/\rho.
\end{equation*}
We will also use the brackets to denote the coordinates in $\tilde{s}$ of the elements of the Clifford algebra $Cl(TM\oplus E):$ for $X\in Cl_0(TM\oplus E)$ and $Y\in Cl_1(TM\oplus E)$ will be respectively represented by $[X],[Y]\in\HC$ such that, in $\tilde{s},$ 
\begin{equation*}X\simeq\begin{pmatrix}[X] & 0\\ 0 & \widehat{[X]}\end{pmatrix} \hspace{0.2in}\text{and}\hspace{0.2in} Y\simeq\begin{pmatrix}0 & [Y]\\ \widehat{[Y]} & 0\end{pmatrix}.\end{equation*}
Note that
\begin{equation*}
[X\cdot\varphi]=[X][\varphi]\hspace{0.2in}\text{and}\hspace{0.2in}[Y\cdot\varphi]=[Y]\widehat{[\varphi]}
\end{equation*}
and that, in a spinorial frame $\tilde{s}\in\tilde{Q}$ such that $\pi(\tilde{s})=(e_1,e_2,e_3,e_4),$ where $\pi:\tilde{Q}\to Q_1\times_M Q_2$ is the natural projection onto the bundle of the orthonormal frames of $TM\oplus E$ adapted to the splitting, $e_1,e_2,e_3$ and $e_4\in Cl_1(TM\oplus E)$ are respectively represented by $i\e, I, J$  and $K\in \HC.$

\section{Spinor representation of timelike surfaces}\label{spinor rep}
In this section we will prove the spinor representation theorem of a timelike surface in $\R^{3,1}.$ This result is the generalization of the principal theorems of \cite{bayard1,bayard_lawn_roth,bayard_patty} and completes the spinorial description of semi-Riemannian surfaces in four dimensional semi-Riemannian Euclidean space.

\subsection{The proof of Theorem \ref{principal}}
The proof of affirmations $\textit{3-}\Rightarrow \textit{2-} \Rightarrow \textit{1-}$ are given by the spinorial Gauss formula (see Section \ref{gauss_formula}). 
As in \cite{friedrich} (and after in \cite{lawn,lawn_roth,morel,roth} and in \cite{bayard1,bayard_lawn_roth,bayard_patty}) the proof of $\textit{1-}\Rightarrow\textit{3-}$ relies on the fact that such spinor field necessarily solves a Killing type equation:\\

\begin{prop}\label{lema_princ}
If $\varphi$ is a solution of $D\varphi=\vec{H}\cdot\varphi,$ with $H(\varphi,\varphi)=1,$ then $\varphi$ satisfies
\begin{equation}\label{kill} 
\nabla_X\varphi= -\frac{1}{2}\sum_{j=1,2}\epsilon_je_j\cdot B(X,e_j) \cdot\varphi,
\end{equation}
for all $X\in \Gamma(TM),$ where $B:TM\times TM\to E$ is the bilinear and symetric map defined by \begin{equation*}\label{seg_for_fund}
\la B(X,Y),\nu\ra=2\la X\cdot\nabla_Y\varphi,\nu\cdot\varphi\ra
\end{equation*}
for all $X,Y\in\Gamma(TM)$ and all $\nu\in\Gamma(E).$ 

Moreover, the map $B$ satisfies the Gauss, Ricci and Codazzi equations and is such that $\vec{H}=\frac{1}{2}tr_{\la,\ra}B.$
\end{prop}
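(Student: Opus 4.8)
The plan is to mimic the classical argument of Friedrich, adapted to the Lorentzian/higher-codimension setting as in \cite{bayard1,bayard_lawn_roth,bayard_patty}. Starting from a solution $\varphi$ of $D\varphi=\vec H\cdot\varphi$ with $H(\varphi,\varphi)=1$, I would first introduce, for a fixed local orthonormal frame $(e_1,e_2)$ of $TM$ with $\epsilon_j=\la e_j,e_j\ra$, the field $T(X):=\nabla_X\varphi+\tfrac12\sum_j\epsilon_j e_j\cdot B(X,e_j)\cdot\varphi$, where $B$ is the bilinear map defined in the statement via $\la B(X,Y),\nu\ra=2\la X\cdot\nabla_Y\varphi,\nu\cdot\varphi\ra$; the goal is to show $T\equiv 0$. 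The symmetry of $B$ (in $X,Y$) is the first thing to check: it follows from the Dirac equation by the standard trick of writing $X\cdot\nabla_Y\varphi+Y\cdot\nabla_X\varphi$ in terms of $D\varphi$ and exploiting that Clifford multiplication by normal vectors is (anti-)symmetric for $\la\cdot,\cdot\ra$ on $\Sigma$ — here one uses the identity $H(X\cdot\psi,\psi')=-\overline{H(\psi,X\cdot\psi')}$ from \eqref{aplic_H_propiedad} together with the fact that $\nu$ is a \emph{spacelike} section of $E$, so that the relevant signs work out and the antisymmetric part of $2\la X\cdot\nabla_Y\varphi,\nu\cdot\varphi\ra$ is exactly $\la(X\cdot\nabla_Y\varphi-Y\cdot\nabla_X\varphi),\nu\cdot\varphi\ra=\la(e_1\cdot\nabla_{e_1}-e_2\cdot\nabla_{e_2})\varphi\wedge\ldots\ra$, which collapses via $D\varphi=\vec H\cdot\varphi$ and the normalization to something symmetric. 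That $\tfrac12\operatorname{tr}_{\la,\ra}B=\vec H$ is then immediate by taking the trace of the defining relation and using the Dirac equation once more.

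Next I would establish the Killing equation $T(X)=0$. The key algebraic input is the normalization $H(\varphi,\varphi)=1$, which forces $\varphi$ to be nowhere zero and, more importantly, means that at each point $\varphi$ together with its Clifford images $e_j\cdot\varphi$, $\nu\cdot\varphi$, $e_j\cdot\nu\cdot\varphi$ span the relevant spinor fibre (this is a representation-theoretic fact about the action of $Cl(TM\oplus E)$ on $\Sigma\simeq\HC$; since $H$ restricted to the orbit is nondegenerate one gets a basis adapted to $H$). Consequently it suffices to show that $\la T(X),\psi\ra=0$ for $\psi$ ranging over this spanning set. For $\psi=\nu\cdot\varphi$ and $\psi=e_k\cdot\varphi$ this is a direct computation: expanding $\la\nabla_X\varphi,\nu\cdot\varphi\ra$ and $\la\nabla_X\varphi,e_k\cdot\varphi\ra$ and comparing with the contribution of the $B$-term, using the Clifford relations $e_j\cdot e_k+e_k\cdot e_j=-2\epsilon_j\delta_{jk}$, the definition of $B$, and — crucially — the Dirac equation to rewrite $e_1\cdot\nabla_{e_1}\varphi-e_2\cdot\nabla_{e_2}\varphi=-\vec H\cdot\varphi$, everything cancels. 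The terms involving $\la\nabla_X\varphi,\varphi\ra$ and $\la\nabla_X\varphi,e_j\cdot\nu\cdot\varphi\ra$ are handled by differentiating the constancy of $H(\varphi,\varphi)$ and $H(\varphi,e_j\cdot\nu\cdot\varphi)$ respectively, using compatibility of $\nabla$ with $H$ and the symmetry/antisymmetry rules of \eqref{aplic_H_propiedad}. I expect this verification, although conceptually routine, to be the most laborious step because of the bookkeeping of the signs $\epsilon_j$ coming from the Lorentzian metric on $TM$.

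Finally, once $T\equiv 0$, i.e. $\nabla_X\varphi=-\tfrac12\sum_j\epsilon_j e_j\cdot B(X,e_j)\cdot\varphi$, the integrability conditions for this overdetermined system are obtained by computing the spinorial curvature $R(X,Y)\varphi=\nabla_X\nabla_Y\varphi-\nabla_Y\nabla_X\varphi-\nabla_{[X,Y]}\varphi$ in two ways: on one side it equals $\tfrac14\sum_{i<j}\big(\la R(X,Y)e_i,e_j\ra\,e_i\cdot e_j+R^E(X,Y)_{ij}\,\varepsilon_i\cdot\varepsilon_j\big)\cdot\varphi$ by the standard curvature formula on $\Sigma=\Sigma M\otimes\Sigma E$ (with $(\varepsilon_3,\varepsilon_4)$ a frame of $E$), and on the other side it is computed directly by differentiating the Killing equation, producing Clifford-algebra expressions in $(\tilde\nabla_X B)(Y,\cdot)$ and in quadratic terms in $B$. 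Matching the coefficients of the independent Clifford elements acting on the (pointwise) spanning spinor $\varphi$ — again using that $H$ is nondegenerate on the orbit to separate components — yields precisely the Gauss equation (coefficient of $e_1\cdot e_2$), the Ricci equation (coefficient of $\varepsilon_3\cdot\varepsilon_4$), and the Codazzi equation (the mixed $e_i\cdot\varepsilon_\alpha$ coefficients). This is the same mechanism used in \cite{bayard1,bayard_lawn_roth,bayard_patty}; I would cite those for the purely computational identities and only spell out the points where the signature $(1,1)$ of $TM$ changes a sign relative to the Riemannian case.
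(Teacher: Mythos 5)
Your plan takes a genuinely different route from the paper. The paper works with the explicit $H$-orthonormal \emph{complex} basis $\{\varphi,\, e_1\cdot e_2\cdot\varphi,\, e_2\cdot e_3\cdot\varphi,\, e_3\cdot e_1\cdot\varphi\}$ of $\Sigma$ (complex with respect to $i=-e_1\cdot e_2\cdot e_3\cdot e_4$), expands $\nabla_X\varphi$ in it, kills the coefficients on $\varphi$ (from $H(\varphi,\varphi)=1$) and on $e_1\cdot e_2\cdot\varphi$ (from the Dirac equation), and then recognizes the two surviving terms as $e_1\cdot\nu_1+e_2\cdot\nu_2$, which is subsequently matched with $B$ via the real scalar product. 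You instead posit the answer, form the remainder $T(X)$, and want to test it against a real spanning set. The philosophy is fine, but there is a concrete gap in the execution.

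The gap is that the family $\{\varphi,\ e_j\cdot\varphi,\ \nu\cdot\varphi,\ e_j\cdot\nu\cdot\varphi\}$ (with $j=1,2$ and $\nu\in\{e_3,e_4\}$) does \emph{not} span the fibre of $\Sigma$ in general, and in fact fails to span in the most relevant case. Take a spinorial frame $\tilde s$ with $[\varphi]=\e$ (this occurs for the restriction of a parallel spinor of $\R^{3,1}$, i.e.\ precisely the situation the representation formula is built to recover). Recalling $[e_1]=i\e$, $[e_2]=I$, $[e_3]=J$, $[e_4]=K$ and that $Cl_1$-elements act by $[Y\cdot\varphi]=[Y]\widehat{[\varphi]}$, one finds the coordinates of your nine test spinors are
$\{\e,\ i\e,\ I,\ J,\ K,\ iJ,\ iK,\ K,\ -J\}$,
which span only the seven-dimensional subspace $\operatorname{span}_{\R}\{\e,i\e,I,J,K,iJ,iK\}$ of the eight-dimensional $\HC$. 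The missing direction is $iI$, i.e.\ $[e_1\cdot e_2\cdot\varphi]$. Your set omits exactly the ``purely tangential'' bivector $e_1\cdot e_2\cdot\varphi$ (and the ``purely normal'' $e_3\cdot e_4\cdot\varphi$), because you only allow the mixed $e_j\cdot\nu$. But that is precisely the component of $\nabla_X\varphi$ that the paper must kill by a nontrivial use of the Dirac equation; without testing $T(X)$ against it, you cannot conclude $T(X)=0$. So the claimed ``representation-theoretic fact'' that the orbit under those particular Clifford elements spans, with $H$ nondegenerate on it, is simply false here.

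The fix is to enlarge your test set by $e_1\cdot e_2\cdot\varphi$ (equivalently $e_3\cdot e_4\cdot\varphi$, since these differ by the complex structure), and show $H(\nabla_X\varphi,e_1\cdot e_2\cdot\varphi)=0$ using $D\varphi=\vec H\cdot\varphi$ together with the identities \eqref{aplic_H_propiedad} -- at which point you have essentially reconstructed the paper's argument. Incidentally, a smaller remark: $H(\varphi,e_j\cdot\nu\cdot\varphi)$ is not a priori constant as you assert; its vanishing follows from the $H$-orthogonality relations for the basis above and is something to be \emph{established}, not assumed. The outline for the Gauss/Ricci/Codazzi equations via the two computations of the spinorial curvature is the standard route and matches what the paper cites from \cite{bayard1}.
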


Note that, in the proposition we use the same notation $\la\cdot,\cdot\ra$ to denote the scalar products on $TM,$ on $E$ and on $\Sigma.$

\begin{proof}
We consider the complex structure $i:=-e_1\cdot e_2\cdot e_3\cdot e_4,$ defined on the Clifford bundle $Cl(TM\oplus E)$ by the multiplication on the left, and on the spinor bundle $\Sigma$ by the Clifford action. The map $H:\Sigma\times\Sigma\to \C$ is $\C$-bilinear with respect to this complex structure, whereas the Clifford action satisfies \[i(X\cdot\varphi)=(i X)\cdot\varphi=-X\cdot(i \varphi),\] for all $\varphi\in\Sigma$ and $X\in TM\oplus E.$
Now, we consider the following spinors
\begin{equation*}\{ \varphi,\ \ e_1\cdot e_2\cdot\varphi,\ \ e_2\cdot e_3\cdot\varphi,\ \ e_3\cdot e_1\cdot\varphi\}.\end{equation*}
Using the identities in (\ref{aplic_H_propiedad}), we can show that these spinors form an $H$-orthonormal set of $\Sigma;$ in particular, for all $X\in TM$ we have
\begin{align*} \nabla_X\varphi & = H(\nabla_X\varphi,\varphi)\varphi-H(\nabla_X\varphi,e_1\cdot e_2\cdot\varphi)e_1\cdot e_2\cdot\varphi \\ & \ \ +H(\nabla_X\varphi,e_2\cdot e_3\cdot\varphi)e_2\cdot e_3\cdot\varphi\  -H(\nabla_X\varphi,e_3\cdot e_1\cdot\varphi)e_3\cdot e_1\cdot\varphi.
\end{align*}
Using $H(\varphi,\varphi)=1,$ we get $H(\nabla_X\varphi,\varphi)=0$ for all $X\in TM;$ on the other hand, using the Dirac equation $D\varphi=\vec{H}\cdot\varphi,$ we obtain $H(\nabla_X\varphi,e_1\cdot e_2\cdot\varphi)=0,$ for all $X\in TM:$ if $X=e_1$ (the case when $X=e_2$ is analogous) we have   \begin{align*}
H(\nabla_{e_1}\varphi,e_1\cdot e_2\cdot\varphi) &=-\overline{H(e_1\cdot\nabla_{e_1}\varphi,e_2\cdot\varphi)}
= \overline{H(-e_2\cdot \nabla_{e_2}\varphi+\vec{H}\cdot\varphi,e_2\cdot\varphi)}\\
&= -H(\nabla_{e_2}\varphi,\varphi)+\overline{H(\vec{H}\cdot\varphi,e_2\cdot\varphi)}=0
\end{align*} since $H(\nabla_{e_2}\varphi,\varphi)=0$ and  $$H(\vec{H}\cdot\varphi,e_2\cdot\varphi)=-\overline{H(\varphi,\vec{H}\cdot e_2\cdot\varphi)}=-H(e_2\cdot\varphi,\vec{H}\cdot\varphi)=-H(\vec{H}\cdot\varphi,e_2\cdot\varphi).$$
Thus, we can write $\nabla_X\varphi=\eta(X)\cdot\varphi,$
where $$\eta(X):=H(\nabla_X\varphi,e_2\cdot e_3\cdot\varphi)e_2\cdot e_3-H(\nabla_X\varphi,e_3\cdot e_1)e_3\cdot e_1.$$ Using the relations $i\ e_2\cdot e_3=e_1\cdot e_4$ and $i\ e_3\cdot e_1=e_4\cdot e_2,$ we can see that $\eta(X)$ has the form
\begin{equation}\label{expresi_eta}
\eta(X)=e_1\cdot \nu_1+e_2\cdot\nu_2,
\end{equation}
for some $\nu_1,\nu_2\in E.$ Now, for each $\nu \in E$ and $j=1,2$ we have
\begin{equation*}
\la B(e_j,X),\nu\ra=2\la e_j\cdot\nabla_X\varphi,\nu\cdot\varphi\ra=-2\la\nabla_X\varphi,e_j\cdot\nu\cdot\varphi\ra= -2\la\eta(X)\cdot\varphi,e_j\cdot\nu\cdot\varphi\ra,
\end{equation*}
using the expression \eqref{expresi_eta} of $\eta(X),$ we get
\begin{equation}\label{seg_form_fun_iguald}
\la B(e_j,X),\nu\ra =-2\la e_1\cdot\nu_1\cdot\varphi,e_j\cdot\nu\cdot\varphi\ra -2\la e_2\cdot\nu_2\cdot\varphi,e_j\cdot\nu\cdot\varphi\ra.
\end{equation}
We note that for all $\nu,\nu'\in E$ we have $$\la e_1\cdot e_2\cdot\varphi,\nu\cdot\nu'\cdot\varphi\ra=0$$ (the proof is analogous as Lemma 3.1 of \cite{bayard1}). Thus, the identity (\ref{seg_form_fun_iguald}) gives \begin{align*}
\la B(e_1,X),\nu\ra &=2\la\nu_1\cdot\varphi,\nu\cdot\varphi\ra=2\la\nu_1,\nu\ra,\\
\la B(e_2,X),\nu\ra &=-2\la\nu_2\cdot\varphi,\nu\cdot\varphi\ra=-2\la\nu_2,\nu\ra,
\end{align*}
therefore $\nu_1=\frac{1}{2}B(e_1,X)$ and $\nu_2=-\frac{1}{2}B(e_2,X),$ and thus, by (\ref{expresi_eta}), we obtain $$\eta(X)=-\frac{1}{2}\sum_{j=1,2}\epsilon_je_j\cdot B(X,e_j).$$ 
Finally, the Gauss, Ricci and Codazzi equations appear to be the integrability condition of \eqref{kill}. The proof is analogous to that given in \cite[Theorem 2]{bayard1} and will therefore be omitted.
\end{proof}

Using the fundamental theorem of submanifolds (see Remark \ref{teo_fund_inmersion}) we obtain the proof of the first part of Theorem \ref{principal}; the proof of the spinor representation formula is given in the next section.

\subsection{The spinor representation formula}\label{weierstrass_repre}
With the hypothesis of Theorem \ref{principal}, assume that we have a spinor field $\varphi\in \Gamma(\Sigma)$ such that \begin{equation}\label{1}
D\varphi=\vec{H}\cdot\varphi
\end{equation} with $H(\varphi,\varphi)=1.$ 
We define the $1$-form $\xi:TM\oplus E\to \HC$ by 
\begin{equation*}\label{forma_xi}
\xi(X)=\laa X\cdot\varphi,\varphi\raa\hspace{0.1in}\in\hspace{0.1in}\HC
\end{equation*} 
where the pairing $\laa.,.\raa:\Sigma\times\Sigma\to \HC$ is defined in \eqref{prod_escal_vector}. \\

\begin{prop} The $1-$form $\xi$ satisfies the following fundamental properties:
\begin{enumerate}
\item[1-] $\xi=-\widehat{\overline{\xi}},$ thus $\xi$ takes its values in $\R^{3,1}\subset\HC,$ and 

\item[2-] $\xi:TM\to \R^{3,1}$ is closed, \textit{i.e.} $d\xi=0.$
\end{enumerate}
\end{prop}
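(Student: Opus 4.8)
The plan is to verify both assertions by computing in a local spinorial frame $\tilde{s}\in\tilde{Q}$, using the explicit coordinate formulas $[X\cdot\varphi]=[X]\widehat{[\varphi]}$ for $X\in Cl_1(TM\oplus E)$ and the definition $\laa\psi,\psi'\raa=\overline{\xi'}\xi$ of the $\HC$-valued pairing. For property 1, I would write $\xi(X)=\laa X\cdot\varphi,\varphi\raa$ and apply the symmetry relation $\laa\psi,\psi'\raa=\overline{\laa\psi',\psi\raa}$ together with the second identity in \eqref{prop_prod_escal_vector}, namely $\laa X\cdot\psi,\psi'\raa=-\widehat{\laa\psi,X\cdot\psi'\raa}$. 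Combining these gives $\xi(X)=\laa X\cdot\varphi,\varphi\raa=\overline{\laa\varphi,X\cdot\varphi\raa}=\overline{-\,\widehat{\laa X\cdot\varphi,\varphi\raa}}=-\widehat{\overline{\xi(X)}}$, which is exactly the condition characterizing $\R^{3,1}\subset\HC$ in the identification \eqref{iden_espa}. So property 1 is essentially a one-line consequence of the algebraic properties of $\laa\cdot,\cdot\raa$ already recorded in the preliminaries.

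For property 2, I would compute $d\xi$ as a $2$-form: for $X,Y\in\Gamma(TM)$,
\[
d\xi(X,Y)=X\bigl(\xi(Y)\bigr)-Y\bigl(\xi(X)\bigr)-\xi([X,Y]),
\]
and I would evaluate the derivative terms using that $\laa\cdot,\cdot\raa$ is parallel for the spinorial connection $\nabla$ and that the Clifford action is parallel, so that $X(\laa Y\cdot\varphi,\varphi\raa)=\laa (\nabla_X Y)\cdot\varphi,\varphi\raa+\laa Y\cdot\nabla_X\varphi,\varphi\raa+\laa Y\cdot\varphi,\nabla_X\varphi\raa$. Taking a geodesic frame at a point so that $\nabla_X Y-\nabla_Y X=[X,Y]$ cancels the connection terms, the remaining expression is
\[
d\xi(X,Y)=\laa Y\cdot\nabla_X\varphi,\varphi\raa+\laa Y\cdot\varphi,\nabla_X\varphi\raa-\laa X\cdot\nabla_Y\varphi,\varphi\raa-\laa X\cdot\varphi,\nabla_Y\varphi\raa.
\]
Now I would substitute the Killing-type equation \eqref{kill} from Proposition \ref{lema_princ}, $\nabla_X\varphi=-\tfrac12\sum_j\epsilon_j e_j\cdot B(X,e_j)\cdot\varphi$, into each of the four terms. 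Each term becomes a sum of expressions of the form $\laa Y\cdot e_j\cdot B(X,e_j)\cdot\varphi,\varphi\raa$ or $\laa Y\cdot\varphi,\,e_j\cdot B(X,e_j)\cdot\varphi\raa$, which I would manipulate with \eqref{prop_prod_escal_vector} and the Clifford relations $Y\cdot e_j+e_j\cdot Y=-2\la Y,e_j\ra$ (and $e_j\cdot\nu=-\nu\cdot e_j$ for $\nu\in E$) to move the normal factors around. After reindexing, the symmetry of $B$ (so $B(X,Y)=B(Y,X)$) produces a cancellation between the $X,Y$-symmetric pieces, and the antisymmetric remainder should vanish by the algebraic orthogonality relations of the type $\la e_1\cdot e_2\cdot\varphi,\nu\cdot\nu'\cdot\varphi\ra=0$ that were already used in the proof of Proposition \ref{lema_princ}.

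I expect the bookkeeping in property 2 to be the main obstacle: one has to carefully track which terms are symmetric and which are antisymmetric in $X,Y$, and which Clifford products land in $Cl_0$ versus $Cl_1$, since only the latter contributes nontrivially to a pairing of the form $\laa\cdot\varphi,\varphi\raa$. A clean way to organize this is to first reduce to $X=e_1$, $Y=e_2$ by bilinearity, so that the four terms involve only the finitely many quantities $B(e_1,e_1),B(e_1,e_2),B(e_2,e_2)$, and then expand everything in the model $\HC$ using the coordinate dictionary ($e_1\leftrightarrow i\e$, $e_2\leftrightarrow I$, $e_3\leftrightarrow J$, $e_4\leftrightarrow K$); at that point the vanishing of $d\xi(e_1,e_2)$ becomes a finite identity in $\HC$ that can be checked directly. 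Since this is entirely parallel to the computations in \cite[Section 3]{bayard1} and \cite{bayard_patty}, I anticipate no new difficulty beyond adapting the signs $\epsilon_j$ coming from the Lorentzian signature on $TM$.
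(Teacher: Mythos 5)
Your proposal is correct and follows the same route the paper implicitly takes: the paper's own proof is essentially a pointer to the identities \eqref{prop_prod_escal_vector} for part 1 and to \cite[Proposition 4.1]{bayard1} and \cite[Lemma 2.3]{bayard_patty} for part 2, which are precisely the computations you sketch. Your one-line derivation of $\xi=-\widehat{\overline{\xi}}$ from the two identities in \eqref{prop_prod_escal_vector} is exactly what is meant. For part 2, your plan of expanding $d\xi(X,Y)$ in a geodesic frame and substituting the Killing-type equation is sound; one small remark is that the final cancellation is cleaner than you anticipate. Writing $[\eta(X)]\in\C J\oplus\C K$ and $[Y]\in\R i\e\oplus\R I$ in a spinorial frame and simplifying, one finds
\[
\laa Y\cdot\eta(X)\cdot\varphi,\varphi\raa+\laa Y\cdot\varphi,\eta(X)\cdot\varphi\raa
=\overline{g}\bigl(\la B(Y,X),e_3\ra J+\la B(Y,X),e_4\ra K\bigr)\widehat{g},
\]
with $g=[\varphi]$, so the two halves of $d\xi(X,Y)$ already agree term-by-term once the symmetry $B(X,Y)=B(Y,X)$ is invoked; the auxiliary orthogonality relation $\la e_1\cdot e_2\cdot\varphi,\nu\cdot\nu'\cdot\varphi\ra=0$ that you expect to need is in fact not required here. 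Otherwise, the proposal is complete and matches the cited proofs.
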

\begin{proof} The proof of the first affirmation is a consequence of the identities in \eqref{prop_prod_escal_vector} of the scalar product $\laa\cdot,\cdot\raa$ (we recall the identification \eqref{iden_espa} of $\R^{3,1}$ as a subset $\HC$). The second property is a consequence of the Dirac equation \eqref{1}; see  \cite[Proposition 4.1]{bayard1} and \cite[Lemma 2.3]{bayard_patty} for similar properties and detailed proofs.
\end{proof}

If we moreover assume that $M$ is simply connected, since $\xi:TM \to \R^{3,1}$ is a closed $1-$form   there exists a differentiable map $F:M \to \R^{3,1}$ such that $dF=\xi,$ that is $$F=\int\xi:M\to\R^{3,1}\hspace{0.2in}\mbox{where}\hspace{0.2in}\xi(X)=\laa X\cdot\varphi,\varphi\raa,$$ for all $X\in TM.$ The next theorem is fundamental: \\

\begin{thm}\label{teorema_isometria}
\textit{1-} The map $F:M\to \R^{3,1}$ is an isometric immersion. 

\noindent \textit{2-} The map $\Phi: E \longrightarrow M\times\R^{3,1}$ given by
\begin{equation*}
X\in E_m \longmapsto (F(m),\xi(X))
\end{equation*}
is an isometry between $E$ and the normal bundle $N(F(M))$ of $F(M)$ in $\R^{3,1},$ preserving connections and second fundamental form.
\end{thm}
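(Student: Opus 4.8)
The plan is to work in a local spinorial frame $\tilde{s}\in\tilde{Q}$ over $\pi(\tilde{s})=(e_1,e_2,e_3,e_4)$, write everything in coordinates in $\HC$, and reduce the statement to the Killing-type equation \eqref{kill} satisfied by $\varphi$ (Proposition \ref{lema_princ}). First I would compute $dF(e_j)=\xi(e_j)=\laa e_j\cdot\varphi,\varphi\raa$ for $j=1,2$ and show that this pair is orthonormal for $\la\cdot,\cdot\ra$ on $\R^{3,1}$ with the correct signs $\epsilon_j$; this uses the identity $H(\varphi,\varphi)=1$ together with the fact that $H$ vanishes on $\Sigma^{\pm}$ and the compatibility relations \eqref{prop_prod_escal_vector}, exactly as in \cite[Proposition 4.1]{bayard1} and \cite[Lemma 2.3]{bayard_patty}. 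Since $\xi$ is a $1$-form with values in the $4$-dimensional space $\R^{3,1}\subset\HC$ and $(e_1,e_2)$ maps to an orthonormal pair, $F$ is an immersion and an isometry onto its image, proving \textit{1-}.

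Next I would analyze the map $\Phi$. For $X\in E_m$, one checks first that $\xi(X)=\laa X\cdot\varphi,\varphi\raa$ is orthogonal in $\R^{3,1}$ to $\xi(e_1),\xi(e_2)$ and has the same norm as $X$: this is again a direct computation in $\HC$ using \eqref{prop_prod_escal_vector} and the vanishing of $H$ on $\Sigma^{\pm}$, together with the basic relation $\la \nu\cdot\varphi,\nu'\cdot\varphi\ra=\la\nu,\nu'\ra H(\varphi,\varphi)$ and $\la e_1\cdot e_2\cdot\varphi,\nu\cdot\nu'\cdot\varphi\ra=0$ already invoked in the proof of Proposition \ref{lema_princ}. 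Hence $\xi(E_m)$ is exactly the normal plane $N_{F(m)}(F(M))$ and $\Phi$ is a fiberwise isometry covering $F$.

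The remaining, and most delicate, point is that $\Phi$ preserves connections and the second fundamental form. Here I would differentiate the identity $\xi(X)=\laa X\cdot\varphi,\varphi\raa$ along a vector $Y\in TM$: using the Leibniz rule for $\laa\cdot,\cdot\raa$ with respect to $\nabla$ and then substituting the Killing equation \eqref{kill} for $\nabla_Y\varphi$, one expresses $\overline{\nabla}^{\R^{3,1}}_Y(\xi(X))$ in $\R^{3,1}\simeq\R^{1,1}\times\R^2$ and reads off its tangential and normal components. The tangential part should reproduce $\nabla^M_YX$ plus the term $-\langle B(X,Y),\cdot\rangle$-type shape operator contribution (giving the Weingarten formula), while the normal part should reproduce $\nabla^E_YX + B(Y,X)$; matching these gives simultaneously that $\Phi$ intertwines $\nabla^E$ with the normal connection and sends $B$ to the second fundamental form of $F(M)$. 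The bookkeeping of Clifford products $e_j\cdot B(Y,e_j)\cdot\varphi$ in coordinates—keeping track of the hats $\widehat{\cdot}$ coming from odd Clifford elements acting as $[Y]\widehat{[\varphi]}$—is the main obstacle; this is the step where signs and the $S^+/S^-$ splitting must be handled carefully. I expect this to parallel \cite[Theorem 4.2]{bayard1} and \cite{bayard_patty} closely, so after setting up the frame computation the verification is essentially a (careful) translation of those arguments to the signature $(3,1)$ timelike setting, and I would refer to those sources for the fully detailed computation.
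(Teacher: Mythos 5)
Your proposal matches the paper's approach exactly: the paper omits the proof and simply cites the analogous argument in \cite[Theorem 3]{bayard1}, which is precisely the coordinate computation in $\HC$ via the Killing equation that you outline. The sketch is correct (minor note: the citation should be to Theorem 3, not Theorem 4.2, of \cite{bayard1}, and the identity $\la\nu\cdot\varphi,\nu'\cdot\varphi\ra=\la\nu,\nu'\ra H(\varphi,\varphi)$ should be stated with the understanding that $H(\varphi,\varphi)=1$ is real so one may take real parts).
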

\begin{proof} The proof is consequence of the properties of the Clifford action and is analogous to that given in \cite[Theorem 3]{bayard1} and will therefore be omitted.
\end{proof}

As in \cite{bayard1,bayard_lawn_roth,bayard_patty}, Theorem \ref{teorema_isometria} gives a spinorial proof of the fundamental theorem of submanifolds (see Remark \ref{teo_fund_inmersion}). \\

\begin{coro}\label{tfs} We may integrate the Gauss, Ricci and Codazzi equations in two steps:
\begin{enumerate}
\item[1-] first solving 
\begin{equation*}
\nabla_X\varphi=\eta(X)\cdot\varphi
\end{equation*}
where
$$\eta(X)=-\frac{1}{2}\sum_{j=1,2}\epsilon_je_j\cdot B(X,e_j),$$
there exists a solution $\varphi$ in $\Gamma(\Sigma)$ such that $H(\varphi,\varphi)=1,$ unique up to the natural right-action of $Spin(3,1)$ on $\Gamma(\Sigma),$

\item[2-] then solving \begin{equation*} dF=\xi \hspace{0.2in}\mbox{where}\hspace{0.2in} \xi(X)=\laa X\cdot\varphi,\varphi\raa, \end{equation*} the solution is unique, up to translations of $\R^{3,1}\subset\HC.$
\end{enumerate}
\end{coro}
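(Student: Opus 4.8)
The plan is to assemble this corollary directly from the three results just established: Proposition \ref{lema_princ} (more precisely its converse direction, together with the integrability statement), the closedness of $\xi$, and Theorem \ref{teorema_isometria}. The corollary asserts that the Gauss--Ricci--Codazzi system can be integrated in two stages, so I would structure the proof around these two stages and, at the end, pin down the precise uniqueness statements.

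First I would treat step 1. Given the abstract data $(M, TM, E, B)$ with $B$ bilinear symmetric satisfying the Gauss, Ricci and Codazzi equations, I want a spinor field $\varphi \in \Gamma(\Sigma)$ with $\nabla_X\varphi = \eta(X)\cdot\varphi$, where $\eta(X) = -\frac{1}{2}\sum_{j=1,2}\epsilon_j e_j\cdot B(X,e_j)$, and with $H(\varphi,\varphi)=1$. This is a first-order linear PDE for $\varphi$; by the Frobenius theorem a solution with prescribed initial value at one point exists on a simply connected $M$ precisely when the connection $\nabla - \eta$ on $\Sigma$ is flat. The curvature of this connection is computed from the curvature of $\nabla$ (which involves $K$ and $K_N$ via the spinorial curvature formula) together with $d^{\nabla}\eta$ and $\eta\wedge\eta$; the identity that the curvature vanishes is exactly the content of "the Gauss, Ricci and Codazzi equations are the integrability condition of \eqref{kill}" invoked at the end of the proof of Proposition \ref{lema_princ}. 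So I would cite that computation. It remains to check that one may choose the initial value so that $H(\varphi,\varphi)=1$ and that this normalization is then preserved: since $\eta(X)$ is (the Clifford action of) an element of $\mathrm{Cl}_1(TM\oplus E)$ of the form $e_1\cdot\nu_1 + e_2\cdot\nu_2$, the relations \eqref{aplic_H_propiedad} give $\frac{d}{dt}H(\varphi,\varphi) = 2H(\nabla_X\varphi,\varphi) = 2H(\eta(X)\cdot\varphi,\varphi) = -2\overline{H(\varphi,\eta(X)\cdot\varphi)}$, and a short computation with $\eta(X)\in\mathrm{Cl}_1$ shows this is purely imaginary while the left side is real along an integral curve, forcing $H(\varphi,\varphi)$ to be constant; choosing the initial spinor in the appropriate $H$-sphere makes it $\equiv 1$. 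The uniqueness clause is then immediate: two solutions differ, at each point, by an element of $\mathrm{Spin}(3,1) = \{p\in\HC \mid H(p,p)=1\}$ acting on the right on $\Sigma$, and flatness forces this element to be constant.

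Step 2 is then a direct application of the material of Section \ref{weierstrass_repre}: given $\varphi$ from step 1 (which, having $H(\varphi,\varphi)=1$ and solving the Killing equation, a fortiori solves the Dirac equation $D\varphi = \vec H\cdot\varphi$ with $\vec H = \frac12\,\mathrm{tr}_{\langle,\rangle}B$ by tracing), the $1$-form $\xi(X) = \laa X\cdot\varphi,\varphi\raa$ is $\R^{3,1}$-valued and closed by Proposition (the unnumbered one preceding Theorem \ref{teorema_isometria}); since $M$ is simply connected there is $F$ with $dF = \xi$, unique up to an additive constant, i.e.\ up to translations of $\R^{3,1}\subset\HC$. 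By Theorem \ref{teorema_isometria}, $F$ is an isometric immersion with normal bundle isometric to $E$ and second fundamental form $B$, which closes the argument. I would also remark, as the paper does in Remark \ref{teo_fund_inmersion}, that this recovers the fundamental theorem of submanifolds for timelike surfaces in $\R^{3,1}$.

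The main obstacle — really the only nontrivial point — is the flatness-of-the-modified-connection computation in step 1, i.e.\ verifying that the Gauss, Ricci and Codazzi equations are equivalent to the vanishing of the curvature of $\nabla - \eta$ on $\Sigma$. This is a bookkeeping exercise in Clifford algebra identical in spirit to \cite[Theorem 2]{bayard1}, so rather than redo it I would invoke that reference and the analogous computation already cited inside the proof of Proposition \ref{lema_princ}. Everything else — the preservation of the normalization, the two uniqueness statements, and the passage from $\varphi$ to $F$ — follows formally from the structural facts already in hand.
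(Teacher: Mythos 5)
Your proposal follows essentially the route the paper intends. The paper itself offers no separate proof of Corollary~\ref{tfs}: it simply presents it as a consequence of Theorem~\ref{teorema_isometria} and the material of Section~\ref{weierstrass_repre}, deferring the flatness-of-$\nabla-\eta$ computation to \cite{bayard1}. Your two-stage structure --- integrability of the Killing-type equation from Gauss/Ricci/Codazzi giving existence and uniqueness of $\varphi$ up to a constant right factor in $Spin(3,1)$, then closedness of $\xi$ and Theorem~\ref{teorema_isometria} giving $F$ up to translation --- is exactly this.

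There is, however, one incorrect sub-step. You assert that $\eta(X)$ lies in $Cl_1(TM\oplus E)$ and deduce preservation of $H(\varphi,\varphi)=1$ from a ``purely imaginary versus real'' argument. But $\eta(X)=-\tfrac12\sum_j\epsilon_j e_j\cdot B(X,e_j)$ is a sum of Clifford products of a tangent vector with a normal vector, hence an \emph{even} element, $\eta(X)\in Cl_0(TM\oplus E)$, not $Cl_1$. Moreover, since $H$ is $\C$-valued, even knowing that $X\bigl(H(\varphi,\varphi)\bigr)$ were purely imaginary would not by itself force constancy. The correct and in fact simpler observation is this: for $A=u\cdot\nu$ with $u\in TM$, $\nu\in E$ orthogonal, applying the second identity of \eqref{aplic_H_propiedad} twice gives
\begin{equation*}
H(A\cdot\varphi,\varphi)=-\overline{H(\nu\cdot\varphi,u\cdot\varphi)}=H(\varphi,\nu\cdot u\cdot\varphi)=-H(\varphi,A\cdot\varphi)=-H(A\cdot\varphi,\varphi),
\end{equation*}
so $H(\eta(X)\cdot\varphi,\varphi)=0$ and hence $X\bigl(H(\varphi,\varphi)\bigr)=2H(\nabla_X\varphi,\varphi)=0$ for all $X$. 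With this repair the preservation of the normalization, and thus the whole of step~1, is sound, and the rest of your argument goes through as written.
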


Note that the multiplication on the right by a constant belonging to $Spin(3,1)$ in the first step, and the addition of a constant belonging to $\R^{3,1}$ in the second step, correspond to a rigid motion in $\R^{3,1}.$ Another consequence of Theorem \ref{teorema_isometria} is the following classical formula: \\

\begin{coro}\label{laplacian-immersion} The Laplacian of the isometric immersion $F:M \to \R^{3,1}$ is given by 
\begin{equation*}
\Delta F=2\vec{H}
\end{equation*} where $\vec{H}$ is the mean curvature vector of the immersion. 
\end{coro}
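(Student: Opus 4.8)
The plan is to compute $\Delta F$ directly from the representation formula $dF = \xi$, exploiting the fact that $F$ is an isometric immersion (Theorem \ref{teorema_isometria}), so that $\Delta F$ can be computed using an orthonormal frame $(e_1,e_2)$ of $TM$ adapted to the metric of signature $(1,1)$, with $\epsilon_j = \la e_j,e_j\ra$. Recall that for a map into the flat space $\R^{3,1}$, the Laplacian of $F$ along the surface satisfies $\Delta F = \sum_{j}\epsilon_j\big(\overline{\nabla}_{e_j}\overline{\nabla}_{e_j}F - \overline{\nabla}_{\nabla_{e_j}e_j}F\big)$; since $\overline{\nabla}_{e_j}F = e_j$ (the immersion being isometric, its differential sends $e_j$ to the corresponding vector field along $F$), this reduces to $\Delta F = \sum_j \epsilon_j\big(\overline{\nabla}_{e_j}e_j - \nabla_{e_j}e_j\big) = \sum_j \epsilon_j B(e_j,e_j) = \operatorname{tr}_{\la,\ra}B = 2\vec{H}$, by the very definition $\vec{H} = \frac{1}{2}\operatorname{tr}_{\la,\ra}B$ and the decomposition $\overline{\nabla}_XY = \nabla_XY + B(X,Y)$ from Section \ref{gauss_formula}.

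Concretely, first I would fix a point $m\in M$ and choose the orthonormal frame $(e_1,e_2)$ so that $\nabla_{e_j}e_j = 0$ at $m$ (a normal frame at a point), which kills the correction term in the Laplacian. Then I would write $\Delta F = \sum_{j=1,2}\epsilon_j\, e_j\big(e_j(F)\big)$ at $m$, interpreting $e_j(F)$ as the $\R^{3,1}$-valued function $dF(e_j) = \xi(e_j)$. Differentiating once more in the ambient (flat) connection of $\R^{3,1}$ and decomposing into tangential and normal parts via the Gauss formula, the tangential part is $\sum_j\epsilon_j\nabla_{e_j}e_j = 0$ at $m$ and the normal part is $\sum_j\epsilon_j B(e_j,e_j) = 2\vec{H}(m)$. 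Since $m$ was arbitrary, $\Delta F = 2\vec{H}$ everywhere.

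The only subtlety — and the main point requiring care rather than a genuine obstacle — is bookkeeping the signs coming from the Lorentzian signature: the Laplacian on a surface of signature $(1,1)$ is $\Delta = \sum_j \epsilon_j(\nabla_{e_j}\nabla_{e_j} - \nabla_{\nabla_{e_j}e_j})$ with the $\epsilon_j$ factors, and these are exactly the same $\epsilon_j$ that appear in $\operatorname{tr}_{\la,\ra}B = \sum_j\epsilon_j B(e_j,e_j)$, so they cancel consistently and the identity $\Delta F = 2\vec H$ holds with no extra sign. One should also note that this argument does not actually use the spinor field $\varphi$ beyond what is already encoded in Theorem \ref{teorema_isometria}: once we know $F$ is an isometric immersion with second fundamental form $B$ and mean curvature vector $\vec H$, the formula is the classical one (Beltrami's formula, $\Delta F = 2\vec H$), valid verbatim in the semi-Riemannian setting. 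Accordingly I would present the proof as a short deduction from Theorem \ref{teorema_isometria} and the Gauss decomposition, remarking that it recovers the well-known Euclidean formula in this Lorentzian context.
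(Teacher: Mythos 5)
Your proof is correct but takes a genuinely different route from the paper's. You argue classically: since Theorem \ref{teorema_isometria} guarantees that $F$ is an isometric immersion with second fundamental form $B$ and mean curvature vector $\vec{H}=\frac12\operatorname{tr}_{\la,\ra}B$, the Laplacian is computed via the ambient Gauss decomposition $\overline{\nabla}_XY=\nabla_XY+B(X,Y)$, giving the Beltrami formula $\Delta F=\operatorname{tr}_{\la,\ra}B=2\vec{H}$; the spinor field $\varphi$ is not touched again. The paper instead stays entirely inside the spinorial framework: it differentiates the representation formula $\xi(X)=\laa X\cdot\varphi,\varphi\raa$ twice, uses the properties \eqref{prop_prod_escal_vector} of $\laa\cdot,\cdot\raa$ to write $\nabla dF(e_i,e_i)=\laa e_i\cdot\nabla_{e_i}\varphi,\varphi\raa-\widehat{\overline{\laa e_i\cdot\nabla_{e_i}\varphi,\varphi\raa}}$, sums with signs to recognize the Dirac operator, and then invokes the Dirac equation $D\varphi=\vec H\cdot\varphi$ to conclude $\Delta F=2\laa\vec H\cdot\varphi,\varphi\raa=2\xi(\vec H)$. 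Your approach is shorter and emphasizes that the formula is the classical one once the isometric-immersion content of Theorem \ref{teorema_isometria} is accepted; the paper's approach is a consistency check of the spinorial representation formula itself, exhibiting directly how the Dirac equation encodes the Beltrami formula, which fits better the expository aim of the section. Your sign bookkeeping ($\epsilon_j$ in both the Laplacian and in $\operatorname{tr}_{\la,\ra}B$) is handled correctly and matches the paper's convention $\Delta F=-\nabla dF(e_1,e_1)+\nabla dF(e_2,e_2)$.
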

\begin{proof} Using the properties in \eqref{prop_prod_escal_vector} we get
\begin{equation*}
\nabla dF(e_i,e_i)=\nabla\xi(e_i,e_i)=\laa e_i\cdot \nabla_{e_i}\varphi,\varphi\raa - \widehat{\overline{\laa e_i\cdot \nabla_{e_i}\varphi,\varphi\raa}},
\end{equation*} thus, from Dirac equation \eqref{1} we obtain
\begin{equation*}
\Delta F =-\nabla dF(e_1,e_1)+\nabla dF(e_2,e_2) 
= \laa D\varphi,\varphi\raa - \widehat{\overline{\laa D\varphi,\varphi\raa}}=2\laa \vec{H}\cdot\varphi,\varphi\raa
\end{equation*} where $\xi(\vec{H})=\laa \vec{H}\cdot\varphi,\varphi\raa$ is the mean curvature vector of the immersion.
\end{proof}

Applications of the spinor representation formula in Sections \ref{app-1},\ref{app-2},\ref{app-3} and \ref{app-4} will rely on the following simple observation: assume that $F_0:M\to \R^{3,1}$ is an isometric immersion and consider $\varphi=\pm\e_{|M}$ the restriction to $M$ of the constant spinor field $+\e$ or $-\e\in\HC$ of $\R^{3,1};$ if \begin{equation}\label{spinor formula}
F=\int\xi,\hspace{0.3in} \xi(X)=\laa X\cdot\varphi,\varphi\raa
\end{equation} is the immersion given in the theorem, then $F\simeq F_0.$ This is in fact trivial since \begin{equation*}
\xi(X)=\laa X\cdot\varphi,\varphi\raa=\overline{[\varphi]}[X]\widehat{[\varphi]}=[X],
\end{equation*} in a spinorial frame $\tilde{s}$ of $\R^{3,1}$ which is above the canonical basis (in such a frame $[\varphi]=\pm \e$). The representation formula \eqref{spinor formula}, when written in moving frames adapted to the immersion, will give nontrivial formulas. 

\section{Timelike surfaces in the De Sitter space}\label{app-1}
In this section we deduce spinor characterizations of timelike surfaces in three-dimensional Minkowski space $\R^{2,1}$ and De Sitter space: in the first case, we recover the characterization given in \cite{bayard_patty}; in the second case, we obtain a new characterization which is different to the given in \cite{lawn_roth}. 

We suppose that $E=\R e_3\oplus \R e_4$ where $e_3$ and $e_4$ are unit, orthogonal and parallel sections of $E$ and such that $(e_3,e_4)$ is positively oriented. We consider the isometric embedding of $\R^{2,1}$ and the De Sitter space in $\R^{3,1}\subset \HC$ given by 
\begin{equation}\label{de sitter}
\R^{2,1}:=(K)^{\perp}\hspace{0.2in}\mbox{and}\hspace{0.2in}\mathbb{S}^{2,1}:=\left\lbrace x\in\R^{3,1} \mid \la x,x \ra=1 \right\rbrace,
\end{equation}
where $K$ is the fourth vector of the canonical basis of $\R^{3,1}\subset\HC.$ Let $\vec{H}$ be a section of $E$ and $\varphi\in\Gamma(\Sigma)$ be a solution of 
\begin{equation}\label{dirac sec 1}
D\varphi=\vec{H}\cdot\varphi,\hspace{0.3in} H(\varphi,\varphi)=1.
\end{equation}
According to Theorem \ref{principal}, the spinor field $\varphi$ defines an isometric immersion $M\to \R^{3,1}$ (unique, up to translations), with normal bundle $E$ and mean curvature vector $\vec{H}.$ We give a characterization of the isometric immersion in $\R^{2,1}$ and $\mathbb{S}^{2,1}$ (up to translations) in terms of $\varphi:$ \\
\begin{prop}\label{inm reduc}
\textit{1-} Assume that \begin{equation}\label{inm R21}
\vec{H}=He_3\hspace{0.3in}\text{and}\hspace{0.3in} e_4\cdot\varphi=\pm i\varphi. 
\end{equation}Then the isometric immersion $M\to \R^{3,1}$ belongs to $\R^{2,1}.$

\noindent \textit{2-} Consider the function $F=\laa e_4\cdot\varphi,\varphi\raa$ and assume that \begin{equation}\label{inm S21}
\vec{H}=He_3-e_4\hspace{0.3in}\text{and}\hspace{0.3in} dF(X)=\laa X\cdot\varphi,\varphi\raa. 
\end{equation}Then the isometric immersion $M\to \R^{3,1}$ belongs to $\mathbb{S}^{2,1}.$

Reciprocally, if $M\to \R^{3,1}$ belongs to $\R^{2,1}$ (resp. to $\mathbb{S}^{2,1}$), then  \eqref{inm R21} (resp. \eqref{inm S21}) holds for some unit, orthogonal and parallel sections $(e_3,e_4)$ of $E.$
\end{prop}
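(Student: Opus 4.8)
The plan is to recognize that, via the spinor representation formula, belonging to an affine hyperplane or to the pseudosphere is detected by a single scalar condition on the immersion, which in turn can be translated into an algebraic condition on $\varphi$ through the pairing $\laa\cdot,\cdot\raa$. I would begin by recalling the simple observation at the end of Section \ref{weierstrass_repre}: in a spinorial frame $\tilde s$ adapted to the immersion, $\xi(X)=\overline{[\varphi]}\,[X]\,\widehat{[\varphi]}$, so all geometric quantities can be computed in $\HC$. For part \textit{1-}, the immersion $F=\int\xi$ lies in $\R^{2,1}=(K)^\perp$ precisely when $\la \xi(X),K\ra\equiv 0$ for all $X\in TM$, i.e.\ when the $K$-component of $\laa X\cdot\varphi,\varphi\raa$ vanishes. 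I would express $\la\xi(X),K\ra$ using the properties \eqref{prop_prod_escal_vector} of $\laa\cdot,\cdot\raa$ and the identification of $e_4$ with $K$ in an adapted frame; the condition $e_4\cdot\varphi=\pm i\varphi$ forces this component to vanish, since $\la X\cdot\varphi, e_4\cdot\varphi\ra = \mp\, i\,\la X\cdot\varphi,\varphi\ra$ and the relevant pairing of $X\cdot\varphi$ with $\varphi$ is already controlled by the $H$-orthonormality relations established in the proof of Proposition \ref{lema_princ} (the spinors $\varphi$ and $e_1\cdot e_2\cdot\varphi$ are $H$-orthogonal, etc.). One then checks that the hypothesis $\vec H=He_3$ is exactly what is needed for the Dirac equation \eqref{dirac sec 1} to be compatible with the reduced normal bundle, so that $e_4$ stays parallel along the immersion and the condition $e_4\cdot\varphi=\pm i\varphi$ is preserved; this is the content of the Ricci/Codazzi equations in the reduced setting.

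For part \textit{2-}, the strategy is to show that $f:=\laa e_4\cdot\varphi,\varphi\raa$ is (up to normalization) the position vector of $F$ inside $\R^{3,1}$, so that $\la F,F\ra\equiv 1$. Concretely, I would differentiate $f$ using $\nabla_X(e_4\cdot\varphi)=e_4\cdot\nabla_X\varphi$ (valid because $e_4$ is parallel in $E$) together with the Killing-type equation \eqref{kill} and the Clifford relations; the hypothesis $\vec H=He_3-e_4$ is designed so that the $e_4$-term in $\vec H\cdot\varphi$ produces exactly the derivative $df(X)=\laa X\cdot\varphi,\varphi\raa=\xi(X)=dF(X)$. Hence $F=f+\text{const}$, and after absorbing the constant into the translation freedom of the immersion we get $F=f=\laa e_4\cdot\varphi,\varphi\raa$. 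It then remains to compute $\la F,F\ra=\la f,f\ra$; using the first property in \eqref{prop_prod_escal_vector} and $H(\varphi,\varphi)=1$ one finds $\la \laa e_4\cdot\varphi,\varphi\raa, \laa e_4\cdot\varphi,\varphi\raa\ra = H(e_4\cdot\varphi,e_4\cdot\varphi)\cdot H(\varphi,\varphi)$-type expressions which evaluate to $1$ because $e_4$ is a unit spacelike vector and $H(X\cdot\psi,X\cdot\psi)=\la X,X\ra H(\psi,\psi)$ up to conjugation. Thus $F$ takes values in $\mathbb{S}^{2,1}$.

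For the converse, I would run the argument backwards. If $F(M)\subset\R^{2,1}$, then $\R^{2,1}$ contains all tangent planes of $F$, so $K$ (the unit normal to the hyperplane) is a unit, spacelike, parallel normal section of $N(F(M))$; transporting it back through the isometry $\Phi$ of Theorem \ref{teorema_isometria} gives a unit parallel section $e_4$ of $E$, and choosing $e_3$ orthogonal to it with $(e_3,e_4)$ positively oriented one has $\vec H\perp e_4$, i.e.\ $\vec H=He_3$; the relation $e_4\cdot\varphi=\pm i\varphi$ then follows by reversing the computation of $\la\xi(X),e_4\ra=0$, since this pairing vanishing for all $X$ (plus $H$-orthonormality) pins down the eigenvalue of $e_4\cdot$ on $\varphi$. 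Similarly, if $F(M)\subset\mathbb{S}^{2,1}$ then the outward position vector $F$ itself is a unit spacelike normal field, which after pulling back yields the required $e_4$ with $\laa e_4\cdot\varphi,\varphi\raa=F$ and, differentiating, $\vec H=He_3-e_4$.

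The main obstacle I anticipate is the bookkeeping in the second part: correctly identifying which multiple of $e_4$ in $\vec H$ is forced by the requirement $df(X)=\xi(X)$, and carefully evaluating $\la\laa e_4\cdot\varphi,\varphi\raa,\laa e_4\cdot\varphi,\varphi\raa\ra$ so that the signs and the conjugations $\widehat{(\cdot)}$, $\overline{(\cdot)}$ in \eqref{prop_prod_escal_vector}--\eqref{aplic_H_propiedad} come out to give exactly $\la F,F\ra=1$ rather than $-1$ or some nonconstant quantity. This is where the Lorentzian signature really enters — one must use that $e_4$ is spacelike (so $\la e_4,e_4\ra=+1$) and that the complex structure $i=-e_1\cdot e_2\cdot e_3\cdot e_4$ interacts with $e_4\cdot$ in the expected way. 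The analogous $\R^{2,1}$ statement in \cite{bayard_patty} and the De Sitter computation there should serve as a template, but the ambient dimension is one higher here, so the reduction argument (why the remaining normal direction $e_3$ suffices) needs the Ricci equation to guarantee $e_4$ remains parallel.
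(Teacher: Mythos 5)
Your overall strategy is in the right neighborhood, but the execution diverges from the paper's (shorter) argument and has genuine gaps in both parts.

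For part \textit{1-}, the paper does not verify $\la\xi(X),K\ra=0$ for tangent $X$; instead it computes directly that the \emph{normal} component $\xi(e_4)=\laa e_4\cdot\varphi,\varphi\raa$ equals $\pm K$ in the adapted frame (using $e_4\cdot\varphi=\pm i\varphi$ and the fact that the Clifford action of the complexified volume element $i$ corresponds to right multiplication by $K$ on $\HC$). By Theorem \ref{teorema_isometria}, $\xi(e_4)$ is the image of $e_4$ under the bundle isometry $\Phi:E\to N(F(M))$, so a constant vector $K$ is everywhere normal to $F(M)$, forcing $F(M)\subset(K)^\perp=\R^{2,1}$ up to translation. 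Your proposed route via the tangential pairing could also be made to work, but the specific identity you write, $\la X\cdot\varphi,e_4\cdot\varphi\ra=\mp i\,\la X\cdot\varphi,\varphi\ra$, cannot hold as stated: $\la\cdot,\cdot\ra$ is the \emph{real} scalar product (the real part of $H$), so multiplying it by $i$ is not well-formed. You would have to work with $H$ or with the $\HC$-valued pairing $\laa\cdot,\cdot\raa$ and extract the $K$-coefficient carefully, which is considerably more bookkeeping than the paper's one-line computation.

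For part \textit{2-}, you have misread the hypothesis: the identity $dF(X)=\laa X\cdot\varphi,\varphi\raa$ is assumed in \eqref{inm S21}, not something to derive from $\vec H=He_3-e_4$. Trying to obtain it by differentiating $f=\laa e_4\cdot\varphi,\varphi\raa$ and using only the $e_4$-component of $\vec H$ will not succeed, because $\nabla_X\varphi=\eta(X)\cdot\varphi$ involves the full second fundamental form $B$, not just its trace $\vec H$; the two conditions in \eqref{inm S21} are genuinely independent, as the equivalence \eqref{norm deri} in Proposition \ref{ident spinors} makes explicit. Once $dF=\xi$ is taken as given, the paper's argument is immediate: $F=\laa e_4\cdot\varphi,\varphi\raa$ is a primitive of $\xi$ and hence \emph{is} the isometric immersion up to translation; and its Minkowski norm equals $|e_4|^2=1$ because $\Phi$ is a bundle isometry (Theorem \ref{teorema_isometria}), so $F(M)\subset\mathbb{S}^{2,1}$. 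Your appeal to ``$H(e_4\cdot\varphi,e_4\cdot\varphi)\cdot H(\varphi,\varphi)$-type expressions'' conflates the $H$-norm of the spinor $e_4\cdot\varphi$ with the Minkowski norm of the vector $\xi(e_4)\in\R^{3,1}\subset\HC$; the latter is what must be computed, and Theorem \ref{teorema_isometria} handles it without any spinor computation.

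Your outline of the converse is essentially correct and matches the paper: choose $e_4$ so that $\xi(e_4)$ is $\pm K$ (resp.\ the outward unit normal to $\mathbb{S}^{2,1}$), then read off \eqref{inm R21} (resp.\ \eqref{inm S21}) in a frame. However, invoking the Ricci and Codazzi equations to ``guarantee $e_4$ remains parallel'' is unnecessary: $(e_3,e_4)$ are chosen to be unit, orthogonal and parallel sections of $E$ by hypothesis, and the bundle isometry $\Phi$ of Theorem \ref{teorema_isometria} preserves connections, so nothing further needs to be checked.
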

\begin{proof}
\textit{1-} We suppose that \eqref{inm R21} holds, and we compute 
\begin{equation*}
\xi(e_4)=\laa e_4\cdot\varphi,\varphi\raa=\pm \laa i\varphi,\varphi\raa=\pm \overline{[\varphi]}([\varphi]K)=\pm K.
\end{equation*}
The constant vector $K$ is thus normal to the immersion (by Theorem \ref{teorema_isometria}, since this is $\xi(e_4)$), and the result follows. \\
\textit{2-} Analogously, assuming that \eqref{inm S21} holds, the function $F=\laa e_4\cdot\varphi,\varphi\raa$ is a primitive of the $1-$form $\xi(X)=\laa X\cdot\varphi,\varphi\raa,$ and is thus the isometric immersion defined by $\varphi$ (uniquely defined, up to translations); since the Minkowski norm of $\laa e_4\cdot\varphi,\varphi\raa\in\R^{3,1}\subset \HC$ coincides with the norm of $e_4,$ and is thus constant equal to $1,$ the immersion belongs to $\mathbb{S}^{2,1}.$

For the converse statements, we choose $(e_3,e_4)$ such that $\laa e_4\cdot\varphi,\varphi\raa=\pm K$ in the first case and such that $\laa e_4\cdot\varphi,\varphi\raa$ is the normal vector to $\mathbb{S}^{2,1}$ in $\R^{3,1}$ in the second case. Writing these identities in some frame $\tilde{s},$ we easily deduce \eqref{inm R21} and \eqref{inm S21}.
\end{proof}
We now assume that $M\subset \mathcal{H}\subset\R^{3,1},$ where $\mathcal{H}$ is $\R^{2,1}$ or $\mathbb{S}^{2,1},$ and consider $e_3$ and $e_4$ unit vector fields such that 
\begin{equation*}
\R^{3,1}=T\mathcal{H}\oplus_{\perp}\R e_4 \hspace{0.2in}\mbox{and}\hspace{0.2in} T\mathcal{H}=TM\oplus_{\perp} \R e_3.
\end{equation*} 
The intrinsic spinors of $M$ indentify with the spinors of $\mathcal{H}$ restricted to $M,$ which in turn identify with the positive spinors of $\R^{3,1}$ restricted to $M:$ this is the content of Proposition \ref{ident spinors} below, which, together with the previous result, will give the representation of timelike surfaces in $\R^{2,1}$ and $\mathbb{S}^{2,1}$ by means of spinors of $\Sigma M$ only.

We define the scalar product on $\C^2$ by setting
\begin{equation*}
\left\la \begin{pmatrix}
a+ib \\ c+id
\end{pmatrix},\begin{pmatrix}
a'+ib' \\ c'+id'
\end{pmatrix} \right\ra:=\frac{ad'+a'd-bc'-b'c}{2},
\end{equation*} of signature $(2,2).$ This scalar product is $Spin(1,1)$-invariant, thus induces a scalar product $\la\cdot,\cdot\ra$ on the spinor bundle $\Sigma M.$ It satisfies the following properties:
\begin{equation*}\label{prop pp}
\la\psi,\psi'\ra=\la\psi',\psi\ra\hspace{0.2in}\text{and}\hspace{0.2in}\la X\cdot_M\psi,\psi'\ra=-\la\psi,X\cdot_M\psi'\ra, 
\end{equation*} for all $\psi,\psi'\in \Sigma M$ and all $X\in TM.$ This is the scalar product on $\Sigma M$ that we use in this section (and in this section only). We moreover define $|\psi|^2:=\la\psi,\psi\ra$ and, we denote by $i$ the natural complex structure of $\Sigma M,$ which is such that the Clifford action is $\C-$linear. The following proposition is analogous to the given in \cite{bayard1,bayard_lawn_roth,bayard_patty} (see also \cite[Proposition 2.1]{morel}, and  the references therein). \\

\begin{prop}\label{ident spinors}
There exists an identification 
\begin{align*}
\Sigma M & \overset{\sim}{\longmapsto}  \Sigma^+_{|M}\\
\psi & \longmapsto  \psi^*
\end{align*}
such that, for all $X\in TM$ and all $\psi\in\Sigma M,$ $(\nabla_X\psi)^*=\nabla_X\psi^*,$ the Clifford actions are linked by 
\begin{equation*}\label{rela clifford}
(X\cdot_M\psi)^*=X\cdot e_3\cdot\psi^* \end{equation*} and the following two properties holds: 
\begin{equation}\label{norma R21}
H(\psi^*,i e_4\cdot \psi^*)=-\frac{1}{2} |\psi|^2, 
\end{equation} and \begin{equation}\label{norm deri}
d\laa e_4\cdot \psi^*,\psi^*\raa(X)=\laa X\cdot\psi^*,\psi^*\raa \hspace{0.2in}\mbox{iff}\hspace{0.2in} 
d \left( |\psi|^2\right)(X)=\la i(X\cdot_M\overline{\psi}),\psi\ra.
\end{equation}
\end{prop}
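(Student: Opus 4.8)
The plan is to construct the identification $\Sigma M \overset{\sim}{\to} \Sigma^+_{|M}$ explicitly using the concrete model for spinors developed in Section \ref{preliminares}, and then to verify the compatibility statements by direct computation in a spinorial frame. First I would recall that, over $M$, the decomposition $\R^{3,1} = TM \oplus \R e_3 \oplus \R e_4$ gives an embedding $Spin(1,1) \hookrightarrow Spin(1,1)\times Spin(2) \subset Spin(3,1)$, so that $\Sigma = \Sigma M \otimes \Sigma E$ with $\Sigma E$ of rank $1$ (a line bundle with a global parallel section coming from the parallel frame $(e_3,e_4)$). Concretely, in the model $\HC = S^+ \oplus S^-$ with $S^\pm = (\C \oplus \C J)(\e \mp iK)$, the Clifford action of $e_4$ (represented by $K$) acts on $S^+$; one checks that $e_4 \cdot$ has eigenvalues $\pm i$ on $S^+$ and splits $S^+$ into two complex lines. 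Following the pattern of \cite{morel} and \cite[Prop.~2.1]{bayard1,bayard_lawn_roth,bayard_patty}, I would identify $\Sigma M$ with the $+i$-eigenspace of $e_4\cdot$ inside $\Sigma^+$ (equivalently, impose $e_4 \cdot \psi^* = i\psi^*$), and define $\psi \mapsto \psi^*$ as the resulting isometric isomorphism onto that sub-line-bundle, extended to the full $\Sigma^+_{|M}$ by noting that $\Sigma^+_{|M}$ is a $\Sigma E$-module and $\Sigma E$ is trivialized by $(e_3,e_4)$. The parallelism $(\nabla_X\psi)^* = \nabla_X\psi^*$ is then immediate since $e_3, e_4$ are parallel sections of $E$, so the twisted connection $\nabla = \nabla^M \otimes \nabla^E$ restricts to $\nabla^M$ on the image.

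Next I would establish the Clifford-action formula $(X\cdot_M\psi)^* = X\cdot e_3\cdot\psi^*$. This is the defining compatibility of the identification: under $Cl(TM\oplus E) \simeq Cl(TM)\,\widehat{\otimes}\,Cl(E)$, the intrinsic Clifford multiplication $X\cdot_M$ on $\Sigma M$ corresponds, on the twisted bundle, to $X \cdot$ followed by the volume element of $E$, i.e. by $e_3\cdot e_4\cdot$; combined with $e_4 \cdot \psi^* = i\psi^*$ this gives $X\cdot e_3\cdot e_4\cdot \psi^* = i\, X\cdot e_3\cdot\psi^*$, and one adjusts the normalization of the map $\psi\mapsto\psi^*$ (by a factor linear in the complex structure) so that the stated formula holds on the nose. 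I would verify this in the explicit frame $\tilde{s}$ with $\pi(\tilde s) = (e_1,e_2,e_3,e_4)$ where $e_1,e_2,e_3,e_4$ are represented by $i\e, I, J, K$, using the rules $[X\cdot\varphi] = [X][\varphi]$ and $[Y\cdot\varphi]=[Y]\widehat{[\varphi]}$ from Section \ref{twisted sec}, and comparing with the $\C^2$-model of $\Sigma M$ whose $Spin(1,1)$-invariant pairing is the one displayed just before the proposition.

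For \eqref{norma R21}, I would compute $H(\psi^*, i e_4\cdot\psi^*)$ directly: since $e_4\cdot\psi^* = \pm i\psi^*$, we get $i e_4 \cdot \psi^* = \mp \psi^*$, so $H(\psi^*, ie_4\cdot\psi^*) = \mp H(\psi^*,\psi^*)$; but $H$ vanishes on $S^+$, so this naive computation gives $0$ — meaning I must instead pair across the two eigenlines or, more carefully, use the $\HC$-valued pairing $\laa\cdot,\cdot\raa$ and extract the right component. The correct route is to write $\laa e_4\cdot\psi^*, \psi^*\raa = \overline{[\psi^*]}\,K\,\widehat{[\psi^*]} \in \R^{3,1}$, expand $[\psi^*]$ in the basis $\e,I,J,K$ using the explicit form of $S^+$, and match the $\e$-coefficient (which is $H$ by definition) against $|\psi|^2 = \la\psi,\psi\ra$ computed in the $\C^2$-model; the factor $-\tfrac12$ comes from the normalization $\tfrac{ad'+a'd-bc'-b'c}{2}$ of the $\C^2$ scalar product. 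Finally \eqref{norm deri} follows by applying $d$ to the identity $\laa e_4\cdot\psi^*,\psi^*\raa$ in terms of $|\psi|^2$ (i.e.\ differentiating the relation just established, pointwise a multiple of \eqref{norma R21}) together with the Clifford-action and reality formulas, reducing the equivalence of the two differential identities to the algebraic dictionary between $\laa\cdot,\cdot\raa$ on $\Sigma^+_{|M}$ and $\la\cdot,\cdot\ra$ on $\Sigma M$.

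The main obstacle I anticipate is bookkeeping the normalization constants: getting the precise factor $-\tfrac12$ in \eqref{norma R21} and the precise form $\la i(X\cdot_M\overline\psi),\psi\ra$ in \eqref{norm deri} requires tracking how the half-integer normalization of the $\C^2$ pairing interacts with the real-part operation $\la\cdot,\cdot\ra = \Re e\,H(\cdot,\cdot)$ on $\HC$ and with the conjugations $\widehat{(\cdot)}$, $\overline{(\cdot)}$; a sign or factor-of-two error anywhere propagates. Everything else is a routine, if lengthy, computation in the fixed spinorial frame, closely paralleling \cite[Prop.~2.1]{morel} and the analogous propositions in \cite{bayard1,bayard_lawn_roth,bayard_patty}.
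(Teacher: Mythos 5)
Your construction of the map $\psi\mapsto\psi^*$ is built on a claim that is false: you assert that the Clifford action of $e_4$ ``has eigenvalues $\pm i$ on $S^+$ and splits $S^+$ into two complex lines,'' and you then take $\psi^*$ in the $+i$-eigenline, i.e.\ you impose $e_4\cdot\psi^*=i\psi^*$ with $\psi^*\in\Sigma^+$. But $e_4$ is an odd element of $Cl(TM\oplus E)$, so its Clifford action interchanges $\Sigma^+$ and $\Sigma^-$: in coordinates $[e_4\cdot\varphi]=K\,\widehat{[\varphi]}$, and conjugation together with left multiplication by $K$ sends $S^+=(\C\oplus\C J)(\e-iK)$ into $S^-=(\C\oplus\C J)(\e+iK)$. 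Hence $e_4\cdot\psi^*\in\Sigma^-$ while $i\psi^*\in\Sigma^+$, and the equation $e_4\cdot\psi^*=i\psi^*$ has only the zero solution in $\Sigma^+$; there is no eigenline. You half-notice this when you remark that your naive computation of $H(\psi^*,ie_4\cdot\psi^*)$ gives $0$, but you misdiagnose the cause: the issue is not only that $H$ vanishes on $\Sigma^+$, it is that the ansatz $e_4\cdot\psi^*=\pm i\psi^*$ is already vacuous. (Compare Proposition \ref{inm reduc}(1), where $e_4\cdot\varphi=\pm i\varphi$ is imposed on a \emph{full} spinor $\varphi=\varphi^++\varphi^-$; there it is a cross-relation $e_4\cdot\varphi^\pm=\pm i\varphi^\mp$, not an eigenvalue condition within a half-bundle.) The downstream step where you write $(X\cdot_M\psi)^*=X\cdot e_3\cdot e_4\cdot\psi^*$ ``combined with $e_4\cdot\psi^*=i\psi^*$'' inherits the same problem.

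The intended construction, as the paper indicates by citing \cite{morel,bayard1,bayard_lawn_roth,bayard_patty} and by the sentence preceding the statement, is the composition of two hypersurface-spinor identifications: $\Sigma M\simeq\Sigma\mathcal{H}_{|M}$ (with intrinsic Clifford action corresponding to $X\cdot_{\mathcal{H}}e_3\cdot_{\mathcal{H}}$) followed by $\Sigma\mathcal{H}_{|M}\simeq\Sigma^+\R^{3,1}_{|M}$ (with $X\cdot_{\mathcal{H}}$ corresponding to $X\cdot e_4\cdot$, an \emph{even} operator that does preserve $\Sigma^+$). Chaining these gives $(X\cdot_M\psi)^*=X\cdot e_3\cdot\psi^*$ up to sign conventions, and then \eqref{norma R21}--\eqref{norm deri} are checked in a fixed frame exactly as you outline in your last two paragraphs. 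Those closing computations look like they would carry through once the identification is set up correctly; the gap is entirely in the first step, and it is not a normalization to be adjusted away but a structural error ($e_4\cdot$ is off-diagonal with respect to the grading, $e_3\cdot e_4\cdot$ and $X\cdot e_4\cdot$ are the operators that are diagonal).
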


Using this identification, the intrinsic Dirac operator on $M,$ defined by \begin{equation*}D_M\psi:=-e_1\cdot_M\nabla_{e_1}\psi+e_2\cdot_M\nabla_{e_2}\psi, \end{equation*} where $(e_1,e_2)$ is an orthogonal basis tangent to $M$ such that $|e_1|^2=-1$ and $|e_2|^2=1,$ is linked to $D$ by 
\begin{equation}\label{relac dirac0}
(D_M \psi)^*=-e_3\cdot D\psi^*
\end{equation}
We suppose that $\varphi\in\Gamma(\Sigma)$ is a solution of equation \eqref{dirac sec 1}, we may consider $\psi\in \Sigma M$ such that $\psi^*=\varphi^+;$ it satisfies 
\begin{equation}\label{relac dirac}
(D_M \psi)^*=-e_3\cdot D\psi^*=-e_3\cdot\vec{H}\cdot\psi^*.
\end{equation}
Note that $\psi\neq 0,$ since 
\begin{equation}\label{norma rela}
H(\varphi,\varphi)=2H(\varphi^+,\varphi^-)=1,
\end{equation} where the descomposition $\varphi=\varphi^++\varphi^-$ is the descomposition in $\Sigma=\Sigma^+\oplus\Sigma^-,$ and recall that $H$ vanishes on $\Sigma^+$ and $\Sigma^-;$ see Section \ref{twisted sec}. 

We consider the case of a timelike surface in $\R^{2,1},$ \textit{i.e.} $\mathcal{H}=\R^{2,1}.$ Then, $\vec{H}$ is of the form $He_3$ and \eqref{relac dirac} reads \begin{equation*}
D_M\psi=H\psi;
\end{equation*} moreover, \eqref{norma rela}, \eqref{inm R21} and \eqref{norma R21} imply that $|\psi|^2=\pm 1.$ This is the spinorial characterization of an isometric immersion in $\R^{2,1}$ given in \cite{bayard_patty}.

Now, we examine the case of a timelike surface in $\mathbb{S}^{2,1}.$  If $\mathcal{H}=\mathbb{S}^{2,1},$ then $\vec{H}$ is of the form $He_3-e_4,$ and using \eqref{relac dirac} we get 
\begin{equation*}
(D_M \psi)^*=-e_3\cdot\vec{H}\cdot\psi^*=-e_3\cdot (He_3-e_4)\cdot\psi^*=H\psi^*+e_3\cdot e_4\cdot \psi^*=H\psi^*-(i\overline{\psi})^*,
\end{equation*} where $\overline{\psi}=\psi^+-\psi^-$ denotes the usual conjugation in $\Sigma M.$ 
Moreover, it is not difficult to prove that \eqref{inm S21} implies that \eqref{norm deri} holds. We thus get 
\begin{equation}\label{rep S21}
D_M\psi=H\psi-i\overline{\psi}\hspace{0.2in}\mbox{and}\hspace{0.2in} d \left( |\psi|^2\right)(X)=\la i(X\cdot_M\overline{\psi}),\psi\ra.
\end{equation}
Reciprocally, let $M$ be a timelike surface and $H:M\to\R$ a given differentiable function, and suppose that $\psi\in\Gamma(\Sigma M)$ satisfies \eqref{rep S21}. We define $\varphi^+:=\psi^*\in\Sigma^+$ and $\vec{H}:=He_3-e_4,$ where $e_3$ and $e_4$ are unit, orthogonal and parallel sections of $E,$ and such that $(e_3,e_4)$ is positively oriented. Using \eqref{rep S21}, \eqref{relac dirac0} and \eqref{norm deri} we obtain
\begin{equation*}
D\varphi^+=\vec{H}\cdot\varphi^+ \hspace{0.2in}\mbox{and}\hspace{0.2in} d\laa e_4\cdot \psi^*,\psi^*\raa(X)=\laa X\cdot\psi^*,\psi^*\raa.
\end{equation*}
\begin{prop} Let $\psi\in\Gamma(\Sigma M)$ be a solution of \eqref{rep S21}. There exists a spinor field $\varphi\in\Gamma(\Sigma)$ solution of 
\begin{equation*}
D\varphi=\vec{H}\cdot\varphi \hspace{0.2in}\mbox{and}\hspace{0.2in} H(\varphi,\varphi)=1,
\end{equation*} with $\varphi^+=\psi^*$ and such that the immersion defined by $\varphi$ is given by $F=\laa e_4\cdot\varphi,\varphi\raa.$ In particular $F(M)$ belongs to $\mathbb{S}^{2,1}.$
\end{prop}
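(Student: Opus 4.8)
The plan is to run the construction of $\varphi$ from its positive part $\psi^*$ and to verify the two hypotheses of Theorem \ref{principal} together with the normalization $F=\laa e_4\cdot\varphi,\varphi\raa$. First I would set $\varphi^+:=\psi^*\in\Gamma(\Sigma^+)$ and $\vec H:=He_3-e_4$, exactly as in the discussion preceding the statement. The content is to produce the missing negative part $\varphi^-\in\Gamma(\Sigma^-)$. Here I would use the identification of Section \ref{twisted sec}: in a local spinorial frame $\tilde s$, the bilinear form $H$ pairs $\Sigma^+$ with $\Sigma^-$ nondegenerately (it vanishes on each separately), so the condition $H(\varphi,\varphi)=2H(\varphi^+,\varphi^-)=1$ together with $e_4\cdot\varphi^-$ being forced by the Dirac equation determines $\varphi^-$ pointwise as an algebraic function of $\varphi^+$; explicitly one takes the unique $\varphi^-$ with $e_4\cdot\varphi^+ = $ (the appropriate multiple of) $\varphi^-$ dictated by writing out $\varphi=\psi^*+\varphi^-$ in coordinates on $\HC$, using that $e_3,e_4$ act as $J,K$. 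This makes $\varphi^-$ globally well-defined since $e_3,e_4$ are global parallel sections of $E$.

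Next I would check that $\varphi=\varphi^++\varphi^-$ solves $D\varphi=\vec H\cdot\varphi$ with $H(\varphi,\varphi)=1$. The normalization $H(\varphi,\varphi)=1$ holds by the construction of $\varphi^-$ (this is precisely \eqref{norma rela}). For the Dirac equation, I would use \eqref{relac dirac0}, i.e. $(D_M\psi)^* = -e_3\cdot D\psi^*$: the hypothesis $D_M\psi = H\psi - i\overline\psi$ translates, via the identity $-e_3\cdot e_4\cdot\psi^* = (i\overline\psi)^*$ already noted in the text, into $D\varphi^+ = (He_3-e_4)\cdot\varphi^+ = \vec H\cdot\varphi^+$. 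Since $\vec H\in\Gamma(E)$ swaps $\Sigma^+$ and $\Sigma^-$ and $D$ also swaps them, the equation $D\varphi=\vec H\cdot\varphi$ splits into its $\Sigma^+$ and $\Sigma^-$ components; the $\Sigma^-$ component asserts $D\varphi^- = \vec H\cdot\varphi^-$, which I would derive by applying $D$ and $\vec H\cdot$ to the algebraic relation defining $\varphi^-$ in terms of $\varphi^+$, using that $e_3,e_4$ are parallel so that $D$ commutes with Clifford multiplication by $e_4$ up to the already-known relations. Then Theorem \ref{principal} yields an isometric immersion $F:M\to\R^{3,1}$ with normal bundle $E$ and mean curvature vector $\vec H$.

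Finally I would identify the immersion. By Corollary \ref{tfs}(2) the immersion is $F=\int\xi$, $\xi(X)=\laa X\cdot\varphi,\varphi\raa$, unique up to translation; so it suffices to show $G:=\laa e_4\cdot\varphi,\varphi\raa$ is a primitive of $\xi$, i.e. $dG(X)=\laa X\cdot\varphi,\varphi\raa$ for all $X\in TM$. This is exactly the second identity in \eqref{inm S21}, and by the "iff" in \eqref{norm deri} of Proposition \ref{ident spinors} it is equivalent to $d(|\psi|^2)(X)=\la i(X\cdot_M\overline\psi),\psi\ra$, which is the second equation of the hypothesis \eqref{rep S21}. Hence $F=G=\laa e_4\cdot\varphi,\varphi\raa$ up to a constant, which we absorb, and by Proposition \ref{inm reduc}(2) — whose hypotheses \eqref{inm S21} we have just verified — the image $F(M)$ lies in $\mathbb{S}^{2,1}$.

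The main obstacle I expect is the bookkeeping in the first paragraph: constructing $\varphi^-$ from $\varphi^+$ cleanly and frame-independently, and checking that the $\Sigma^-$-component of the Dirac equation comes for free from the $\Sigma^+$-component. One must be careful that the relation between $\varphi^+$ and $\varphi^-$ is the one forced by $H(\varphi,\varphi)=1$ and by $e_4\cdot\varphi=\mp i\varphi$ being incompatible (here $\vec H$ has a nonzero $e_4$-part, unlike the $\R^{2,1}$ case), so $\varphi^-$ is genuinely nonzero and the argument must track the Clifford relations $i e_3\cdot e_4 = -e_1\cdot e_2$ etc. carefully; once the algebra on $\HC$ is written out in a frame this is mechanical, but it is the only place where real care is needed.
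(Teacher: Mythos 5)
Your first paragraph contains the essential gap. You claim that $\varphi^-$ is determined ``pointwise as an algebraic function of $\varphi^+$,'' pinned down by $H(\varphi^+,\varphi^-)=\tfrac{1}{2}$ together with an algebraic relation such as $\varphi^-\propto e_4\cdot\varphi^+$. This cannot be right. An algebraic relation $e_4\cdot\varphi=\pm i\varphi$ is precisely what characterizes the $\R^{2,1}$ case (Proposition~\ref{inm reduc}, part~1); for $\mathbb{S}^{2,1}$ the relevant condition \eqref{inm S21} is \emph{differential}, not algebraic, and there is no frame in which $\varphi^-$ is a pointwise multiple of $e_4\cdot\varphi^+$. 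Moreover, $H(\varphi^+,\varphi^-)=\tfrac{1}{2}$ is one complex scalar equation on the two-complex-dimensional fiber $\Sigma^-$, so even after imposing it one still has a one-parameter family of candidates at each point; nothing algebraic then forces the Dirac equation $D\varphi^-=\vec H\cdot\varphi^-$, which is first order. Your sketch of deriving the $\Sigma^-$-half of the Dirac equation ``by applying $D$ and $\vec H\cdot$ to the algebraic relation'' therefore has nothing to apply it to.

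The paper's actual proof is of a different kind. It writes $\varphi^-=-2\,e_4\cdot(\varphi^+\bullet F_1)$ for an auxiliary map $F_1:M\to\HC$ and reduces the whole problem to the first-order linear system
$\varphi^+\bullet dF_1(X)=-\omega(X)\cdot(\varphi^+\bullet F_1)$ with $\omega(X)=X\cdot e_4$, subject to the pointwise normalization $2[\varphi^+\bullet F_1]\,\widehat{\overline{[\varphi^+]}}=K$. The substance of the proof is then checking the \emph{integrability} (Frobenius) condition \eqref{comp},
$[\omega(X),\omega(Y)]=[\omega(X),\eta(Y)]-[\omega(Y),\eta(X)]$, which is what guarantees that $F_1$, hence $\varphi^-$, exists globally on the simply connected $M$. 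This is a genuinely analytic step that your outline skips. Relatedly, your third paragraph only invokes \eqref{norm deri}, which governs $d\laa e_4\cdot\varphi^+,\varphi^+\raa$ alone; the desired identity $dF(X)=\laa X\cdot\varphi,\varphi\raa$ for $F=\laa e_4\cdot\varphi,\varphi\raa$ also involves the $\varphi^-$ contributions, and it is exactly the system imposed on $F_1$ in the paper that supplies the missing half. (Also a small bookkeeping slip: $D$ and $\vec H\cdot$ both reverse parity, so $D\varphi^-=\vec H\cdot\varphi^-$ is the $\Sigma^+$-component of the equation, not the $\Sigma^-$-component.)
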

\begin{proof} We need to find $\varphi^-$ solution of the system 
\begin{eqnarray*}
F_1 =\laa e_4\cdot\varphi^-,\varphi^-\raa \\
dF_1(X) =\laa X\cdot\varphi^-,\varphi^- \raa
\end{eqnarray*} with $[\varphi^+]\overline{[\varphi^-]}=\frac{1}{2};$ this system is equivalent to \begin{equation*}
\varphi^-=-2e_4\cdot (\varphi^+\bullet F_1),
\end{equation*} with $2[\varphi^+\bullet F_1]\widehat{\overline{[\varphi^+]}}=K,$ where $F_1:M\to \HC$ solves the equation \begin{equation}\label{ecuac exist}
\varphi^+\bullet dF_1(X)=-\omega(X)\cdot (\varphi^+\bullet F_1), 
\end{equation} where $\omega(X)=X\cdot e_4,$ for all $X\in TM.$ Above, $\bullet$ means the natural action of $\HC$ on $\Sigma$ on the right given in coordinates by $[\varphi \bullet q]=[\varphi]q.$ The compatibility equation of \eqref{ecuac exist} is given by
\begin{equation}\label{comp}
[\omega(X),\omega(Y)]=[\omega(X),\eta(Y)]-[\omega(Y),\eta(X)],
\end{equation} 
where $\eta$ is such that $\nabla_X\varphi^+=\eta(X)\cdot\varphi^+,$ and where $[p,p']=pp'-p'p,$ for all $p,p'\in Cl_0(3,1);$ by a direct computation \eqref{comp} is satisfied, and thus \eqref{ecuac exist} is solvable.
\end{proof}

\begin{remar}\label{S21}
A solution of \eqref{rep S21} is thus equivalent to an isometric immersion in three-dimensional De Sitter space $\mathbb{S}^{2,1}.$ We thus obtain a spinorial characterization of an isometric immersion of a timelike surface in $\mathbb{S}^{2,1},$ which is simpler than the characterization given in \cite{lawn_roth}, where two spinor fields are needed. 
\end{remar}

\section{The Laplacian of the Gauss map of a timelike surface in $\R^{3,1}$}\label{app-2}

The main goal of this section is to compute the Laplacian of the Gauss map of a timelike surface in $\R^{3,1}.$ 

\subsection{The Grassmannian of the timelike planes in $\R^{3,1}$}\label{gauss section}
The Grassmannian of the oriented timelike planes in $\R^{3,1}$ identifies to 
\begin{equation*}
\Q=\left\lbrace u_1\cdot u_2 \mid u_1,u_2\in\R^{3,1}, |u_1|^2=-|u_2|^2=-1 \right\rbrace \  \subset \ Cl_0(3,1). 
\end{equation*}  Setting \begin{equation*}
\Im m\HC:= \C iI\ \oplus\ \C J\ \oplus\ \C iK
\end{equation*} and since $e_1\cdot e_2\simeq iI, e_2\cdot e_4\simeq -J$ and $e_4\cdot e_1\simeq -iK$ in the identification $Cl_0(3,1)\simeq \HC$ given in \eqref{elempar}, we easily get 
\begin{equation*}
\Q=\left\lbrace \xi\in \Im m \HC \mid H(\xi,\xi)=-1 \right\rbrace.
\end{equation*} We define the \textit{cross product} of two vectors $\xi,\xi'\in \Im m\HC$ by \begin{equation*}
\xi\times\xi':=\frac{1}{2}(\xi\xi'-\xi'\xi) \in \Im m\HC.
\end{equation*} 
We also define the \textit{mixed product} of three vectors $\xi,\xi',\xi''\in\Im m\HC$ by \begin{equation*}
[\xi,\xi',\xi'']:=H(\xi\times\xi',\xi'') \in \C;
\end{equation*} it is easily seen to be, up to sign, the determinant of the vectors $\xi,\xi',\xi''\in\Im m\HC$ in the basis $(iI,J,iK)$ of $\Im m\HC$ (considered as a complex space). The mixed product is a complex volume form on $\Im m\HC,$ and induces a natural \textit{complex area form} $\omega_{\Q}$ on $\Q$ by \begin{equation*}
\omega_{\Q}(p)(\xi,\xi'):=[\xi,\xi',p],
\end{equation*} for all $p\in\Q$ and all $\xi,\xi'\in T_p\Q.$ Note that $\omega_{\Q}(p)(\xi,\xi')=0$ if and only if $\xi$ and $\xi'$ are dependent over $\C.$

\subsection{The Gauss map of a timelike surface in $\R^{3,1}$}

Let $M$ be an oriented timelike surface in $\R^{3,1}.$ We consider its Gauss map
\begin{eqnarray*}
G: M & \longrightarrow & \Q \\
x & \longmapsto & u_1\cdot u_2
\end{eqnarray*} where, at $x\in M,$ $(u_1,u_2)$ is a positively oriented orthogonal basis of $T_xM$ such that $|u_1|^2=-|u_2|^2=-1.$ The pull-back by the Gauss map of the area form $\omega_{\Q}$ is given by the following proposition (see similar results in \cite{bayard1,bayard_patty}):\\
\begin{prop}\label{gauss0} We have \begin{equation*}
G^*\omega_{\Q}=(K+iK_N)\ \omega_M,
\end{equation*} where $\omega_M$ is the area form, $K$ is the Gauss curvature and $K_N$ is the normal curvature of $M.$ In particular, assuming moreover that 
\begin{equation*}\label{der gauss}
dG_x:T_xM \longrightarrow T_{G(x)}\Q
\end{equation*} is one-to-one at some point $x\in M,$ then $K=K_N=0$ at $x$ if and only if the linear space $dG_x(T_xM)$ is a complex line in $T_{G(x)}\Q,$ \textit{i.e.} 
\begin{equation}\label{linea}
dG_x(T_xM)=\{ z\ U \mid z\in \C \}
\end{equation} where $U$ is some vector belonging to $T_{G(x)}\Q\subset \HC.$
\end{prop}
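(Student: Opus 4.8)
The plan is to compute $G^*\omega_{\Q}$ in a moving frame adapted to the surface and then read off the geometric meaning. First I would fix $x\in M$ and choose a local positively oriented orthonormal tangent frame $(e_1,e_2)$ with $|e_1|^2=-1,\ |e_2|^2=1$ and a positively oriented orthonormal normal frame $(e_3,e_4)$ of $E.$ Then $G=e_1\cdot e_2\in\Q\subset\Im m\,\HC,$ and I would differentiate: $dG(X)=(\overline{\nabla}_X e_1)\cdot e_2+e_1\cdot(\overline{\nabla}_X e_2),$ and expand $\overline{\nabla}_X e_i=\nabla_X e_i+B(X,e_i)$ using the connection form $\omega_{12}$ of $M$ and the second-fundamental-form components $h^k_{ij}=\la B(e_i,e_j),e_k\ra$ for $k=3,4.$ Carrying this out, the terms involving $\omega_{12}$ cancel (they correspond to the fibre rotation fixing $G$), and what remains is $dG(X)=\sum_{k=3,4}\bigl(h^k_{1\ell}\,\epsilon_\ell\,e_k\cdot e_\ell\cdot e_2+h^k_{2\ell}\,\epsilon_\ell\, e_1\cdot e_k\cdot e_\ell\bigr)$-type expressions lying in $T_G\Q\subset\Im m\,\HC.$

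Next I would plug $dG(e_1)$ and $dG(e_2)$ into $\omega_{\Q}(G)(\cdot,\cdot)=[\,\cdot\,,\,\cdot\,,G],$ which is (up to sign) the determinant in the complex basis $(iI,J,iK)$ of $\Im m\,\HC,$ i.e.\ under $e_1\cdot e_2\simeq iI,\ e_2\cdot e_4\simeq -J,\ e_4\cdot e_1\simeq -iK.$ This reduces everything to a $3\times 3$ complex determinant whose entries are quadratic in the $h^k_{ij}.$ The expected outcome is that the coefficient of $\omega_M=e^1\wedge e^2$ splits into a real part equal to $|B(e_1,e_2)|^2-\la B(e_1,e_1),B(e_2,e_2)\ra$ — which is exactly $K$ by the Gauss equation recalled in Section \ref{gauss_formula} — and an imaginary part equal to $\la(S_{e_3}S_{e_4}-S_{e_4}S_{e_3})(e_1),e_2\ra,$ which is $K_N$ by the Ricci equation. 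So $G^*\omega_{\Q}=(K+iK_N)\,\omega_M.$ The second part of the proposition is then immediate: since $\omega_{\Q}(G)(\xi,\xi')=0$ iff $\xi,\xi'$ are $\C$-dependent, when $dG_x$ is injective the image $dG_x(T_xM)$ is a $\C$-line iff the pullback area form vanishes at $x,$ i.e.\ iff $K+iK_N=0,$ i.e.\ iff $K=K_N=0$; and a complex line in the complex plane $T_{G(x)}\Q$ is exactly a set of the form \eqref{linea}.

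The main obstacle I anticipate is purely computational bookkeeping: correctly tracking the signs $\epsilon_j=\la e_j,e_j\ra$ and the signature conventions when expanding the Clifford products $e_i\cdot e_j\cdot e_k$ and identifying them with $iI,J,iK,$ and making sure that the "det up to sign" in the mixed product is pinned down so that the final sign in $G^*\omega_{\Q}=(K+iK_N)\omega_M$ comes out correctly rather than with a spurious $-1.$ Since analogous computations are carried out in the spacelike case in \cite{bayard1} and the Lorentzian-in-$\R^{2,2}$ case in \cite{bayard_patty}, I would follow those references for the organization of the calculation and only highlight the changes forced by the $(3,1)$ signature and the timelike (rather than spacelike) induced metric on $M$; the conceptual content — cancellation of the connection terms and recognition of the Gauss and Ricci quantities — is unchanged.
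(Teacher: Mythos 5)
The paper itself omits the proof of this proposition, deferring to the analogous statements in \cite{bayard1,bayard_patty}; your outline — differentiate $G=e_1\cdot e_2$ with the ambient connection, note that the Levi--Civita connection form $\omega_{12}$ contributes $\omega_{12}(X)(e_2\cdot e_2+e_1\cdot e_1)=0$ and drops out, expand the remaining terms through the second fundamental form, evaluate $\omega_{\Q}(G)(dG(e_1),dG(e_2))=[dG(e_1),dG(e_2),G]$ as a determinant in the $\C$-basis $(iI,J,iK)$ of $\Im m\,\HC$, and identify real and imaginary parts with the Gauss and Ricci expressions for $K$ and $K_N$ — is exactly the computation those references carry out in their respective signatures. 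Your handling of the second part is also correct: with $dG_x$ injective the image is a real $2$-plane, and the paper's observation that $\omega_{\Q}(p)(\xi,\xi')=0$ iff $\xi,\xi'$ are $\C$-dependent shows it is a complex line precisely when $(K+iK_N)\,\omega_M$ vanishes at $x$, i.e.\ when $K=K_N=0$.
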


As a consequence of this proposition, if $K=K_N=0$ and $G:M\to\Q$ is a regular map (\textit{i.e.} if $dG_x$ is injective at every point $x$ of $M$), there is a unique complex structure $\mathcal{J}$ on $M$ such that 
\begin{equation}\label{complex}
dG_x(\mathcal{J} u)=i\ dG_x(u)
\end{equation}
for all $x\in M$ and all $u\in T_xM.$ Indeed, \eqref{linea} implies that $dG_x(T_xM)$ is stable by multiplication by $i,$ and we may define 
\begin{equation*}
\mathcal{J} u:=dG_x^{-1}(i\ dG_x(u)).
\end{equation*}
This complex structure coincides with the complex structure considered in \cite{AGM}, and we will use this in Section \ref{app-3}.

\subsection{The Laplacian of the Gauss map}

We suppose that the immersion of $M$ in $\R^{3,1}$ is given by some spinor field $\varphi \in\Gamma(\Sigma)$ solution of the Dirac equation 
$D\varphi=\vec{H}\cdot\varphi$ with $H(\varphi,\varphi)=1.$ We first express the Gauss map of the immersion in terms of $\varphi:$ \\
\begin{lem}\label{gauss-varphi} The Gauss map is given by 
\begin{eqnarray*}
G : M & \longrightarrow & \Q \\
x & \longmapsto & \laa e_1\cdot e_2 \cdot \varphi,\varphi \raa
\end{eqnarray*} where, for all $x\in M,$  $(e_1,e_2)$ is a positively oriented and orthonormal basis of $T_xM.$
\end{lem}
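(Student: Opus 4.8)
The plan is to verify directly that the $\HC$-valued quantity $\laa e_1\cdot e_2\cdot\varphi,\varphi\raa$ coincides with the expression $u_1\cdot u_2\simeq\xi(u_1)\cdot\xi(u_2)$ defining the Gauss map, exploiting the simple observation already recorded at the end of Section \ref{weierstrass_repre}: it suffices to check the identity in a spinorial frame $\tilde s$ adapted to the immersion, in which $\varphi$ is represented by $\pm\e$ and the tangent vectors $e_1,e_2$ are represented by $i\e, I\in\HC$ (as recorded in the notation paragraph of Section \ref{twisted sec}). First I would recall that by Theorem \ref{teorema_isometria} the frame $(e_1,e_2,e_3,e_4)$ of $TM\oplus E$ along $M$ maps under $\xi$ to an orthonormal frame of $\R^{3,1}$ along $F(M)$, with $(e_1,e_2)$ tangent and positively oriented, $|e_1|^2=-1$, $|e_2|^2=1$; hence the Gauss map of $F(M)$ at $x$ is precisely $\xi(e_1)\cdot\xi(e_2)\in\Q\subset Cl_0(3,1)$.

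Next I would compute both sides in the frame $\tilde s$. On one hand $[e_1\cdot e_2]=[e_1][e_2]$ read off from $e_1\simeq iI$ in the even Clifford algebra (using $e_j\in Cl_1$ represented by $i\e$ and $I$, so $e_1\cdot e_2\simeq (i\e)(I)=iI\in\HC\simeq Cl_0(3,1)$), and therefore $[e_1\cdot e_2\cdot\varphi]=[e_1\cdot e_2]\,[\varphi]=iI\cdot(\pm\e)=\pm iI$, giving $\laa e_1\cdot e_2\cdot\varphi,\varphi\raa=\overline{[\varphi]}\,[e_1\cdot e_2\cdot\varphi]=(\pm\e)(\pm iI)=iI$. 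On the other hand, in the same frame $\xi(e_1)=[e_1]=ie_1$-coordinate, i.e. $\xi(e_1)\simeq i\e$ and $\xi(e_2)\simeq I$ under the identification \eqref{iden_espa} of $\R^{3,1}$ in $\HC$, so $\xi(e_1)\cdot\xi(e_2)\simeq (i\e)(I)=iI$ as an element of $Cl_0(3,1)\simeq\HC$. The two computations agree, which gives the claimed formula pointwise; since the choice of positively oriented orthonormal tangent frame only changes $(e_1,e_2)$ by a $Spin(1,1)$-action under which both $\laa e_1\cdot e_2\cdot\varphi,\varphi\raa$ and $u_1\cdot u_2$ are invariant (the Clifford element $e_1\cdot e_2$ is unchanged, up to sign fixed by the orientation), the formula is frame-independent and globally well defined.

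I would then note that this is exactly parallel to the identification of $\Q$ with $\{\xi\in\Im m\HC\mid H(\xi,\xi)=-1\}$ in Section \ref{gauss section}, where $e_1\cdot e_2\simeq iI$; so the lemma simply says the Gauss map, viewed through the spinor representation, is the normalized bivector $\laa e_1\cdot e_2\cdot\varphi,\varphi\raa$, and one checks $H$ of it with itself equals $-1$ directly from $H(iI,iI)=-(i)^2=-1$ — wait, more carefully, from the defining relation $|u_1|^2=-|u_2|^2=-1$ together with the Clifford relations, so that the image indeed lands in $\Q$.

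The main obstacle is bookkeeping with signs and conjugations: one must be careful with the two different conjugations $\overline{(\cdot)}$ and $\widehat{(\cdot)}$ on $\HC$ appearing in \eqref{prod_escal_vector}, with the identification \eqref{iden_espa} that inserts a factor $i$ in the first coordinate, and with the sign of $\epsilon_1=|e_1|^2=-1$; getting these straight is what makes the frame computation come out to $iI$ rather than $-iI$ or $iI$ times a phase. Everything else is a short verification analogous to \cite[Section 4]{bayard1} and \cite{bayard_patty}, and I would refer the reader there for the fully analogous computation, carrying out here only the frame-wise check just described.
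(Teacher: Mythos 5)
Your broad strategy --- identify the Gauss map of the immersion with $\xi(e_1)\cdot\xi(e_2)$ via Theorem \ref{teorema_isometria} and then check the claimed identity pointwise in a spinorial frame --- is the same as the paper's, but the frame you set up does not exist. You ask for a frame $\tilde s$ in which simultaneously $[\varphi]=\pm\e$ and $[e_1]=i\e$, $[e_2]=I$. Those two conditions are mutually exclusive in general: the representatives $[e_1]=i\e$, $[e_2]=I$, $[e_3]=J$, $[e_4]=K$ hold precisely in a frame $\tilde s\in\tilde Q$ adapted to the splitting $TM\oplus E$, and in such a frame $[\varphi]\in Spin(3,1)$ is a nontrivial map. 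Conversely, the ``simple observation'' you invoke from Section \ref{weierstrass_repre} is formulated in a spinorial frame of $\Sigma\R^{3,1}_{|M}$ lying above the \emph{canonical} basis of $\R^{3,1}$, in which $[\varphi]=\pm\e$ but the tangent vectors $e_1,e_2$ are represented by their ambient coordinates $[e_1],[e_2]\in\HC$, not by $i\e,I$. Because you impose both at once, your computation returns the constant $iI$ for both sides --- which would force the Gauss map to be constant --- so the error is not merely notational.

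The paper works instead in an arbitrary frame $\tilde s\in\tilde Q$ adapted to $(e_1,e_2,e_3,e_4)$, where $[e_1]=i\e$, $[e_2]=I$ but $[\varphi]$ is generic. Writing $u_j=\xi(e_j)=\overline{[\varphi]}\,[e_j]\,\widehat{[\varphi]}$ and $u_1\cdot u_2\simeq u_1\widehat{u_2}=\overline{[\varphi]}\,[e_1]\,\widehat{[\varphi]}\,\widehat{\overline{[\varphi]}}\,\widehat{[e_2]}\,[\varphi]$, one cancels the middle factor using $\widehat{[\varphi]}\,\widehat{\overline{[\varphi]}}=\widehat{[\varphi]\overline{[\varphi]}}=\widehat{H(\varphi,\varphi)}=1$, arriving at $\overline{[\varphi]}\,[e_1]\widehat{[e_2]}\,[\varphi]=\overline{[\varphi]}\,[e_1\cdot e_2]\,[\varphi]=\laa e_1\cdot e_2\cdot\varphi,\varphi\raa$. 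That cancellation, which uses the normalization $H(\varphi,\varphi)=1$, is exactly the step your choice $[\varphi]=\pm\e$ was trying to render trivial; it cannot be bypassed that way. If you do want to reduce to the model case $\varphi=\pm\e_{|M}$, you also need to argue that $\laa e_1\cdot e_2\cdot\varphi,\varphi\raa$ and $\xi(e_1)\cdot\xi(e_2)$ transform by the same conjugation under the constant right $Spin(3,1)$ action that relates a general normalized solution $\varphi$ to $\pm\e_{|M}$; your invariance remark only treats the $Spin(1,1)$ change of the tangent frame $(e_1,e_2)$, not the change of $\varphi$.
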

\begin{proof} Setting $u_1=\laa e_1\cdot\varphi,\varphi\raa$ and $u_2=\laa e_2\cdot\varphi,\varphi\raa\in\R^{3,1}\subset \HC,$ the basis $(u_1,u_2)$ is an orthonormal basis of the immersion  (Theorem \ref{teorema_isometria}), and 
\begin{align*}
u_1\cdot u_2 \simeq u_1 \widehat{u}_2 &=\laa e_1\cdot\varphi,\varphi\raa \widehat{\laa e_2\cdot\varphi,\varphi\raa}=\left( \overline{[\varphi]}[e_1]\widehat{[\varphi]}\right) \left( \widehat{\overline{[\varphi]}[e_1]\widehat{[\varphi]}} \right) \\
&= \overline{[\varphi]}[e_1]\widehat{[e_2]}[\varphi]=\laa e_1\cdot e_2\cdot\varphi,\varphi\raa,
\end{align*} where $[e_1], [e_2]$ and $[\varphi]\in\HC$ represent $e_1,e_2$ and $\varphi$ in some frame $\tilde{s}$ of $\tilde{Q}.$
\end{proof}
According to Theorem \ref{principal}, the spinor field $\varphi$ also satisfies $\nabla_X\varphi=\eta(X)\cdot \varphi,$ for all $X\in TM,$ where 
\begin{equation}\label{eta-sec-gauss}
\eta(X)=-\frac{1}{2}\sum_{j=1,2}\epsilon_j e_j \cdot B(X,e_j);
\end{equation} the second fundamental form $B$ was defined in Proposition \ref{lema_princ}. The differential of the Gauss map is linked to the second fundamental form $B$ as follows: \\
\begin{lem}\label{derivada-gauss-varphi} The $1-$form $\tilde{\eta}:=\laa \eta\cdot\varphi,\varphi\raa$ satisfies 
\begin{equation*}
dG=2G\tilde{\eta}.
\end{equation*}
\end{lem}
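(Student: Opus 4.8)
The plan is to differentiate the formula $G=\laa e_1\cdot e_2\cdot\varphi,\varphi\raa$ from Lemma \ref{gauss-varphi}, working in a local spinorial frame $\tilde{s}\in\tilde{Q}$ lying above an orthonormal frame $(e_1,e_2,e_3,e_4)$ adapted to the immersion. In such a frame, $G\simeq G\widehat{G}^{-1}$ acting on vectors, but more directly $[G]=\overline{[\varphi]}[e_1]\widehat{[e_2]}[\varphi]$ with $[e_1]=iI$-type entries; the cleanest route is to use the coordinate identities $[X\cdot\varphi]=[X]\widehat{[\varphi]}$, $[\laa\psi,\psi'\raa]=\overline{[\psi']}[\psi]$ and the product rule. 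First I would fix a point and choose the frame so that $\nabla_X e_j=0$ there (a synchronous frame for the $SO(1,1)\times SO(2)$-connection at that point), so that at that point $dG(X)=\laa e_1\cdot e_2\cdot\nabla_X\varphi,\varphi\raa+\laa e_1\cdot e_2\cdot\varphi,\nabla_X\varphi\raa$, and then substitute $\nabla_X\varphi=\eta(X)\cdot\varphi$.

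The key step is then to combine these two terms into $2G\tilde\eta$. Writing everything in coordinates in $\tilde s$: the first term is $\overline{[\varphi]}\,\overline{[\eta(X)]}^{?}\cdots$ — more precisely, since $\eta(X)\in Cl_0$, $[\eta(X)\cdot\varphi]=[\eta(X)][\varphi]$, so the first term contributes $\overline{[\varphi]}[e_1]\widehat{[e_2]}[\eta(X)][\varphi]$ and the second contributes $\overline{[\eta(X)][\varphi]}\,[e_1]\widehat{[e_2]}[\varphi]=\overline{[\varphi]}\,\overline{[\eta(X)]}\,[e_1]\widehat{[e_2]}[\varphi]$. Using that $\eta(X)=e_1\cdot\nu_1+e_2\cdot\nu_2$ (from \eqref{expresi_eta} in the proof of Proposition \ref{lema_princ}) — no, rather using that $\eta(X)$ is a sum of terms $e_j\cdot\nu$ with $e_j$ tangent and $\nu$ normal, hence in the frame $\tilde s$ it is represented by a purely imaginary-type quaternion (a combination of $iI,J,iK$, like elements of $\Im m\HC$), one gets $\overline{[\eta(X)]}=-[\eta(X)]$ up to the relevant conjugation, and the Clifford relation $e_1\cdot e_2\cdot\eta(X)+\eta(X)\cdot e_1\cdot e_2 = $ (something)$\cdot e_1\cdot e_2$ lets the $e_1\cdot e_2$ factor commute past $\eta(X)$ in a controlled way. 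The upshot is that both terms become $G\cdot\laa\eta(X)\cdot\varphi,\varphi\raa$, giving $dG(X)=2G\tilde\eta(X)$.

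The main obstacle is the bookkeeping of conjugations ($\widehat{\cdot}$ on $\C$-coefficients versus $\overline{\cdot}$ on $\HC$) and verifying that $e_1\cdot e_2$ can be pulled through $\eta(X)$: concretely one must check, using $\eta(X)\in\mathrm{span}\{e_j\cdot\nu : j=1,2,\ \nu\in E\}\subset Cl_1\cdot Cl_1\subset Cl_0$, that $e_1\cdot e_2\cdot\eta(X)\cdot\varphi$ and $\eta(X)\cdot e_1\cdot e_2\cdot\varphi$ are related correctly after taking $\laa\cdot,\varphi\raa$. This is the analogue of the computation in \cite[Section 3]{bayard1} and \cite{bayard_patty} for the Euclidean and split-signature cases, and I expect the signature changes (the $\epsilon_j$ factors and the $i$'s in $[e_1]=i\e$) to enter only through signs that cancel. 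I would therefore carry out the calculation in coordinates in a synchronous frame, reduce to the commutation identity for $e_1\cdot e_2$ against elements of $Cl_1\cdot Cl_1$, invoke the parallel-type structure of $\eta(X)$, and conclude; a remark that the computation parallels the cited references would justify omitting the most tedious sign-chasing.
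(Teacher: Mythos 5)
Your plan follows the paper's proof almost line by line: fix a synchronous frame at $p$, differentiate $G=\laa e_1\cdot e_2\cdot\varphi,\varphi\raa$ by Leibniz, substitute $\nabla_X\varphi=\eta(X)\cdot\varphi$, pass to coordinates in $\tilde s$, and use $\overline{[\eta(X)]}=-[\eta(X)]$ together with the anticommutation of $[e_1\cdot e_2]=iI$ with $[\eta(X)]\in\C J\oplus\C\,iK$ (which holds because $\eta(X)$ is a sum of tangent-times-normal products, not a general element of $\Im m\HC$) to combine the two Leibniz terms into $2\laa e_1\cdot e_2\cdot\eta(X)\cdot\varphi,\varphi\raa$.

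The one step your outline asserts but does not justify is the final factorization
\[
\laa e_1\cdot e_2\cdot\eta(X)\cdot\varphi,\varphi\raa
=\overline{[\varphi]}\,[e_1\cdot e_2]\,[\eta(X)]\,[\varphi]
= G\,\tilde\eta(X).
\]
To split the left-hand coordinate expression into the quaternionic product $\bigl(\overline{[\varphi]}[e_1\cdot e_2][\varphi]\bigr)\bigl(\overline{[\varphi]}[\eta(X)][\varphi]\bigr)$ you must insert $[\varphi]\,\overline{[\varphi]}=\e$ between the two middle factors, and this is exactly where the normalization $H(\varphi,\varphi)=1$ is used, via the identity $p\overline p=\overline p p=H(p,p)\e$ in $\HC$. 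This insertion is the small but essential trick that turns the statement into a genuine quaternionic product formula $dG=2G\tilde\eta$; the paper makes it explicit, and your writeup should too. Everything else in your proposal is sound and matches the paper's argument.
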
 
\begin{proof} We suppose that $(e_1,e_2)$ is a moving frame on $M$ such that $\nabla e_{i|p}=0$ and compute \begin{align*}
dG(X)&=\laa e_1\cdot e_2\cdot \nabla_X\varphi,\varphi\raa + \laa e_1\cdot e_2\cdot \varphi,\nabla_X\varphi\raa \\ 
&= \laa e_1\cdot e_2\cdot \eta(X)\cdot \varphi,\varphi\raa + \laa e_1\cdot e_2\cdot \varphi,\eta(X)\cdot\varphi\raa \\
&= 2\laa e_1\cdot e_2\cdot \eta(X)\cdot \varphi,\varphi \raa.
\end{align*} But 
\begin{align*}
\laa e_1\cdot e_2\cdot \eta(X)\cdot \varphi,\varphi \raa &= \overline{[\varphi]} [e_1\cdot e_2] [\eta(X)] [\varphi] 
= \laa e_1\cdot e_2\cdot \varphi,\varphi \raa \laa \eta(X)\cdot \varphi,\varphi \raa
\end{align*} where $[\varphi], [e_1\cdot e_2]$ and $[\eta(X)]\in \HC$ represent $\varphi, e_1\cdot e_2$ and $\eta(X)$ respectively in some local frame $\tilde{s}$ of $\tilde{Q}.$
\end{proof}

Using the lemma above, in the same moving frame, the Laplacian of the Gauss map $G$ seen as a map from $M$ to $\HC$ (\textit{i.e.} we will use the connection of the ambient space instead of the connection of the Grassmannian) is given by 
\begin{align}\label{laplacian-gauss}
\Delta G&=-\nabla dG(e_1,e_1)+\nabla dG(e_2,e_2) \notag \\ 
&= -2 \left[ e_1(G\tilde{\eta}(e_1))-G\tilde{\eta}(\nabla_{e_1}e_1) \right]+2 \left[ e_2(G\tilde{\eta}(e_2))-G\tilde{\eta}(\nabla_{e_2}e_2) \right] \notag \\
&=2G \left( -2\tilde{\eta}(e_1)\tilde{\eta}(e_1)+2\tilde{\eta}(e_2)\tilde{\eta}(e_2) - e_1(\tilde{\eta}(e_1)) + e_2(\tilde{\eta}(e_2))\right).  
\end{align}
Now, we note that 
\begin{align*}
-\tilde{\eta}(e_1)\tilde{\eta}(e_1)+\tilde{\eta}(e_2)\tilde{\eta}(e_2) 
&= -\overline{[\varphi]}[\eta(e_1)]\overline{[\varphi]}[\varphi][\eta(e_1)][\varphi] + 
\overline{[\varphi]}[\eta(e_2)]\overline{[\varphi]}[\varphi][\eta(e_2)][\varphi] \\
&= -\overline{[\varphi]}[\eta(e_1)\cdot \eta(e_1)][\varphi] + 
\overline{[\varphi]}[\eta(e_2)\cdot \eta(e_2)][\varphi] \\
&= \overline{[\varphi]}[-\eta(e_1)\cdot \eta(e_1)+\eta(e_2)\cdot \eta(e_2)][\varphi] \\
&= \laa \left(-\eta(e_1)\cdot \eta(e_1)+\eta(e_2)\cdot \eta(e_2)\right)\cdot\varphi,\varphi \raa;
\end{align*} and 
\begin{align*}
- e_1(\tilde{\eta}(e_1)) + e_2(\tilde{\eta}(e_2)) 
&= -\laa \nabla_{e_1}\eta(e_1)\cdot\varphi,\varphi\raa - \laa\eta(e_1)\cdot\varphi,\nabla_{e_1}\varphi\raa
\\ & \ \ \ +\laa \nabla_{e_2}\eta(e_2)\cdot\varphi,\varphi\raa + \laa\eta(e_2)\cdot\varphi,\nabla_{e_2}\varphi\raa \\ &= \laa \left( -\nabla_{e_1}\eta(e_1)+\nabla_{e_2}\eta(e_2)\right)\cdot\varphi,\varphi \raa \\ & \ \ \ - \laa\eta(e_1)\cdot\varphi,\eta(e_1)\cdot\varphi\raa + \laa\eta(e_2)\cdot\varphi,\eta(e_2)\cdot\varphi\raa, 
\end{align*} but
\begin{align*}
\laa\eta(e_i)\cdot\varphi,\eta(e_i)\cdot\varphi\raa &=  \overline{[\eta(e_i)][\varphi]}[\eta(e_i)][\varphi] = \overline{[\varphi]}\ \overline{[\eta(e_i)]}[\eta(e_i)][\varphi] \\
&= - \overline{[\varphi]}[\eta(e_i)][\eta(e_i)][\varphi]= -\overline{[\varphi]}[\eta(e_i)\cdot\eta(e_i)][\varphi] \\
&= -\laa \eta(e_i)\cdot\eta(e_i)\cdot\varphi,\varphi\raa, 
\end{align*}
thus 
\begin{align*}
- e_1(\tilde{\eta}(e_1)) + e_2(\tilde{\eta}(e_2))& = \laa \left( -\nabla_{e_1}\eta(e_1)+\nabla_{e_2}\eta(e_2)\right)\cdot\varphi,\varphi \raa \\ & \ \ \ + \laa \left( \eta(e_1)\cdot\eta(e_1)-\eta(e_2)\cdot\eta(e_2)\right)\cdot\varphi,\varphi\raa.
\end{align*} Replacing this equalities in \eqref{laplacian-gauss}, we get 
\begin{align}\label{laplacian-gauss1}
\Delta G &= 2G \laa \left(-\eta(e_1)\cdot \eta(e_1)+\eta(e_2)\cdot \eta(e_2)\right) \cdot\varphi,\varphi \raa \notag \\ & \ \ \ + 2G\laa \left( -\nabla_{e_1}\eta(e_1)+\nabla_{e_2}\eta(e_2) \right) \cdot\varphi,\varphi \raa .
\end{align} 
We finally compute this terms using the following lemma. \\
\begin{lem} We have the following identities: 
\begin{enumerate}
\item[\it 1-] $-\eta(e_1)\cdot \eta(e_1)+\eta(e_2)\cdot \eta(e_2)=-|\vec{H}|^2+\frac{K}{2}-\frac{K_N}{2} e_1\cdot e_2\cdot e_3\cdot e_4$ 

\item[\it 2-] $-\nabla_{e_1}\eta(e_1)+\nabla_{e_2}\eta(e_2)=e_1\cdot \nabla_{e_1}\vec{H}-e_2\cdot \nabla_{e_2}\vec{H}$
\end{enumerate}
\end{lem}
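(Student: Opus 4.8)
The plan is to compute both identities directly in a spinorial frame $\tilde{s}$ above the adapted orthonormal frame $(e_1,e_2,e_3,e_4)$, in which $e_1,e_2,e_3,e_4 \in Cl_1(TM\oplus E)$ are represented by $i\e,I,J,K \in \HC$. I would write $B(e_i,e_j)=\sum_{\alpha=3,4}b^\alpha_{ij}\,e_\alpha$ with $b^\alpha_{ij}=b^\alpha_{ji}$, so that, using $\epsilon_1=-1$, $\epsilon_2=1$ and \eqref{eta-sec-gauss},
\begin{equation*}
\eta(e_1)=\tfrac12 e_1\cdot B(e_1,e_1)-\tfrac12 e_2\cdot B(e_1,e_2),\qquad
\eta(e_2)=\tfrac12 e_1\cdot B(e_1,e_2)-\tfrac12 e_2\cdot B(e_2,e_2).
\end{equation*}
For the first identity, I would expand $-\eta(e_1)\cdot\eta(e_1)+\eta(e_2)\cdot\eta(e_2)$ as a sum of products $e_i\cdot e_\alpha\cdot e_j\cdot e_\beta$ with $i,j\in\{1,2\}$, $\alpha,\beta\in\{3,4\}$, using the Clifford relations $e_i\cdot e_\alpha=-e_\alpha\cdot e_i$ and $e_\alpha\cdot e_\beta=-e_\beta\cdot e_\alpha$ for $\alpha\neq\beta$, $e_i\cdot e_i=\epsilon_i$, $e_\alpha\cdot e_\alpha=-1$. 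The scalar part collects the terms with $i=j$ and $\alpha=\beta$, producing $-|B(e_1,e_1)|^2/\text{stuff}$ combinations that reorganize (via $|\vec H|^2=\tfrac14|B(e_1,e_1)-\epsilon\cdots|^2$ — more precisely $\vec H=\tfrac12(\epsilon_1 B(e_1,e_1)+\epsilon_2 B(e_2,e_2))=\tfrac12(-B(e_1,e_1)+B(e_2,e_2))$, so $|\vec H|^2=\tfrac14(|B(e_1,e_1)|^2-2\la B(e_1,e_1),B(e_2,e_2)\ra+|B(e_2,e_2)|^2)$) together with the Gauss equation $K=|B(e_1,e_2)|^2-\la B(e_1,e_1),B(e_2,e_2)\ra$ to give $-|\vec H|^2+\tfrac K2$. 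The remaining terms are multiples of $e_1\cdot e_2\cdot e_3\cdot e_4$ (the volume element), and their coefficient should be exactly $-\tfrac{K_N}{2}$ by the Ricci equation $K_N=\la(S_{e_3}S_{e_4}-S_{e_4}S_{e_3})e_1,e_2\ra$; I would verify this by writing $S_{e_\alpha}$ in the basis $(e_1,e_2)$ and matching the antisymmetric combination of the $b^\alpha_{ij}$.

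For the second identity, I would use that in the chosen frame $\nabla e_i|_p=0$ and $\nabla e_\alpha|_p=0$ (the frame is adapted and $E$-sections are chosen parallel at $p$ — or one carries the connection terms and sees they cancel by compatibility), so that $\nabla_{e_k}\eta(e_k)$ at $p$ differentiates only the coefficients: $\nabla_{e_1}\eta(e_1)=\tfrac12 e_1\cdot(\nabla^E_{e_1}B)(e_1,e_1)-\tfrac12 e_2\cdot(\nabla^E_{e_1}B)(e_1,e_2)$, similarly for $\nabla_{e_2}\eta(e_2)$. Then $-\nabla_{e_1}\eta(e_1)+\nabla_{e_2}\eta(e_2)$ becomes a combination of $e_i\cdot(\tilde\nabla_{e_j}B)(e_k,e_l)$, and the Codazzi equation $(\tilde\nabla_X B)(Y,Z)=(\tilde\nabla_Y B)(X,Z)$ lets me symmetrize and collapse the four terms. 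Using $\nabla_{e_k}\vec H=\tfrac12\sum_j\epsilon_j(\tilde\nabla_{e_k}B)(e_j,e_j)$, the right-hand side $e_1\cdot\nabla_{e_1}\vec H-e_2\cdot\nabla_{e_2}\vec H$ expands to the same combination; checking equality is then a short bookkeeping of indices.

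The main obstacle — or rather the only place requiring care rather than mechanical expansion — is the first identity: correctly identifying the coefficient of the volume element $e_1\cdot e_2\cdot e_3\cdot e_4$ and recognizing it as $-K_N/2$ via the Ricci equation, since this requires tracking signs through several Clifford reorderings and relating the resulting antisymmetric expression in $b^\alpha_{ij}$ to the shape operators $S_{e_\alpha}$. Everything else is routine Clifford algebra together with direct substitution of the Gauss, Codazzi and Ricci equations from Section \ref{gauss_formula}. This is entirely analogous to the computations carried out in \cite{bayard1} and \cite{bayard_patty}, so I would refer the reader there for the detailed bookkeeping and only present the organizing identities here.
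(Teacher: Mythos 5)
Your plan follows the paper's proof exactly: write $\eta(e_i)=\tfrac12(e_1\cdot B_{i1}-e_2\cdot B_{i2})$, expand the Clifford products, separate the scalar part (handled by the Gauss equation and $\vec H=\tfrac12(-B_{11}+B_{22})$) from the $e_1\cdot e_2\cdot e_3\cdot e_4$ part (handled by the Ricci equation) for identity \emph{1-}, and use a parallel moving frame together with the Codazzi equation for identity \emph{2-}. One small sign caution: under the paper's Clifford convention ($v\cdot v=-\la v,v\ra$, as one reads off the Clifford map), the tangent relation is $e_i\cdot e_i=-\epsilon_i$, not $e_i\cdot e_i=\epsilon_i$ as you wrote; your normal relation $e_\alpha\cdot e_\alpha=-1$ is already in the paper's convention, so the two lines as stated are mutually inconsistent and you would need to fix this before the bookkeeping can come out right.
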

\begin{proof} Using the expression of $\eta$ given in \eqref{eta-sec-gauss}, we have 
\begin{equation*}
\eta(e_i)=\frac{1}{2}(e_1\cdot B_{i1}-e_2\cdot B_{i2}) \hspace{0.2in}\mbox{where}\hspace{0.2in} B_{ij}=B(e_i,e_j). 
\end{equation*}
{\it 1-} By a direct computation we get 
\begin{equation*}
-\eta(e_1)\eta(e_1)=\frac{1}{4}\left( B_{11}^2-B_{12}^2+e_1\cdot e_2\cdot (B_{12}\cdot B_{11}-B_{11}\cdot B_{12}) \right)
\end{equation*} and 
\begin{equation*}
\eta(e_2)\eta(e_2)=\frac{1}{4}\left( B_{22}^2-B_{12}^2+e_1\cdot e_2\cdot (B_{12}\cdot B_{22}-B_{22}\cdot B_{12}) \right).
\end{equation*} Using the Guass and Ricci equations, we easily get 
\begin{equation*}
B_{11}^2-2B_{12}^2+B_{22}^2=-4|\vec{H}|^2+2K;
\end{equation*} and 
\begin{equation*}
B_{12}\cdot B_{11}-B_{11}\cdot B_{12}+B_{12}\cdot B_{22}-B_{22}\cdot B_{12}=-2K_N e_3\cdot e_4.
\end{equation*} 
{\it 2-} We have
\begin{equation*}
-\nabla_{e_1}\eta(e_1)+\nabla_{e_2}\eta(e_2)=\frac{1}{2} \left( -e_1\cdot \nabla_{e_1}B_{11}+e_2\cdot \nabla_{e_1}B_{12}+e_1\cdot \nabla_{e_2}B_{12}-e_2\cdot\nabla_{e_2}B_{22} \right);
\end{equation*} using the Codazzi equation (recall that $(e_1,e_2)$ is a moving frame on $TM$ such that $\nabla e_{i|p}=0$) we obtain 
\begin{equation*}
\nabla_{e_1}B_{12}=\nabla_{e_2}B_{11} \hspace{0.2in}\mbox{and}\hspace{0.2in} \nabla_{e_2}B_{12}=\nabla_{e_1}B_{22},
\end{equation*} and since $\vec{H}=\frac{1}{2}(-B_{11}+B_{22})$ we obtain the result.
\end{proof}

Therefore, using the identities of the lemma above, we get 
\begin{align*}
\laa \left(-\eta(e_1)\cdot \eta(e_1)+\eta(e_2)\cdot \eta(e_2)\right) \cdot\varphi,\varphi \raa & = \left( -|\vec{H}|^2+\frac{K}{2}\right) \laa \varphi,\varphi\raa \\ &\ \ \ + \frac{K_N}{2}\laa -e_1\cdot e_2\cdot e_3\cdot e_4\cdot\varphi,\varphi\raa 
\end{align*} since $\laa\varphi,\varphi\raa=H(\varphi,\varphi)=1,$ and since the scalar product $\laa\cdot,\cdot\raa$ is $\C-$bilinear if $\Sigma$ is endowed with the complex structure given by the Clifford action of $-e_1\cdot e_2\cdot e_3\cdot e_4$ (which corresponds to the multiplication by $i$ on $\HC$), we finally get  
\begin{equation*}
\laa \left(-\eta(e_1)\cdot \eta(e_1)+\eta(e_2)\cdot \eta(e_2)\right) \cdot\varphi,\varphi \raa= \left( -|\vec{H}|^2+\frac{K}{2}\right)+i\frac{K_N}{2}.
\end{equation*}
 
On the other hand, using the Dirac equation $D\varphi=\vec{H}\cdot\varphi,$ we have 
\begin{align}\label{parallel}
D(\vec{H}\cdot\varphi) &=-e_1\cdot \nabla_{e_1}(\vec{H}\cdot\varphi)+e_2\cdot \nabla_{e_2}(\vec{H}\cdot\varphi) \notag \\
&= -\left( e_1\cdot \nabla_{e_1}\vec{H}-e_2\cdot \nabla_{e_2}\vec{H} \right)\cdot\varphi-\vec{H}\cdot D\varphi \notag \\
&= -\left( e_1\cdot \nabla_{e_1}\vec{H}-e_2\cdot \nabla_{e_2}\vec{H} \right)\cdot\varphi+|\vec{H}|^2\varphi,
\end{align} thus 
\begin{align*}
\laa \left( -\nabla_{e_1}\eta(e_1)+\nabla_{e_2}\eta(e_2) \right) \cdot\varphi,\varphi \raa &=|\vec{H}|^2\laa \varphi,\varphi\raa-\laa D(\vec{H}\cdot\varphi),\varphi\raa \\
&= |\vec{H}|^2-\laa D(\vec{H}\cdot\varphi),\varphi\raa.
\end{align*}

Finally, replacing this expressions in \eqref{laplacian-gauss1} we obtain the formula for the Laplacian of the Gauss map \begin{equation*}
\Delta G=G(K+iK_N)-2G\laa D(\vec{H}\cdot\varphi),\varphi\raa.
\end{equation*}

As a consequence of this, if the immersion $M\subset\R^{3,1}$ have parallel mean curvature vector, using \eqref{parallel} we get
\begin{equation}\label{g}
\Delta G=(-2|\vec{H}|^2+K+iK_N)G.
\end{equation}

This formula generalizes a classical result for surfaces in Euclidean space with constant mean curvature whose Gauss map is seen as a map from the surface in $\R^3;$ see \cite{BB}. As a particular case of \eqref{g}, we obtain the following result concerning the Laplacian of the Gauss map of a minimal timelike surface in $\R^{3,1}.$  \\

\begin{coro}\label{laplacian-gauss-minima} Assume that $M$ is a minimal timelike surface in $\R^{3,1}.$ Then the Laplacian of its Gauss map is given by the following formula 
\begin{equation*}
\Delta G = (K+iK_N) G
\end{equation*} where $K$ and $K_N$ are the Gauss and normal curvatures of the surface.
\end{coro}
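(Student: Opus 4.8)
The plan is to derive Corollary \ref{laplacian-gauss-minima} as the immediate specialization of the general Laplacian formula established just above it in the text. Recall that we obtained, for an arbitrary timelike immersion $M\subset\R^{3,1}$ given by a spinor field $\varphi$ solving $D\varphi=\vec H\cdot\varphi$ with $H(\varphi,\varphi)=1$, the identity
\begin{equation*}
\Delta G=G(K+iK_N)-2G\laa D(\vec H\cdot\varphi),\varphi\raa,
\end{equation*}
and more specifically, when the mean curvature vector is parallel, formula \eqref{g}: $\Delta G=(-2|\vec H|^2+K+iK_N)G$. So the first and essentially only step is to observe that a minimal timelike surface is precisely one with $\vec H=0$; in particular $\vec H$ is trivially parallel, so \eqref{g} applies, and substituting $|\vec H|^2=0$ yields $\Delta G=(K+iK_N)G$.

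First I would note that $\vec H\equiv 0$ implies $\nabla_X\vec H=0$ for all $X$, hence the mean curvature vector is parallel and \eqref{g} is valid. Then I would simply set $|\vec H|^2=0$ in \eqref{g} to conclude. Alternatively, one can go back one line further and observe directly from $\Delta G=G(K+iK_N)-2G\laa D(\vec H\cdot\varphi),\varphi\raa$ that the second term vanishes because $\vec H\cdot\varphi=D\varphi$ and, when $\vec H=0$, one has $D\varphi=0$, so $D(\vec H\cdot\varphi)=D(0)=0$; this again gives $\Delta G=(K+iK_N)G$. Either route is a one-line deduction.

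There is genuinely no obstacle here: the corollary is a direct substitution into an already-proven formula, and the only thing to be careful about is the (trivial) remark that $\vec H=0$ is a special case of the parallel mean curvature hypothesis under which \eqref{g} was derived. If a fuller write-up is wanted, one could additionally point out that $K$ and $K_N$ are to be understood via the Gauss and Ricci equations of Section \ref{gauss_formula} applied to the second fundamental form $B$ of the immersion, but this is already implicit in the statement of \eqref{g}. Thus the proof consists of invoking \eqref{g} with $\vec H=0$.

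\begin{proof}
Since $M$ is minimal, its mean curvature vector vanishes, $\vec H\equiv 0$; in particular $\vec H$ is parallel, so formula \eqref{g} applies. Substituting $|\vec H|^2=0$ into \eqref{g} gives
\begin{equation*}
\Delta G=(K+iK_N)\,G,
\end{equation*}
as claimed.
\end{proof}
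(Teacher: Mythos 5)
Your proof is correct and matches the paper exactly: the paper explicitly introduces the corollary with the phrase ``As a particular case of \eqref{g},'' so the intended argument is precisely the substitution $\vec H=0$ into $\Delta G=(-2|\vec H|^2+K+iK_N)G$. Your remark that $\vec H=0$ is trivially parallel is the only hypothesis check needed, and it is the right one.
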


\section{Flat timelike surfaces with flat normal bundle and regular Gauss map in $\R^{3,1}$}\label{app-3}
We suppose that $M$ is simply connected and that the bundles $TM$ and $E$ are flat ($K=K_N=0$). Recall that the spinor bundle $\Sigma:=\Sigma M\otimes\Sigma E$ is associated to the principal bundle $\tilde{Q}$ and to the representation $\rho$ of the structure group $Spin(1,1)\times Spin(2)$ in $\HC$ (Section \ref{twisted sec}). Since the curvatures $K$ and $K_N$ are zero, the spinorial connection on the bundle $\tilde{Q}$ is flat, and $\tilde{Q}$ admits a parallel local section $\tilde{s};$ since $M$ is simply connected, the section $\tilde{s}$ is in fact globally defined. We consider $\varphi\in\Gamma(\Sigma)$ a solution of the Dirac equation 
\begin{equation*}\label{dirac sec 2}
D\varphi=\vec{H}\cdot\varphi
\end{equation*} such that $H(\varphi,\varphi)=1,$ and define $g:=[\varphi]:M \to Spin(3,1)\subset\HC$ such that \begin{equation*}
\varphi=[\tilde{s},g]\hspace{0.1in} \in\ \Sigma=\tilde{Q}\times \HC /\rho,
\end{equation*} that is, $g$ in $\HC$ represents $\varphi$ in the parallel section $\tilde{s}.$ Recall that, by Theorem \ref{principal}, $\varphi$ also satisfies 
\begin{equation}\label{killing}
\nabla_X\varphi=\eta(X)\cdot\varphi
\end{equation} for all $X\in TM,$ where 
\begin{equation}\label{sff}
\eta(X)=-\frac{1}{2}\sum_{j=1,2} \epsilon_j e_j\cdot B(X,e_j)
\end{equation} for some bilinear map $B:TM\times TM \to E.$ 

In the following, we will denote by $(e_1,e_2)$ and $(e_3,e_4)$ the parrallel, orthonormal and positively oriented frames, respectively tangent, and normal to $M,$ corresponding to $\tilde{s},$ \textit{i.e.} such that $\pi(\tilde{s})=(e_1,e_2,e_3,e_4)$ where $\pi:\tilde{Q}\to Q_1\times Q_2$ is the natural projection. We moreover assume that the Gauss map $G$ of the immersion defined by $\varphi$ is regular, and consider the complex structure $\mathcal{J}$ induced on $M$ by $G,$ defined by \eqref{complex}.

Below we will prove that $g:M\to Spin(3,1)\subset\HC$ is a holomorphic map and that the immersion defined by $\varphi$ depends on two holomorphic maps and two smooth functions. We need the following lemmas; see in \cite{bayard1,bayard_patty} similar results. \\

\begin{lem}\label{gauss map1} The Gauss map of the immersion defined by $\varphi$ is given by 
\begin{eqnarray}\label{gauss phi}
G: M & \longrightarrow & \Q \  \subset \ \Im m\HC \\
x & \longmapsto & i g^{-1}I g \notag
\end{eqnarray} where $g:[\varphi]:M \to Spin(3,1)\subset \HC$ represents $\varphi$ in some local section of $\tilde{Q}.$
\end{lem}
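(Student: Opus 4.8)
The plan is to compute the Gauss map directly from Lemma \ref{gauss-varphi} (or rather the expression of $G$ in Lemma \ref{gauss-varphi}'s predecessor, Lemma \ref{gauss-varphi}) by passing to the parallel section $\tilde s$. Recall from Lemma \ref{gauss-varphi}'s statement (and Lemma \ref{gauss-varphi} itself) that the Gauss map of an immersion defined by $\varphi$ is $G=\laa e_1\cdot e_2\cdot\varphi,\varphi\raa$, where $(e_1,e_2)$ is a positively oriented orthonormal tangent frame. The key point is that here we have chosen $(e_1,e_2,e_3,e_4)$ to be \emph{the} parallel frame $\pi(\tilde s)$, so in the section $\tilde s$ we have $[\varphi]=g$, and by the notation conventions of Section \ref{twisted sec}, $e_1\cdot e_2\in Cl_0(TM\oplus E)$ is represented in $\tilde s$ by $[e_1\cdot e_2]=[e_1][e_2]=(i\e)(I)=iI\in\HC$ (using $e_1\simeq i\e$, $e_2\simeq I$).

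First I would write out $G$ in the frame $\tilde s$ using the coordinate formula $\laa\psi,\psi'\raa=\overline{[\psi']}[\psi]$ together with $[X\cdot\varphi]=[X][\varphi]$ for $X\in Cl_0$. This gives
\begin{equation*}
G=\laa e_1\cdot e_2\cdot\varphi,\varphi\raa=\overline{[\varphi]}\,[e_1\cdot e_2]\,[\varphi]=\overline{g}\,(iI)\,g.
\end{equation*}
Then I would use that $g\in Spin(3,1)$, i.e. $H(g,g)=1$, which is the defining relation of $Spin(3,1)\subset Cl_0(3,1)\simeq\HC$; I expect this forces $\overline{g}=g^{-1}$ for the relevant (even) part, so that $G=g^{-1}(iI)g=i\,g^{-1}Ig$, matching the claimed formula. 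One has to be slightly careful about which conjugation/inverse identity holds on $\HC$ for elements of the spin group — the relation $H(g,g)=1$ with $H$ the $\C$-bilinear form $\sum g_j g_j'$ should give $\overline{g}g=g\overline{g}=H(g,g)\e=\e$ (up to checking the precise normalization against the definition of $\overline{\cdot}$ on $\HC$), and this is the one genuinely delicate verification.

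Finally I would check the target: one must verify $i\,g^{-1}Ig\in\Q=\{\xi\in\Im m\HC\mid H(\xi,\xi)=-1\}$, so that the map really lands in $\Q\subset\Im m\HC$ as stated. Since $\Phi(g)$ (the $SO(3,1)$ element associated to $g$ via \eqref{cubriente}) acts by $q\mapsto g q\widehat g^{-1}$, and $e_1\cdot e_2$ corresponds under $Cl_0(3,1)\simeq\HC$ to $iI$, conjugation by $g$ sends the plane spanned by $e_1,e_2$ to the tangent plane of the immersion; this is precisely why $\overline{g}(iI)g$ reproduces the Gauss map $u_1\cdot u_2$ of Lemma \ref{gauss-varphi}. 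The main obstacle is bookkeeping: reconciling the three conjugations in play ($\widehat{\cdot}$, $\overline{\cdot}$, and $\widehat{\overline{\cdot}}$ on $\HC$) and the left/right $Spin$-actions, so that $\overline g(iI)g$ cleanly becomes $i\,g^{-1}Ig$; once the identity $\overline g = g^{-1}$ on $Spin(3,1)$ is pinned down the rest is immediate. I would close by noting this is the analogue of the corresponding computations in \cite{bayard1,bayard_patty}.
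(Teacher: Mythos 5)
Your proposal is correct and matches the paper's (very terse) proof, which simply says that Lemma \ref{gauss map1} is the identity $G=\laa e_1\cdot e_2\cdot\varphi,\varphi\raa$ of Lemma \ref{gauss-varphi} written in a section of $\tilde Q$ above $(e_1,e_2)$; your fleshed-out computation $\overline g(iI)g=ig^{-1}Ig$ via $\overline g g=H(g,g)\e=\e$ is exactly what that amounts to. One tiny bookkeeping slip: for a product of two odd elements the convention gives $[e_1\cdot e_2]=[e_1]\widehat{[e_2]}$ rather than $[e_1][e_2]$, but since $\widehat I=I$ this makes no difference here.
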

\begin{proof}This is the identity given in Lemma \ref{gauss-varphi} \begin{equation*}
G=\laa e_1\cdot e_2\cdot\varphi,\varphi \raa
\end{equation*} written in a section of $\tilde{Q}$ above $(e_1,e_2).$
\end{proof}

\begin{lem} Denoting by $[\eta]\in\Omega^1(M,\HC),$ the $1-$form which represents $\eta$ in $\tilde{s},$ we have 
\begin{equation}\label{eta0}
[\eta]=dg\ g^{-1}=\theta_1 J+\theta_2\ iK,
\end{equation} where $\theta_1$ and $\theta_2$ are two complex $1-$forms on $M.$
\end{lem}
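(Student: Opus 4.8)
The plan is to establish the two equalities separately. For the first, I would start from the Killing-type equation $\nabla_X\varphi = \eta(X)\cdot\varphi$ (equation \eqref{killing}) and write everything in the parallel section $\tilde{s}$. Since $\tilde{s}$ is parallel, the spinorial connection $\nabla$ acts in coordinates simply as the exterior derivative on the $\HC$-valued function $g = [\varphi]$; thus $[\nabla_X\varphi] = dg(X)$. On the other hand, using the coordinate rule $[X\cdot\varphi] = [X][\varphi]$ for $X$ in the Clifford algebra acting on spinors, together with the fact that $\eta(X)\in Cl_0(TM\oplus E)$ (it is a sum of products $e_j\cdot B(X,e_j)$ of two vectors, hence even), we get $[\eta(X)\cdot\varphi] = [\eta(X)]\,g$. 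Equating gives $dg(X) = [\eta(X)]\,g$, i.e. $[\eta] = dg\,g^{-1}$, which is the first equality. (One should note $g$ takes values in $Spin(3,1)\subset\HC$ and is therefore invertible, so $dg\,g^{-1}$ makes sense.)

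For the second equality, the content is that $[\eta]$ takes values in the two-dimensional complex subspace $\C J \oplus \C iK \subset \HC$. Here I would use two facts. First, differentiating $H(\varphi,\varphi)=1$ and using the Killing equation gives $H(\eta(X)\cdot\varphi,\varphi)=0$; second, using the Dirac equation one checks (as in the proof of Proposition \ref{lema_princ}) that $H(\eta(X)\cdot\varphi, e_1\cdot e_2\cdot\varphi)=0$ as well. Since in the frame $\tilde{s}$ the spinor $\varphi$ is represented by $g\in Spin(3,1)$ and the pairing $H$ is $Spin(3,1)$-invariant, these orthogonality relations translate, after the right-translation by $g$ built into $\laa\cdot,\cdot\raa$ and $H$, into the statement that $[\eta(X)] = \,dg\,g^{-1}(X)$ has vanishing $\e$-component and vanishing $iI$-component (recalling $e_1\cdot e_2 \simeq iI$ in the identification $Cl_0(3,1)\simeq\HC$ of \eqref{elempar}). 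Combined with the explicit form $\eta(X) = -\tfrac12\sum_j \epsilon_j e_j\cdot B(X,e_j)$ with $B$ taking values in $E = \R e_3\oplus\R e_4$, so that $[\eta(X)]$ is a $\C$-combination of $[e_j\cdot e_3] = [e_j][e_3]$ and $[e_j\cdot e_4]=[e_j][e_4]$ — these products land in $\C J\oplus\C iK$ (e.g. $e_1\cdot e_3\simeq iI\cdot J = iK$, $e_2\cdot e_3 \simeq -J$, etc., up to the identifications $e_1\simeq i\e$, $e_2\simeq I$, $e_3\simeq J$, $e_4\simeq K$) — one reads off that $[\eta]$ is of the asserted form $\theta_1 J + \theta_2\,iK$ for complex $1$-forms $\theta_1,\theta_2$.

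Actually the cleanest route for the second equality is simply the direct route: plug $e_1\simeq i\e$, $e_2\simeq I$, $e_3\simeq J$, $e_4\simeq K$ and $B(e_i,e_j) = b_{ij}^3\, e_3 + b_{ij}^4\, e_4$ into $\eta(e_k) = \tfrac12(e_1\cdot B_{k1} - e_2\cdot B_{k2})$ and compute the four quaternions $[\eta(e_k)]$, $k=1,2$, using $I J = K$, $J K = I$, $K I = J$ and the squares $=-1$; every term will be a complex multiple of $J$ or of $iK$, giving the decomposition and identifying $\theta_1,\theta_2$ explicitly in terms of the components of $B$. I expect the main (though modest) obstacle to be bookkeeping: keeping consistent track of the identifications $\R^{3,1}\simeq\{ix_1\e + x_2 I + x_3 J + x_4 K\}$ versus $Cl_0(3,1)\simeq\HC$, and of the signs $\epsilon_1 = -1$, $\epsilon_2 = +1$, so that the imaginary factors land exactly on $J$ and on $iK$ rather than on $iJ$ or $K$; once the identifications are fixed the computation is routine.
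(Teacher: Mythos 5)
Your plan is correct and matches the paper's (one-line) argument: the first equality is \eqref{killing} written out in the parallel frame $\tilde{s}$, and the second follows by substituting the explicit form \eqref{sff} of $\eta$ and the identifications $[e_1]=i\e$, $[e_2]=I$, $[e_3]=J$, $[e_4]=K$. A small correction to your illustrative products: since $[e_j\cdot e_k]=[e_j]\widehat{[e_k]}$, one gets $[e_1\cdot e_3]=(i\e)J=iJ$ (not $iK$), $[e_1\cdot e_4]=iK$, $[e_2\cdot e_3]=IJ=K$ (not $-J$), $[e_2\cdot e_4]=IK=-J$; and your worry about the $i$'s landing on $J$ versus $iJ$, or $K$ versus $iK$, is a non-issue because $\C J=\C\, iJ$ and $\C\, iK=\C K$ as subspaces of $\HC$ — the only content of the second equality is that $[\eta]$ has no $\e$- or $I$-component, which holds since every product of one tangent with one normal $e_j$ lands in $\C J\oplus\C K$.
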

\begin{proof} This is \eqref{killing} in the parallel frame $\tilde{s},$ taking into account the special form \eqref{sff} of $\eta$ for the last equality.
\end{proof}

\begin{lem} The $1-$form $\tilde{\eta}:= \laa \eta\cdot\varphi,\varphi\raa$
satisfies 
\begin{equation*}
\tilde{\eta}=\frac{1}{2}G^{-1}dG= g^{-1}dg.
\end{equation*}
\end{lem}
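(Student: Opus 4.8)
The statement asserts two equalities: $\tilde\eta=\frac12 G^{-1}dG$ and $\tilde\eta=g^{-1}dg$. The plan is to verify the second equality by a direct coordinate computation in the parallel frame $\tilde s$, and then to deduce the first from Lemma \ref{gauss map1} together with the previous lemma on $[\eta]$.

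\textbf{The second equality.} First I would compute $\tilde\eta=\laa\eta\cdot\varphi,\varphi\raa$ in the parallel section $\tilde s$, using the notational conventions of Section \ref{twisted sec}: since $\eta(X)\in Cl_0(TM\oplus E)$ is represented by $[\eta(X)]\in\HC$ and $\varphi$ by $g=[\varphi]$, one has $[\eta(X)\cdot\varphi]=[\eta(X)]\,g$. By the definition \eqref{prod_escal_vector} of $\laa\cdot,\cdot\raa$ and the fact that the component of $\eta(X)\cdot\varphi$ is $[\eta(X)]g$ while that of $\varphi$ is $g$, we get
\begin{equation*}
\tilde\eta(X)=\laa\eta(X)\cdot\varphi,\varphi\raa=\overline{g}\,[\eta(X)]\,g.
\end{equation*}
Now invoke \eqref{eta0}: $[\eta]=dg\,g^{-1}$, so $[\eta(X)]=(dg)(X)\,g^{-1}$ and hence $\tilde\eta(X)=\overline g\,(dg)(X)\,g^{-1}\,g=\overline g\,(dg)(X)$. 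Since $g\in Spin(3,1)$, we have $H(g,g)=1$, which after unwinding the definitions of $H$ and of the $\HC$-conjugation says exactly $\overline g\,g=1$, i.e.\ $\overline g=g^{-1}$. Therefore $\tilde\eta=g^{-1}dg$, as claimed. (A small point to check carefully is whether $\overline{\,\cdot\,}$ in \eqref{prod_escal_vector} and the group-inverse on $Spin(3,1)$ agree; this is where the identity $\overline g g=1$ for $g\in Spin(3,1)$ is used, and it is the one genuinely nontrivial input — everything else is bookkeeping with the bracket notation $[\cdot]$.)

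\textbf{The first equality.} For $\tilde\eta=\frac12 G^{-1}dG$, I would start from $G=i\,g^{-1}Ig$ of Lemma \ref{gauss map1} and differentiate:
\begin{equation*}
dG=i\big(-g^{-1}(dg)\,g^{-1}Ig+g^{-1}I\,(dg)\big)=i\,g^{-1}\big(-(dg)g^{-1}I+I(dg)g^{-1}\big)g=g^{-1}\big[I,[\eta]\big]i\,g^{-1}\cdot g,
\end{equation*}
so that $G^{-1}dG=g^{-1}\big(i[I,[\eta]]\big)\,g\cdot$(up to reorganizing), hence $G^{-1}dG=g^{-1}\,i[I,[\eta]]\,g$ with $[I,[\eta]]=I[\eta]-[\eta]I$. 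Using \eqref{eta0}, $[\eta]=\theta_1 J+\theta_2\,iK$, a short quaternionic computation with $IJ=K$, $IK=-J$ gives $I[\eta]-[\eta]I=2(\theta_1 K-\theta_2\,iJ)=2i(\theta_1(-iK)+\ldots)$; more cleanly, one checks $\tfrac12 i[I,[\eta]]=[\eta]$ directly, since conjugation by $I$ on $\Im m\,\HC$ fixes $iI$ and acts as $-1$ on the span of $J$ and $iK$, so $\tfrac12(I[\eta]-[\eta]I)\,(\text{times }i)$ reproduces $\theta_1 J+\theta_2\,iK=[\eta]$. Thus $G^{-1}dG=g^{-1}\,2[\eta]\,g=2\,g^{-1}(dg\,g^{-1})g=2g^{-1}dg=2\tilde\eta$, which is the desired formula.

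\textbf{Main obstacle.} The computations are routine once the conventions are pinned down; the only place demanding care is matching the various ``inverses'' and ``conjugations'' — the $\HC$-conjugation $\overline{\,\cdot\,}$ used in \eqref{prod_escal_vector}, the group inverse in $Spin(3,1)\subset\HC$, and the behaviour of the complex structure (multiplication by $K$ on the right, versus Clifford action of $-e_1 e_2 e_3 e_4$). Verifying $\overline g g=1$ for $g\in Spin(3,1)$ and that conjugation by $I$ on $\Im m\,\HC$ has the claimed eigenspaces is what makes both identities drop out; I expect this to be the substantive step, with the rest being direct substitution of \eqref{eta0} and Lemma \ref{gauss map1}.
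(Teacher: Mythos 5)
Your treatment of the second equality $\tilde\eta=g^{-1}dg$ is correct and matches the paper's approach: write $\tilde\eta(X)=\overline g\,[\eta(X)]\,g$ in the parallel frame, insert $[\eta]=dg\,g^{-1}$ from \eqref{eta0}, and use $\overline g\,g=H(g,g)=1$ for $g\in Spin(3,1)$.

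For the first equality you take a different route from the paper. The paper simply cites Lemma \ref{derivada-gauss-varphi} (which already established $dG=2G\tilde\eta$, hence $\tfrac12 G^{-1}dG=\tilde\eta$); you instead re-derive it by differentiating $G=i\,g^{-1}Ig$ from Lemma \ref{gauss map1}. This is a legitimate alternative, and your starting line $dG=i\,g^{-1}[I,[\eta]]\,g$ is right, but the two intermediate claims that follow are both wrong, and they cancel only by coincidence. First, you assert $G^{-1}dG=g^{-1}\,i[I,[\eta]]\,g$; the $G^{-1}$ factor does not simply disappear. What is true is that $G^2=1$ for $G\in\Q$ (since $\overline G\,G=H(G,G)=-1$ and $\overline G=-G$), so $G^{-1}=G$ and
\begin{equation*}
G^{-1}dG=G\,dG=(i\,g^{-1}Ig)(i\,g^{-1}[I,[\eta]]g)=-\,g^{-1}\,I\,[I,[\eta]]\,g.
\end{equation*}
Second, you assert $\tfrac12\,i\,[I,[\eta]]=[\eta]$. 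This is false: a direct check with $[\eta]=\theta_1 J+\theta_2\,iK$ gives $\tfrac12\,i\,[I,[\eta]]=\theta_2 J+\theta_1\,iK$, which swaps $\theta_1\leftrightarrow\theta_2$. The correct identity coming from the eigenspace argument you invoke (conjugation by $I$ acts as $-1$ on $\C J\oplus\C iK$, so $I$ anticommutes with $[\eta]$, i.e.\ $[I,[\eta]]=2I[\eta]$) is
\begin{equation*}
-I\,[I,[\eta]]=-2I^2[\eta]=2[\eta],
\end{equation*}
which combined with the corrected expression for $G^{-1}dG$ yields $G^{-1}dG=2g^{-1}[\eta]g=2g^{-1}dg$ as desired. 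So your final conclusion is right, but the intermediate steps as written would not survive scrutiny; either fix them as above, or follow the paper and simply invoke Lemma \ref{derivada-gauss-varphi}.
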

\begin{proof} Writing $\tilde{\eta}$ in $\tilde{s}$ together with \eqref{eta0} imply that $\tilde{\eta}=g^{-1}dg.$ The other identity was given in Lemma \ref{derivada-gauss-varphi}.
\end{proof}

The properties \eqref{gauss phi} and \eqref{eta0} may be rewritten as follows (see similar results in \cite{bayard1,bayard_patty}):\\

\begin{lem}\label{lema p} Consider the projection 
\begin{eqnarray*}
p: Spin(3,1)\subset \HC & \longrightarrow & \Q\ \subset \Im m\HC \\
g & \longmapsto & i g^{-1} Ig
\end{eqnarray*}
as a $S^1_{\C}-$principal bundle, where the action of $S^1_{\C}$ on $Spin(3,1)$ is given by the multiplication on the left. It is equipped with the horizontal distribution given at every $g\in Spin(3,1)$ by \begin{equation*}\label{distribution}
\mathcal{H}_g:=d(R_{g^{-1}})^{-1}_g(\C J\oplus \C iK)\ \subset\ T_gSpin(3,1),
\end{equation*} where $R_{g^{-1}}$ stands for the right multiplication by $g^{-1}$ on $Spin(3,1).$ The distribution $(\mathcal{H}_g)_{g\in Spin(3,1)}$ is $H-$orthogonal to the fibers of $p$, and, for all $g\in Spin(3,1),$ $dp_g:\mathcal{H}_g\to T_{p(g)}\Q$ is an isomorphism which preserves $i$ and such that \begin{equation*}\label{norma p}
H(dp_g(u),dp_g(u))=-4H(u,u),
\end{equation*} for all $u\in\mathcal{H}_g.$ With these notations, we have 
\begin{equation}\label{gauss p}
G=p\circ g,
\end{equation} and the map $g:M\to Spin(3,1)$ appears to be a horizontal lift to $Spin(3,1)$ of the Gauss map $G:M\to \Q.$
\end{lem}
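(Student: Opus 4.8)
The plan is to carry out everything inside the model $Spin(3,1)=\{g\in\HC\mid H(g,g)=1\}$, relying on three elementary consequences of the quaternion relations: $q\overline{q}=\overline{q}q=H(q,q)\,\e$ for all $q\in\HC$, its polarization $q\overline{q'}+q'\overline{q}=2H(q,q')\,\e$, and the fact that $g\in Spin(3,1)$ satisfies $g^{-1}=\overline{g}$, hence $g\overline{g}=\e$. First I would identify the tangent spaces. Differentiating $H(g,g)=1$ gives $T_gSpin(3,1)=\{v\in\HC\mid H(v,g)=0\}$; for $w\in\Im m\HC$ one computes $H(wg,g)\,\e=\tfrac12\big(w(g\overline{g})+(g\overline{g})\overline{w}\big)=\tfrac12(w+\overline{w})=0$, so $(\Im m\HC)\,g\subseteq T_gSpin(3,1)$, and since both sides have real dimension $6$ they coincide. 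As $\Im m\HC=\C I\oplus(\C J\oplus\C K)$ and right multiplication by the invertible element $g$ is linear and bijective, this yields $T_gSpin(3,1)=\mathcal{V}_g\oplus\mathcal{H}_g$ with $\mathcal{V}_g=(\C I)g$ and $\mathcal{H}_g=d(R_{g^{-1}})_g^{-1}(\C J\oplus\C iK)=(\C J\oplus\C K)g$ (using that $d(R_{g^{-1}})_g$ is right multiplication by $g^{-1}$ and that $\C iK=\C K$ as subspaces). Finally one checks that the $p$-fibre through $g$ is exactly the $S^1_{\C}$-orbit $S^1_{\C}\,g$: since $p(hg)=ig^{-1}(h^{-1}Ih)g$, we have $p(hg)=p(g)$ iff $h$ centralizes $I$ iff $h\in S^1_{\C}$; and $T_e S^1_{\C}=\C I$, so $\mathcal{V}_g$ is the vertical space.

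The computational heart is the differential of $p$. Writing a tangent vector at $g$ as $u=wg$ with $w\in\Im m\HC$, the chain rule together with $\tfrac{d}{dt}g(t)^{-1}=-g^{-1}\dot g\,g^{-1}$ gives $dp_g(wg)=i\,g^{-1}[I,w]\,g$, where $[I,w]=Iw-wI$. A direct check on the basis shows $[I,\cdot]$ annihilates $\C I$ and carries $\C J\oplus\C K$ isomorphically onto itself, namely $[I,bJ+cK]=2bK-2cJ$. Hence $\ker dp_g=\mathcal{V}_g$, the restriction $dp_g|_{\mathcal{H}_g}$ is injective, and, both spaces having real dimension $4$, it is an isomorphism onto $T_{p(g)}\Q$; it commutes with the central scalar $i$, since $dp_g((iw)g)=i\,dp_g(wg)$, so it preserves the complex structures. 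For the norm scaling I would use $g\overline{g}=\e$ to get $H(u,u)\,\e=(wg)\overline{wg}=w\overline{w}=H(w,w)\,\e$, so $H(u,u)=b^2+c^2$, and likewise, from $p\overline{p}$ with $p=ig^{-1}[I,w]g$, that $H(dp_g(u),dp_g(u))\,\e=-g^{-1}[I,w]\overline{[I,w]}\,\overline{g^{-1}}=-H([I,w],[I,w])\,\e$, the minus sign being $i^2=-1$ and using $g\overline{g}=g^{-1}\overline{g^{-1}}=\e$; since $[I,w]=2bK-2cJ$ has $H([I,w],[I,w])=4(b^2+c^2)$, this gives $H(dp_g(u),dp_g(u))=-4H(u,u)$. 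The $H$-orthogonality of $\mathcal{H}_g$ and $\mathcal{V}_g$ is the same computation: for $u=wg\in\mathcal{H}_g$ and $v=\lambda Ig\in\mathcal{V}_g$, $H(u,v)\,\e$ is proportional to $wI+Iw$, which vanishes for $w\in\C J\oplus\C K$.

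Finally, $G=p\circ g$ is immediate from Lemma \ref{gauss map1}, which already writes $G=\laa e_1\cdot e_2\cdot\varphi,\varphi\raa=ig^{-1}Ig=p(g)$ in the parallel frame $\tilde{s}$. That $g$ is a horizontal lift of $G$ then reduces to $dg_x(X)\in\mathcal{H}_{g(x)}$ for all $X\in T_xM$, which is exactly \eqref{eta0}: there $dg\,g^{-1}=[\eta]=\theta_1 J+\theta_2\,iK\in\C J\oplus\C K$, so $dg_x(X)\in(\C J\oplus\C K)g(x)=\mathcal{H}_{g(x)}$. I expect the only genuine obstacle to be bookkeeping: keeping the central complex scalar $i$ (untouched by the quaternionic bar) separate from the quaternionic conjugation $\overline{\phantom{q}}$, not confusing the form $H$ on $\HC$ with its restriction to $\Im m\HC$, and using the real-dimension counts ($6$ for $Spin(3,1)$, $4$ for $\Q$) correctly to upgrade "contained/injective" to "equal/isomorphism".
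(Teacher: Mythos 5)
Your argument is correct, and since the paper does not prove this lemma itself but refers to analogous statements in \cite{bayard1,bayard_patty}, your proof supplies exactly the direct computation in $\HC$ that those references carry out. All the essential ingredients are present and correctly handled: $T_gSpin(3,1)=(\Im m\HC)\,g$ from $g\overline g=\e$ and polarization; the decomposition $\Im m\HC=\C I\oplus(\C J\oplus\C K)$ giving $\mathcal V_g\oplus\mathcal H_g$ after right translation by $g$; the formula $dp_g(wg)=i\,g^{-1}[I,w]\,g$; the computation $[I,bJ+cK]=2bK-2cJ$, which simultaneously yields $\ker dp_g=\mathcal V_g$, commutation with the central $i$, and the scaling $H(dp_g(u),dp_g(u))=-4H(u,u)$ via $g\overline g=\e$ and $\overline{[I,w]}=-[I,w]$; $H$-orthogonality from $wI+Iw=0$ on $\C J\oplus\C K$; $G=p\circ g$ directly from Lemma \ref{gauss map1} read in the parallel frame; and horizontality of $g$ from \eqref{eta0}. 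The one step you invoke without writing out is the identification of the $p$-fibre with the $S^1_{\C}$-orbit: it is worth recording the one-line verification $hI-Ih=2(h_4J-h_3K)$, which shows that the centralizer of $I$ in $Spin(3,1)$ is exactly $\{h_1\e+h_2I:\ h_1^2+h_2^2=1\}=S^1_{\C}$, and that the vertical tangent space $(\C I)g$ is indeed the tangent to that orbit.
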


Thus, from \eqref{gauss p}, we get \begin{equation*}
dG=dp\circ dg.
\end{equation*} Since $dp$ and $dG$ commute to the complex structures $i$ defined on $Spin(3,1), \Q$ and $M,$ so does $dg,$ and thus $g:M\to Spin(3,1)\subset \HC$ is a holomorphic map.

Using the identity \eqref{eta0}, the complex $1-$forms $\theta_1$ and $\theta_2$ are holomorphic, therefore there exists two holomorphic functions $f_1$ and $f_2$ such that
\begin{equation}\label{formas hol}
\theta_1=f_1 dz \hspace{0.3in}\mbox{and}\hspace{0.3in} \theta_2=f_2 dz,
\end{equation} where $z$ is a conformal paremeter of $(M,\mathcal{J}).$ We note that, $f_1$ and $f_2$ do not vanish simultaneously since $dG$ is assumed to be injective at every point.

The aim now is to show that the immersion $F:M\to \R^{3,1}$ induced by $\varphi$ is determined by the holomorphic functions $f_1$ and $f_2,$ and by the two smooth functions $h_1,h_2:M\to \R$ such that $$\vec{H}:=h_1 e_3+h_2 e_4,$$ the components of the mean curvature vector in the parallel frame $(e_3,e_4)$ of $E.$ We first observe that the immersion is determined by $g:M\to Spin(3,1)\subset\HC$ and by the orthonormal and parallel frame $(e_1,e_2)$ of $TM.$ \\

\begin{prop} The immersion $F:M\to \R^{3,1}$ is such that 
\begin{equation*}
dF(X)=g^{-1}\ (\omega_1(X)\ i\e+\omega_2(X)\ I)\ \widehat{g}
\end{equation*} for all $X\in TM,$ where $\omega_1,\omega_2:TM \to \R$ are the dual forms of $e_1$ and $e_2.$
\end{prop}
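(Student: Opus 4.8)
The plan is to use the spinor representation formula $\xi(X)=\laa X\cdot\varphi,\varphi\raa$ from Theorem \ref{principal}, combined with the description of $\varphi$ in the parallel spinorial frame $\tilde{s}$ as $\varphi=[\tilde{s},g]$ with $g:M\to Spin(3,1)\subset\HC$. Since $dF=\xi$, the immersion is obtained by integrating the $1$-form $X\mapsto\laa X\cdot\varphi,\varphi\raa$, and the whole point is to rewrite this $1$-form explicitly in terms of $g$ and the parallel tangent frame $(e_1,e_2)$, using the formula \eqref{prod_escal_vector} for $\laa\cdot,\cdot\raa$ in coordinates, namely $\laa\psi,\psi'\raa=\overline{[\psi']}[\psi]$.

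First I would compute, for $X\in TM$, the coordinate expression of $X\cdot\varphi$ in the frame $\tilde{s}$. Writing $X=\omega_1(X)e_1+\omega_2(X)e_2$ where $(\omega_1,\omega_2)$ is the frame dual to $(e_1,e_2)$, and recalling from the Notation paragraph in Section \ref{twisted sec} that in the frame $\tilde{s}$ (which lies above $(e_1,e_2,e_3,e_4)$) the odd Clifford elements $e_1,e_2$ are represented by $i\e,I\in\HC$ respectively, and that $[Y\cdot\varphi]=[Y]\widehat{[\varphi]}$ for $Y\in Cl_1(TM\oplus E)$, we get
\begin{equation*}
[X\cdot\varphi]=\bigl(\omega_1(X)\,i\e+\omega_2(X)\,I\bigr)\,\widehat{g}.
\end{equation*}
Then, applying $\laa X\cdot\varphi,\varphi\raa=\overline{[\varphi]}\,[X\cdot\varphi]=\overline{g}\,[X\cdot\varphi]$, one obtains
\begin{equation*}
\xi(X)=\overline{g}\,\bigl(\omega_1(X)\,i\e+\omega_2(X)\,I\bigr)\,\widehat{g}.
\end{equation*}
It remains to identify $\overline{g}$ with $g^{-1}$: since $g\in Spin(3,1)$ means $H(g,g)=1$, and this is exactly the condition characterizing the inverse in the spin group (the spinorial norm being $H$), we have $\overline{g}=g^{-1}$. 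Substituting gives precisely $dF(X)=g^{-1}\bigl(\omega_1(X)\,i\e+\omega_2(X)\,I\bigr)\widehat{g}$, which is the claimed formula.

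The main obstacle, such as it is, is bookkeeping: keeping straight the three conjugations on $\HC$ (the $\widehat{\ }$ coming from the even/odd decomposition of the Clifford algebra, the $\overline{\ }$ coming from the definition of $\laa\cdot,\cdot\raa$, and the relation between them via the identification \eqref{iden_espa} of $\R^{3,1}$ inside $\HC$), and verifying that $\overline{g}=g^{-1}$ on $Spin(3,1)$ with the correct placement of hats (one should check that the output indeed lands in the subspace $\{q=-\widehat{\overline q}\}\simeq\R^{3,1}$, which is guaranteed by part 1 of the Proposition preceding Theorem \ref{teorema_isometria}). I do not expect any genuinely hard step; this is a direct translation of the already-established representation formula into the trivializing parallel frame, analogous to the computations in \cite{bayard1,bayard_patty} referenced just before the statement.
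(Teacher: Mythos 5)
Your proof is correct and is essentially the paper's own argument: express $dF(X)=\laa X\cdot\varphi,\varphi\raa$ in coordinates via $\laa\psi,\psi'\raa=\overline{[\psi']}[\psi]$ and $[X\cdot\varphi]=[X]\widehat{[\varphi]}$, use $[e_1]=i\e$, $[e_2]=I$ in $\tilde{s}$, and identify $\overline{g}$ with $g^{-1}$ via $\overline{g}g=H(g,g)=1$ on $Spin(3,1)$. You spell out the step $\overline{g}=g^{-1}$ that the paper leaves implicit, which is a helpful clarification but not a different route.
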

\begin{proof} We have \begin{equation*}
dF(X)=\laa X\cdot\varphi,\varphi\raa=g^{-1}\ [X]\ \widehat{g},
\end{equation*} where $[X]\in\HC$ stands for the coordinates of $X\in Cl(TM\oplus E)$ in $\tilde{s}.$ Recalling that $[e_1]=i\e$ and $[e_2]=I$ in $\tilde{s},$ we have $[X]=X_1 i\e + X_2 I,$ where $X_1,X_2$ are the coordinates of $X\in TM$ in $(e_1,e_2).$
\end{proof}

In the following proposition, we precise how to recover the map $g$ and the frame $(e_1,e_2)$ from the holomorphic functions $f_1$ and $f_2$ and from the smooth functions $h_1$ and $h_2:$ \\

\begin{prop}\label{recover} \textit{1-} $g$ is determined by $f_1$ and $f_2,$ up to the multiplication on the right by a constant belonging to $Spin(3,1).$ 

\noindent \textit{2-} Define $\alpha_1,\alpha_2:M \to \C$ such that 
\begin{equation*}
e_1=\alpha_1 \hspace{0.3in}\mbox{and}\hspace{0.3in} e_2=\alpha_2
\end{equation*} in the parameter $z.$ The functions $\alpha_1,\alpha_2,f_1,f_2,h_1$ and $h_2$ are linked by 
\begin{equation}\label{rela func}
(\alpha_1 i\e+\alpha_2 I)(f_1 J+f_2 iK)=(h_1 J+ h_2 K).
\end{equation} In particular, if $f_1^2-f_2^2\neq 0,$ we get 
\begin{equation}\label{rela func 2}
\alpha_1 i\e +\alpha_2 I=-(h_1 J+ h_2 K)\frac{f_1 J+ f_2 iK}{f_1^2-f_2^2},
\end{equation} that is, the frame $(e_1,e_2)$ in the coordinates $z$ is determined by $f_1,f_2,h_3$ and $h_4.$
\end{prop}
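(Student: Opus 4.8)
The plan is to derive both claims directly from the structural lemmas already established, especially Lemma~\ref{lema p} and the relation $[\eta]=dg\,g^{-1}=\theta_1 J+\theta_2\,iK$ of \eqref{eta0}, together with the Killing-type equation \eqref{killing} and the Dirac equation. For part~\textit{1-}, the starting point is that $g:M\to Spin(3,1)\subset\HC$ satisfies the linear PDE $dg=(\theta_1 J+\theta_2\,iK)g=(f_1 J+f_2\,iK)\,g\,dz$, where I have used the holomorphic expressions \eqref{formas hol}. This is a first-order linear ODE-type system along $(M,\mathcal{J})$ whose coefficient $1$-form is entirely determined by $f_1$ and $f_2$. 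Since $M$ is simply connected and $g$ solves such a system with prescribed $dg\,g^{-1}$, the solution is unique once an initial value $g(z_0)\in Spin(3,1)$ is fixed; changing the initial value amounts precisely to multiplying $g$ on the right by a constant element of $Spin(3,1)$ (because the equation is written as $dg=(\cdots)g$, right-translations of $g$ by a constant preserve the solution set). Existence of a solution is not the issue here — it follows from the earlier construction — so the only content is this uniqueness-up-to-right-constant statement, which I would phrase as: two solutions $g,\tilde g$ of $dg\,g^{-1}=f_1 J\,dz+f_2\,iK\,dz$ satisfy $d(\tilde g^{-1}g)=\tilde g^{-1}(dg\,g^{-1}-d\tilde g\,\tilde g^{-1})g=0$, hence $\tilde g^{-1}g$ is a constant in $Spin(3,1)$.

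For part~\textit{2-}, the relation \eqref{rela func} should come from writing the Dirac equation $D\varphi=\vec H\cdot\varphi$ in the parallel frame $\tilde s$. Since $D\varphi=-e_1\cdot\nabla_{e_1}\varphi+e_2\cdot\nabla_{e_2}\varphi$ and $\nabla_X\varphi=\eta(X)\cdot\varphi$, the trace of $\eta$ against the frame, rewritten in $\tilde s$ using $[e_1]=i\e$, $[e_2]=I$, and the expressions $[\eta(e_j)]$ extracted from $[\eta]=f_1 J\,dz+f_2\,iK\,dz$ evaluated on the tangent vectors $e_1=\alpha_1$, $e_2=\alpha_2$ (in the conformal parameter $z$), must reproduce $[\vec H]=h_1 J+h_2 K$ after Clifford-multiplying through by the appropriate frame vectors. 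Concretely, $[\eta(X)]=(\alpha_1(X)\cdots)$; evaluating $[\eta]$ on a tangent vector and combining with the Clifford factor $[X]=\alpha_1 i\e+\alpha_2 I$ in the Dirac-trace yields exactly the product $(\alpha_1 i\e+\alpha_2 I)(f_1 J+f_2\,iK)$ on the left-hand side, and the mean-curvature term gives $h_1 J+h_2 K$ on the right. This is essentially a bookkeeping computation with the quaternion multiplication table $I^2=J^2=K^2=-1$, $IJ=K$, etc., done in $\HC$.

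Once \eqref{rela func} is in hand, \eqref{rela func 2} is pure algebra: one multiplies \eqref{rela func} on the right by the inverse of $f_1 J+f_2\,iK$ inside $\HC$. Computing $(f_1 J+f_2\,iK)(-f_1 J+f_2\,iK)=-f_1^2 J^2 + f_2^2 (iK)^2 + f_1 f_2(JiK - iKJ)$ — and noting $JiK=i(JK)=iI$ while $iKJ=i(KJ)=-iI$, so the cross terms give $2i f_1 f_2 I$; redoing the bracket carefully one finds the combination whose product with $f_1 J + f_2 iK$ is the scalar $f_1^2-f_2^2$ — gives the stated inverse up to the factor $f_1^2-f_2^2$, valid wherever $f_1^2-f_2^2\neq 0$. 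The remark that $(e_1,e_2)$ is then "determined by $f_1,f_2,h_1,h_2$" follows because the $\C$-valued functions $\alpha_1,\alpha_2$ are read off from the $i\e$- and $I$-components of the right-hand side of \eqref{rela func 2}, and $(e_1,e_2)$ is recovered from $(\alpha_1,\alpha_2)$ and the conformal parameter. The main obstacle I anticipate is not conceptual but organizational: correctly tracking the Clifford/quaternion identifications (the roles of $i$ as the ambient complex structure versus the quaternion $K$ acting on the right, and the identifications $e_1\mapsto i\e$, $e_2\mapsto I$, $e_3\mapsto J$, $e_4\mapsto K$) so that the Dirac equation collapses cleanly to \eqref{rela func}; a sign or a left/right-multiplication slip there would corrupt the whole identity, so I would verify the quaternion product $(\alpha_1 i\e+\alpha_2 I)(f_1 J+f_2 iK)$ component by component.
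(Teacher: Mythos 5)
Your overall strategy coincides with the paper's: part \textit{1-} is the uniqueness of a right-invariant linear ODE on $Spin(3,1),$ and part \textit{2-} is the Dirac equation written out in the parallel frame $\tilde{s}$ and followed by a quaternion inversion. Your treatment of part \textit{1-} (computing $d(\tilde g^{-1}g)=0$) is complete and correct. But there are two concrete loose ends in part \textit{2-} that must be fixed for the argument to close.

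First, the passage from $D\varphi=\vec H\cdot\varphi$ to \eqref{rela func} is not just ``Clifford-multiplying through by the frame vectors.'' Because $e_j\in Cl_1,$ one has $[e_j\cdot\nabla_{e_j}\varphi]=[e_j]\,\widehat{[\nabla_{e_j}\varphi]}$ and $[\vec H\cdot\varphi]=[\vec H]\,\widehat{[\varphi]},$ so the raw coordinate form of the Dirac equation carries hats on $[\nabla_{e_j}\varphi]$ and $[\varphi].$ The paper applies the $\C$-antilinear algebra homomorphism $\widehat{(\cdot)}$ to both sides, so that the equation becomes $-\widehat{[e_1]}[\nabla_{e_1}\varphi]+\widehat{[e_2]}[\nabla_{e_2}\varphi]=\widehat{[\vec H]}[\varphi].$ The essential point is then that $\widehat{[\vec H]}=h_1J+h_2K=[\vec H]$ (since $h_1,h_2$ are real and $J,K$ are real quaternions), while $\widehat{[e_1]}=-i\e$ and $\widehat{[e_2]}=I$; the sign flip on $[e_1]$ exactly cancels the minus sign in $D.$ Substituting $[\nabla_{e_j}\varphi]=[\eta(e_j)]g=\alpha_j(f_1J+f_2\,iK)g$ and cancelling $g$ on the right gives \eqref{rela func}. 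You flagged the sign/conjugation bookkeeping as a risk; it is exactly here that the hat-manipulation is indispensable, and the argument is not ``essentially the left-hand side is that product by inspection.''

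Second, your proposed inverse of $f_1J+f_2\,iK$ is wrong: you compute $(f_1J+f_2\,iK)(-f_1J+f_2\,iK)=f_1^2+f_2^2+2if_1f_2I,$ which is not scalar, and then defer. The correct ingredient, which the paper states explicitly, is that for $\xi\in\HC$ the quaternion conjugate $\overline{\xi}$ (which flips the sign of the $I,J,K$ coefficients) satisfies $\overline{\xi}\xi=H(\xi,\xi)\e,$ so $\xi^{-1}=\overline{\xi}/H(\xi,\xi)$ whenever $H(\xi,\xi)\neq 0.$ Here $\overline{f_1J+f_2\,iK}=-(f_1J+f_2\,iK),$ and $H(f_1J+f_2\,iK,f_1J+f_2\,iK)=f_1^2+(if_2)^2=f_1^2-f_2^2,$ which yields \eqref{rela func 2} directly. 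Your candidate $-f_1J+f_2\,iK$ is not the quaternion conjugate, which is why your cross terms fail to cancel.
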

\begin{proof} \textit{1-} The solution $g$ of the equation $dg\ g^{-1}=[\eta]$ is unique, up to multiplication on the right by a constant belonging to $Spin(3,1).$ \\
\textit{2-} In $\tilde{s},$ the Dirac equation $D\varphi=\vec{H}\cdot\varphi$ is given by \begin{equation*}
-\widehat{[e_1]}[\nabla_{e_1}\varphi]+\widehat{[e_2]}[\nabla_{e_2}\varphi]=\widehat{[\vec{H}]}[\varphi];
\end{equation*} since $dg(X)=[\nabla_X\varphi]=[\eta(X)]g,$ we get \begin{equation*}
i\e\ dg(e_1)g^{-1}+I\ dg(e_2)g^{-1}=h_1 J+h_2 K,
\end{equation*} using \eqref{eta0} and \eqref{formas hol} we have $dg(e_1)g^{-1}=\alpha_1(f_1 J+f_2 iK)$ and $dg(e_2)g^{-1}=\alpha_2(f_1 J+f_2 iK)$ that implies \eqref{rela func}. Equation \eqref{rela func 2} is a consequence of \eqref{rela func}, together with the following observation: $\xi\in\HC$ is invertible if and only if $H(\xi,\xi)=\overline{\xi}\xi\neq 0;$ its inverse is then $\xi^{-1}=\frac{\overline{\xi}}{H(\xi,\xi)}.$
\end{proof}

\begin{remar} The complex numbers $\alpha_1$ and $\alpha_2,$ considered as real vector fields on $M,$ are independient and satisfy $[\alpha_1,\alpha_2]=0:$ since the metric on $M$ is flat, there is a local diffeomorphism $\psi:\R^2\to M$ such that $e_1=\frac{\partial \psi}{\partial x}$ and $e_2=\frac{\partial \psi}{\partial y}.$
\end{remar}

The interpretation of the condition $f_1^2-f_2^2=0$ is the following: using \eqref{eta0} and the identities given in Lemma \ref{lema p} we get 
\begin{equation}\label{fns}
G^*H=H(dG,dG)=-4H(dg,dg)=-4(f_1^2-f_2^2)dz^2;
\end{equation} thus, if $f_1^2-f_2^2=0$ in $x\in M,$ $dG_x(T_xM)$ belongs to the union of two complex lines through $G(x)$ in the Grassmannian $\Q;$ in particular, the osculador space in $x$ is degenerate (\textit{i.e.} the first normal space in $x$ is $1-$dimensional); a similar and more detailed description for spacelike surfaces is given in \cite[Section 6.2.3]{bayard1}.

We will gather the previous results to construct flat timelike immersions with flat normal bundle from initial data. \\

\begin{coro}\label{flat0} Let $(U,z)$ be a simply connected domain in $\C,$ and consider 
\begin{equation*}
\theta_1=f_1 dz \hspace{0.2in}\mbox{and}\hspace{0.2in} \theta_2=f_2 dz
\end{equation*} where $f_1,f_2:U\to C$ are two holomorphic functions such that  $f_1^2-f_2^2\neq 0.$  Suppose that $h_1,h_2:U\to\R$ are smooth functions such that 
\begin{equation}\label{campos0}
\alpha_1:=-i\frac{h_1f_1+h_2f_2}{f_1^2-f_2^2} \hspace{0.2in}\mbox{and}\hspace{0.2in} \alpha_2:=\frac{h_2f_1-h_1f_2}{f_1^2-f_2^2}
\end{equation} considered as real vector fields on $U,$ are independent at every point and satisfy $[\alpha_1,\alpha_2]=0.$ Then, if  $g:U \to Spin(3,1)\subset\HC$ is a map solving 
\begin{equation}\label{componente0}
dg\ g^{-1}=\theta_1 J +\theta_2\ iK,
\end{equation} and if we set 
\begin{equation}\label{xi 0}
\xi:=g^{-1}\ (\omega_1\ i\e+\omega_2\ I)\ \widehat{g}
\end{equation} where $\omega_1,\omega_2:TU\to\R$ are the dual $1-$forms of $\alpha_1,\alpha_2\in\Gamma(TU),$ the function $F=\int\xi:U\to\R^{3,1}$ defines a timelike isometric immersion with $K=K_N=0.$ 

Reciprocally, the isometric immersion of a timelike surface $M$ in $\R^{3,1}$ such that $K=K_N=0,$ with regular Gauss map and whose osculating spaces are everywhere not degenerate, are locally of this form.
\end{coro}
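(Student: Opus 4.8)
The plan is to assemble Corollary~\ref{flat0} as essentially a bookkeeping consequence of Theorem~\ref{principal}, Proposition~\ref{recover}, and the formula \eqref{fns}; the content has already been developed in the preceding lemmas, and the corollary is the ``construction from data'' repackaging. I will prove the two directions separately.

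\emph{Sufficiency.} Given the holomorphic functions $f_1,f_2$ with $f_1^2-f_2^2\neq 0$ and the smooth functions $h_1,h_2$ producing, via \eqref{campos0}, commuting independent vector fields $\alpha_1,\alpha_2$, I first note that the equation \eqref{componente0} for $g$ is integrable: its compatibility condition is $d[\eta]=[\eta]\wedge[\eta]$, and since $[\eta]=\theta_1J+\theta_2\,iK$ with $\theta_1,\theta_2$ holomorphic $1$-forms on the simply connected domain $U$, both $d[\eta]=0$ and $[\eta]\wedge[\eta]=0$ hold (the wedge vanishes because $\theta_i\wedge\theta_j=0$ for $1$-forms that are multiples of $dz$); so $g:U\to Spin(3,1)$ exists and is unique up to right multiplication by a constant. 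Then one checks that $\varphi:=[\tilde s,g]$, for $\tilde s$ a parallel section of the (flat) bundle $\tilde Q$ over $U$, satisfies $\nabla_X\varphi=\eta(X)\cdot\varphi$ with $\eta$ of the form \eqref{sff} for the bilinear map $B$ whose components are read off from $\theta_1,\theta_2$, and with $H(\varphi,\varphi)=\overline{g}g=1$; tracing gives the Dirac equation $D\varphi=\vec H\cdot\varphi$, where the relation \eqref{rela func} (equivalently \eqref{campos0}, which is exactly \eqref{rela func 2} solved for $\alpha_1,\alpha_2$) forces $\vec H=h_1e_3+h_2e_4$. Theorem~\ref{principal} then yields that $F=\int\xi$ with $\xi(X)=\laa X\cdot\varphi,\varphi\raa$ is an isometric immersion, and the Proposition preceding this corollary identifies this $\xi$ with the expression \eqref{xi 0} once $(e_1,e_2)$ is taken to be the parallel frame dual to $\omega_1,\omega_2$; the closedness of $\xi$ (needed for $\int\xi$ to make sense) is part of Theorem~\ref{principal}. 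Since the connection on $\tilde Q$ was flat by construction, $K=K_N=0$, and the immersion is timelike because $M$ carries its $(1,1)$ metric. The one point requiring a small remark is that the metric induced by $F$ is the one for which $(\alpha_1,\alpha_2)$ — i.e. $(e_1,e_2)$ — is orthonormal of the right signature; this follows from Theorem~\ref{teorema_isometria}.

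\emph{Necessity.} Conversely, start from a timelike $M$ in $\R^{3,1}$ with $K=K_N=0$, regular Gauss map, and everywhere non-degenerate osculating spaces. Locally $M$ is simply connected, so by Theorem~\ref{principal} the immersion comes from a spinor field $\varphi$ with $D\varphi=\vec H\cdot\varphi$, $H(\varphi,\varphi)=1$; the flatness of $TM\oplus E$ gives a global parallel section $\tilde s$ of $\tilde Q$ and hence $g=[\varphi]:M\to Spin(3,1)$. Lemmas~\ref{gauss map1}--\ref{lema p} give $[\eta]=dg\,g^{-1}=\theta_1J+\theta_2\,iK$ and $G=p\circ g$ with $g$ a horizontal lift, hence $g$ holomorphic for the complex structure $\mathcal J$ induced by $G$ via \eqref{complex}; then $\theta_1,\theta_2$ are holomorphic, so $\theta_i=f_i\,dz$ for holomorphic $f_i$ in a conformal parameter $z$. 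Formula \eqref{fns} reads $G^*H=-4(f_1^2-f_2^2)dz^2$, and the non-degeneracy of the osculating (first normal) space is precisely the condition $f_1^2-f_2^2\neq 0$, while regularity of $G$ prevents $f_1,f_2$ from vanishing simultaneously. Writing $\vec H=h_1e_3+h_2e_4$ in the parallel frame, Proposition~\ref{recover} part~2 gives \eqref{rela func 2}, which upon expanding is exactly \eqref{campos0}; the vector fields $\alpha_1,\alpha_2$ are independent and commute by the Remark following Proposition~\ref{recover}. Thus $M$ is locally of the asserted form, with $g$ and $\xi$ recovered through \eqref{componente0} and \eqref{xi 0}.

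\emph{Main obstacle.} Essentially every step is a transcription of results already proved; the only genuine verification is the \emph{sufficiency} direction's claim that the constructed $\varphi$ really satisfies a Dirac equation with the prescribed $\vec H$ \emph{and} that the induced metric and normal bundle are the intended ones — i.e. that going ``backwards'' through Proposition~\ref{recover} produces consistent data. Concretely, one must check that with $\alpha_1,\alpha_2$ defined by \eqref{campos0} the identity \eqref{rela func} holds (this is just the inversion $\xi^{-1}=\overline\xi/H(\xi,\xi)$ applied to $\xi=f_1J+f_2\,iK$, whose norm is $f_1^2-f_2^2$), and then that $i\e\,dg(e_1)g^{-1}+I\,dg(e_2)g^{-1}=h_1J+h_2K$, which is precisely the Dirac equation written in $\tilde s$. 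These are short quaternionic computations; I do not expect any conceptual difficulty beyond keeping track of the left/right multiplications and the hat conjugation.
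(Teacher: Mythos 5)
Your proof is correct and follows essentially the same route as the paper's own argument: both build the flat data $(U\times\R^2,\ (e_1,e_2)=(\alpha_1,\alpha_2),\ \tilde{s})$, solve \eqref{componente0} via the Maurer--Cartan integrability condition, read off from \eqref{rela func 2} that $\varphi=[\tilde{s},g]$ is a Dirac solution with $\vec{H}=h_1e_3+h_2e_4$, apply Theorem~\ref{principal} and Theorem~\ref{teorema_isometria}, and recover the converse from Lemmas~\ref{gauss map1}--\ref{lema p}, Proposition~\ref{recover}, and \eqref{fns}. The one minor variant is that you conclude $K=K_N=0$ directly from the flatness of the constructed metric and normal connection, while the paper instead infers it from the holomorphy of the Gauss map $G=ig^{-1}Ig$ together with Proposition~\ref{gauss0}; both arguments are valid.
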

\begin{proof}
We consider $E=U\times \R^2$ the trivial vector bundle on $U$ and we denote by $(e_3,e_4)$ the canonical basis of $\R^2.$ Let us define $s=(e_1,e_2,e_3,e_4)$ where $e_1=\alpha_1$ and $e_2=\alpha_2$ in $\R^2\simeq\C,$ and let us consider the metric on $U$ such that $(e_1,e_2)$ is a orthonormal frame; this metric is flat and the frame $(e_1,e_2)$ is parallel since $[e_1,e_2]=0$ by hypothesis. Let $\tilde{s}$ be a section of the trivial bundle $\tilde{Q}\to U$ such that $\pi(\tilde{s})=s,$ where $\pi:\tilde{Q}=S^1_{\C}\times U\to (SO(1,1)\times SO(2))\times U$ is the natural projection. We consider $g:U \to Spin(3,1)\subset\HC$ the unique solution, up to the natural right action of $Spin(3,1),$ of the equation \eqref{componente0}:
this equation is solvable since $\eta':=\theta_1 J +\theta_2\ iK$ satisfies the structure equation $d\eta'(X,Y)-[\eta'(X),\eta'(Y)]=0,$ for all $X,Y\in\Gamma(TU).$ The definition \eqref{campos0} of $\alpha_1$ and $\alpha_2$ is equivalent to \eqref{rela func 2}, which traduces that $\varphi:=[\tilde{s},g]\in \Sigma=\tilde{Q}\times\HC/\rho$ is a solution of the Dirac equation $D\varphi=\vec{H}\cdot\varphi$ where $\vec{H}:=h_1e_3+h_2e_4$ (see the proof of Proposition \ref{recover}). Moreover, setting $\omega_1,\omega_2$ for the dual $1-$forms of $e_1,e_2\in\Gamma(TU),$  the $1-$form $\xi,$ given in \eqref{xi 0},
is such that $\xi(X)=\laa X\cdot\varphi,\varphi\raa;$ thus $\xi$ is closed, and a primitive of $\xi$ defines a timelike isometric immersion in $\R^{3,1}\subset \HC$ with induced metric $-\omega_1^2+\omega_2^2.$ Since the Gauss map of the immersion is $G=i\ g^{-1}Ig$ (Lemma \ref{gauss map1}) and since $g$ is a holomorphic map (by \eqref{componente0}), we get that $G$ is a holomorphic map, and thus that $K=K_N=0$ (Proposition \ref{gauss0}).
\end{proof}

\begin{remar}\label{flat1} A flat timelike immersion with flat normal bundle and regular Gauss map, and whose osculating spaces are everywhere not degenerate (\textit{i.e.} such that $G^*H\neq 0$ at every point), is determined by two holomorphic functions $f_1,f_2:U\to \C$ such that $f_1^2-f_2^2\neq 0$ on $U$ and by two smooth functions $h_1,h_2:U\to\R$ such that the two complex numbers $\alpha_1$ and $\alpha_2$ defined by \eqref{campos0}, considered as real vector fields, are independent at every point and such that $[\alpha_1,\alpha_2]=0$ on $U.$
\end{remar}

Considering further a holomorphic map $h:U\to\C$ such that $h^2=f_1^2-f_2^2,$ and setting $z'$ for the parameter such that $dz'=h(z)dz,$ we have 
\begin{equation*}
g^*H=H(dg.dg)=H(dgg^{-1},dgg^{-1})=(f_1^2-f_2^2)dz^2=dz'^2,
\end{equation*} and thus, in $z',$ 
\begin{equation}\label{nueva funcion}
g'g^{-1}=\cosh \psi\ J+\sinh \psi\ iK
\end{equation} for some holomorphic function $\psi:U'\to\C.$ The parameter $z'$ may be interpreted as the complex arc length of the holomorphic curve $g:U\to Spin(3,1),$ and the holomorphic function $\psi$ as the complex angle of $g'$ in the trivialization $TSpin(3,1)=Spin(3,1)\times T_1Spin(3,1).$ Observe that, from the definition \eqref{nueva funcion} of $\psi,$ the derivative $\psi'$ may be interpreted as the complex geodesic curvature of the holomorphic curve $g:U\to Spin(3,1).$ The immersion thus only depends on the single holomorphic function $\psi,$ instead of the two holomorphic functions $f_1$ and $f_2.$ Moreover, the two relations in \eqref{campos0} then simplify to 
\begin{equation}\label{campos 1}
\alpha_1=-i(h_1\cosh\psi+h_2\sinh\psi) \hspace{0.2in}\mbox{and}\hspace{0.2in}
\alpha_2=h_2\cosh\psi-h_1\sinh\psi.
\end{equation}
Note that the new parameter $z'$ may be only locally defined, since the map $z\to z'$ may be not one-to-one in general.\\

\begin{coro}\label{flat2} Let $U\subset \C$ be a simply connected domain, and let $\psi:U\to\C$ be a holomorphic function. Suppose that $h_1,h_2:U\to \R$ are smooth functions such that $\alpha_1$ and $\alpha_2,$ real vector fields defined by \eqref{campos 1}, are independet at every point and satisfy $[\alpha_1,\alpha_2]=0$ on $U.$ Then, if $g:U\to Spin(3,1)\subset \HC$ is a holomorphic map solving 
\begin{equation*}
g'g^{-1}=\cosh \psi\ J+\sinh \psi\ iK,
\end{equation*} and if we set 
\begin{equation*}
\xi:=g^{-1}\ (\omega_1\ i\e+\omega_2\ I)\ \widehat{g}
\end{equation*} where $\omega_1,\omega_2:TU\to\R$ are the dual $1-$forms of $\alpha_1,\alpha_2\in\Gamma(TU),$ the function $F=\int\xi:U\to\R^{3,1}$ defines a timelike isometric immersion with $K=K_N=0.$ 

Reciprocally, the isometric immersion of a timelike surface $M$ in $\R^{3,1}$ such that $K=K_N=0,$ with regular Gauss map and whose osculating spaces are everywhere not degenerate, are locally of this form.
\end{coro}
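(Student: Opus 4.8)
The plan is to obtain Corollary \ref{flat2} as a reparametrization of Corollary \ref{flat0}. The direct implication has already essentially been assembled in the discussion preceding the statement: given the holomorphic function $\psi$ and the smooth functions $h_1,h_2$, one introduces $f_1:=\cosh\psi$ and $f_2:=\sinh\psi$, so that $f_1^2-f_2^2=1\neq 0$ and the two relations \eqref{campos 1} become exactly the relations \eqref{campos0} for this choice of $f_1,f_2$. Thus the hypotheses of Corollary \ref{flat0} are satisfied with $z'$ in place of $z$, and the conclusion there—that $F=\int\xi:U\to\R^{3,1}$, with $\xi=g^{-1}(\omega_1\,i\e+\omega_2\,I)\,\widehat g$ and $g$ solving $g'g^{-1}=\cosh\psi\,J+\sinh\psi\,iK$, is a timelike isometric immersion with $K=K_N=0$—is precisely the assertion to be proved. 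So the forward direction requires essentially no new computation: one just checks that \eqref{campos 1} is the specialization of \eqref{campos0} and invokes Corollary \ref{flat0}.

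For the converse, I would start from an isometric immersion of a flat timelike surface $M$ with flat normal bundle, regular Gauss map, and everywhere non-degenerate osculating spaces. By the converse part of Corollary \ref{flat0} (equivalently Remark \ref{flat1}), locally the immersion is described by holomorphic $f_1,f_2:U\to\C$ with $f_1^2-f_2^2\neq 0$ and smooth $h_1,h_2$. Then I would pick a holomorphic square root $h$ of $f_1^2-f_2^2$ on a simply connected neighborhood (possible since $f_1^2-f_2^2$ is nowhere zero and $U$ is simply connected), set $dz'=h(z)\,dz$ to define the new parameter $z'$ locally, and use \eqref{fns} to see that in this parameter $g^*H=dz'^2$. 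Since $g:U\to Spin(3,1)$ is holomorphic and its complex speed has unit $H$-norm in $z'$, the derivative $g'=g'(z')$ satisfies $H(g'g^{-1},g'g^{-1})=1$, and because $g'g^{-1}$ lies in the horizontal space $\C J\oplus\C iK$ (this is \eqref{eta0}), it can be written as $\cosh\psi\,J+\sinh\psi\,iK$ for a holomorphic function $\psi$; the relations \eqref{campos 1} for $\alpha_1,\alpha_2$ in the parameter $z'$ then follow from \eqref{rela func 2} exactly as \eqref{campos0} did, after dividing by $h$. This shows the immersion is locally of the stated form.

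The main obstacle—though it is more a matter of care than of depth—is the parameter change $z\mapsto z'$: the holomorphic $1$-form $h(z)\,dz$ need not be exact globally on $U$ and $z\mapsto z'$ need not be injective, so $z'$ is only a local coordinate, as already flagged in the remark preceding Corollary \ref{flat2}. Consequently the converse statement must be read locally, which is exactly how it is phrased. One should also verify that the new data $(\psi,h_1,h_2)$ in the parameter $z'$ still satisfy the compatibility hypotheses ($\alpha_1,\alpha_2$ independent with $[\alpha_1,\alpha_2]=0$): independence is invariant under the change of parameter, and the commutation $[\alpha_1,\alpha_2]=0$ is equivalent to the flatness of the metric together with parallelism of $(e_1,e_2)$, which is intrinsic and hence unaffected. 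With these points dispatched the corollary follows by combining the reparametrization with Corollary \ref{flat0}.
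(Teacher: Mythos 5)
Your proposal is correct and follows essentially the same route as the paper: the paper itself derives Corollary \ref{flat2} from Corollary \ref{flat0} via the specialization $f_1=\cosh\psi$, $f_2=\sinh\psi$ for the forward direction and, for the converse, via the holomorphic square root $h$ of $f_1^2-f_2^2$ and the arc-length parameter $z'$ with $dz'=h\,dz$ so that $g'g^{-1}$ becomes $\cosh\psi\,J+\sinh\psi\,iK$. (One tiny imprecision: \eqref{fns} computes $G^*H=-4(f_1^2-f_2^2)dz^2$, whereas what you actually want is $g^*H=H(dg\,g^{-1},dg\,g^{-1})=(f_1^2-f_2^2)dz^2=dz'^2$, a separate identity the paper states just before the corollary; the discrepancy is only the constant $-4$ and does not affect the argument.)
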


\section{Flat timelike surfaces in the De Sitter space}\label{app-4}
In this section, using spinors we deduce a result of Aledo, G\'alvez and Mira given in \cite[Corollary 5.1]{AGM} concerning the conformal representation of a flat timelike surface in three-dimensional De Sitter space.

Keeping the notation of Section \ref{preliminares}, we consider the isomorphism of algebras 
\begin{eqnarray*}
A: \HC & \longrightarrow & M_2(\C) \\
q=q_1 \e + q_2 I + q_3 J + q_4 K & \longmapsto & A(q)=\begin{pmatrix} q_1+iq_2 & q_3+iq_4 \\ -q_3+iq_4 & q_1-iq_2 \end{pmatrix}.
\end{eqnarray*} We note the following properties: 
\begin{equation}\label{prop iso}
A(\widehat{\overline{q}})=A(q)^* \hspace{0.2in}\mbox{and}\hspace{0.2in} H(q,q)=\det(A(q))
\end{equation} for all $q\in \HC,$ where $A(q)^*$ is the conjugate transpose of $A(q).$ Using \eqref{prop iso}, we get an identification \begin{equation*}
\R^{3,1}=\{ \xi\in\HC \mid \widehat{\overline{\xi}}=-\xi\}\simeq iHerm(2),
\end{equation*} where the metric $\la\cdot,\cdot\ra$ of $\R^{3,1}$ identifies with $\det$ defined on $iHerm(2);$ moreover, the De Sitter space $\mathbb{S}^{2,1}\subset \R^{3,1}$ (defined in \eqref{de sitter}) is described as \begin{equation*}
\mathbb{S}^{2,1}=\left\lbrace B\begin{pmatrix} 0 & i \\ i & 0 \end{pmatrix}B^*\ \mid\ B\in Sl_2(\C) \right\rbrace\ \subset\ iHerm(2).
\end{equation*}

\begin{coro}\label{flat3} Let $M$ be a Riemann surface, $B:M \to Sl_2(\C)$ be a holomorphic map such that there exists $\theta,\omega$ nowhere vanishing holomorphic $1-$forms that satisfy $$B^{-1}dB=\begin{pmatrix} 0 & \theta \\ \omega & 0 \end{pmatrix}.$$ Assume moreover that $\Im m(\frac{\omega}{\theta})\neq 0.$ Then $$F:=B\begin{pmatrix} 0 & i \\ i & 0 \end{pmatrix}B^*:M \longrightarrow \mathbb{S}^{2,1}$$ defines, with the induced metric, a flat timelike isometric immersion. 

Conversely, an isometric immersion of a simply connected flat timelike surface $M$ in the De Sitter space  may be described as above. 
\end{coro}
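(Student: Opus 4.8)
The plan is to derive Corollary \ref{flat3} from Corollary \ref{flat2} (equivalently Corollary \ref{flat0}) by transporting everything through the algebra isomorphism $A:\HC\to M_2(\C)$. First I would set up the dictionary: under $A$, the sub-algebra $Cl_0(3,1)\simeq\HC$ and its subgroup $Spin(3,1)=\{q\mid H(q,q)=1\}$ correspond, via the second identity in \eqref{prop iso}, to $Sl_2(\C)$; the map $g\mapsto B:=A(g)$ is thus a bijection between holomorphic maps $M\to Spin(3,1)$ and holomorphic maps $M\to Sl_2(\C)$. I would then translate the structural equation \eqref{componente0}, $dg\,g^{-1}=\theta_1 J+\theta_2\,iK$, into the $M_2(\C)$ picture: $A(J)=\begin{pmatrix}0&1\\-1&0\end{pmatrix}$ and $A(iK)=\begin{pmatrix}0&i\\i&0\end{pmatrix}$, so $A(\theta_1 J+\theta_2\,iK)=\begin{pmatrix}0&\theta_1+i\theta_2\\-\theta_1+i\theta_2&0\end{pmatrix}$, and hence $B^{-1}dB=\begin{pmatrix}0&\theta\\\omega&0\end{pmatrix}$ with $\theta=\theta_1+i\theta_2$, $\omega=-\theta_1+i\theta_2$ (note $dg\,g^{-1}$ versus $g^{-1}dg$ is harmless because this matrix is off-diagonal and the two differ by conjugation by $g$; I should be careful to record exactly which convention matches, but it only renames $\theta,\omega$). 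Conversely $f_1=\theta_1/dz=\tfrac12(\omega+\theta)/dz$ up to the obvious factor, and the relation $f_1^2-f_2^2=(\theta_1+i\theta_2)(\theta_1-i\theta_2)\cdot(\text{const})$ shows that the nondegeneracy hypothesis $f_1^2-f_2^2\neq0$ of Corollary \ref{flat0} becomes ``$\theta$ and $\omega$ nowhere vanishing''; I would check that $\Im m(\omega/\theta)\neq0$ is precisely the condition that the two real vector fields $\alpha_1,\alpha_2$ built from the data are $\R$-linearly independent, which is the remaining hypothesis of Corollary \ref{flat0}.

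Next I would reduce to the case where the surface sits in $\mathbb{S}^{2,1}$ rather than in $\R^{3,1}$. Here the point is Proposition \ref{inm reduc}(2): the immersion lies in $\mathbb{S}^{2,1}$ exactly when $\vec H=He_3-e_4$ and $F=\laa e_4\cdot\varphi,\varphi\raa$. In the parallel frame $\tilde s$ this says $h_1=H$, $h_2=-1$, and the immersion $F$ itself, rather than a primitive $\int\xi$, is given directly by $\laa e_4\cdot\varphi,\varphi\raa=g^{-1}[e_4]\widehat g=g^{-1}K\widehat g$; translating through $A$, using $A(K)=\begin{pmatrix}0&i\\i&0\end{pmatrix}$ and $A(\widehat{\bar g})=A(g)^*=B^*$ (here I must be careful: $[e_4]=K$ and we need $\widehat{[\varphi]}$ on the right, i.e.\ $\widehat g$, whose image under $A$ I would compute and match to $B^*$ up to the appropriate conjugation), gives exactly $F=B\begin{pmatrix}0&i\\i&0\end{pmatrix}B^*$, which automatically takes values in the stated model $\mathbb{S}^{2,1}=\{B\begin{pmatrix}0&i\\i&0\end{pmatrix}B^*\mid B\in Sl_2(\C)\}$. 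I would then specialize \eqref{campos0} (or \eqref{campos 1}) with $h_1=H$, $h_2=-1$ — these become explicit formulas for $\alpha_1,\alpha_2$ in terms of $\theta,\omega$ and the single real function $H$ — and observe that, given $B$ (equivalently $\theta,\omega$), the condition that $\alpha_1,\alpha_2$ be independent and commute is automatically arranged by choosing the conformal parameter coming from the induced metric; this is where the ``$\Im m(\omega/\theta)\neq0$'' hypothesis does its work, guaranteeing the induced metric is nondegenerate of Lorentzian signature, i.e.\ the immersion is genuinely timelike. Applying Corollary \ref{flat0}/\ref{flat2} then yields that $F$ is a flat timelike isometric immersion with $K=K_N=0$.

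For the converse, I would run the argument backwards: given a simply connected flat timelike surface $M\hookrightarrow\mathbb{S}^{2,1}\subset\R^{3,1}$, Theorem \ref{principal} together with Proposition \ref{inm reduc} produces a spinor field $\varphi$ with $\vec H=He_3-e_4$ and $F=\laa e_4\cdot\varphi,\varphi\raa$; flatness of $TM$ and $E$ gives (as in Section \ref{app-3}) the global parallel frame $\tilde s$ and the holomorphic map $g=[\varphi]:M\to Spin(3,1)$ with $dg\,g^{-1}=\theta_1 J+\theta_2\,iK$ for holomorphic $1$-forms $\theta_1,\theta_2$; setting $B:=A(g)$ and $\theta,\omega$ as above, the holomorphicity of $g$ gives $B$ holomorphic with the required off-diagonal $B^{-1}dB$, the regularity of the Gauss map (which, for a surface in $\mathbb{S}^{2,1}$, is automatic from the immersion being an immersion — I should double-check this, but it follows because the tangent plane varies with the position on the quadric) gives $\theta,\omega$ nowhere vanishing, and the timelike (Lorentzian) signature of the induced metric translates into $\Im m(\omega/\theta)\neq0$. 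I expect the main obstacle to be purely bookkeeping rather than conceptual: pinning down the exact correspondence between $(f_1,f_2,h_1,h_2)$ and $(\theta,\omega,H)$ including all factors of $i$, $\tfrac12$ and signs, and verifying that the ``$\widehat{\ }$'' conjugation on $\HC$ matches the conjugate-transpose $*$ on $M_2(\C)$ in the precise way needed so that $g^{-1}K\widehat g$ maps to $B\begin{pmatrix}0&i\\i&0\end{pmatrix}B^*$ and not to some variant; getting the left-versus-right multiplication conventions ($dg\,g^{-1}$ vs $g^{-1}dg$) consistent between Section \ref{app-3} and the De Sitter model is the subtle point.
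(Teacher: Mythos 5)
Your treatment of the converse direction is essentially the paper's argument: embed $\mathbb{S}^{2,1}\hookrightarrow\R^{3,1}$, get the spinor field from Theorem \ref{principal}, pin down the frame by Proposition \ref{inm reduc}(2), and push everything through the algebra isomorphism $A$ to turn $g$ into $B$, the $1$-form $[\eta]$ into $B^{-1}dB$, and $\bar g\,K\,\widehat g$ into $B\left(\begin{smallmatrix}0&i\\i&0\end{smallmatrix}\right)B^*$. Two of the conventions you flag as needing care are indeed the ones you should fix: the paper takes $B:=A(\overline{[\varphi]})=A(g^{-1})$, not $A(g)$ (since $\bar g=g^{-1}$ when $H(g,g)=1$, this is what makes $B^*=A(\widehat g)$ come out and puts $B^{-1}dB$, rather than $dB\,B^{-1}$, in the clean off-diagonal form $-A(\theta_1J+\theta_2\,iK)$); and $A(iK)=\left(\begin{smallmatrix}0&-1\\-1&0\end{smallmatrix}\right)$, not $\left(\begin{smallmatrix}0&i\\i&0\end{smallmatrix}\right)$, so the dictionary is $\omega=\theta_1+\theta_2$, $\theta=\theta_2-\theta_1$, and the nondegeneracy condition becomes $f_1^2-f_2^2=-\theta\omega/dz^2\ne0$, exactly ``$\theta,\omega$ nowhere vanishing.'' Your remark that the Gauss map is automatically regular for an immersion landing in $\mathbb{S}^{2,1}$ is correct (the shape operator in the position direction is $-\mathrm{Id}$) and is also used, without further comment, in the paper.

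For the direct statement there is a gap in your plan. You propose to deduce it from Corollary \ref{flat0}, but the hypotheses of Corollary \ref{flat0} require the two smooth functions $h_1,h_2$ as \emph{given data}, and these are not among the data of Corollary \ref{flat3}: only $B$ (equivalently $\theta,\omega$) is given. To recover $h_1,h_2$ you would already need to know that $F$ is an immersion with nondegenerate Lorentzian metric, which is what you are trying to prove — the appeal to ``choosing the conformal parameter coming from the induced metric'' presupposes the conclusion. The paper avoids this by simply asserting the direct statement as a direct computation (citing \cite{AGM}). If you want a spinorial route instead, the clean version is: check directly that $dF$ is injective and of Lorentzian signature iff $\Im m(\omega/\theta)\ne0$, and then observe that $G=ig^{-1}Ig$ is holomorphic because $g$ is, so $K=K_N=0$ by Proposition \ref{gauss0}. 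That uses Proposition \ref{gauss0} rather than Corollary \ref{flat0} and has no circularity.
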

\begin{proof} The proof of the direct statement is obtained by a direct computation; see \cite{AGM}. We thus only prove the converse statement. We suppose that there exists a flat isometric immersion $F:M\to \mathbb{S}^{2,1}$ of a simply connected timelike surface $M.$ Using the natural isometric embedding $\mathbb{S}^{2,1}\hookrightarrow \R^{3,1},$ we get a flat timelike immersion $M \hookrightarrow \R^{3,1} $ with flat normal bundle and regular Gauss map, and we can consider the complex structure $\mathcal{J}$ on $M$ such that its Gauss map is holomorphic (see Section \ref{gauss section}). We denote by $E$ its normal bundle, $\vec{H}\in\Gamma(E)$ its mean curvature vector field and $\Sigma:=M\times \HC$ the spinor bundle of $\R^{3,1}$ restricted to $M.$ The immersion $F$ is given by 
\begin{equation*}
F=\int\xi \hspace{0.2in}\mbox{where}\hspace{0.2in} \xi(X)=\laa X\cdot\varphi,\varphi\raa,
\end{equation*} for some spinor field $\varphi\in\Gamma(\Sigma)$ solution of $D\varphi=\vec{H}\cdot\varphi$ and such that $H(\varphi,\varphi)=1$ (the spinor field $\varphi$ is the restriction to $M$ of the constant spinor field $+\e$ or $-\e\in\HC$ of $\R^{3,1}$). Using Proposition \ref{inm reduc}, we have 
\begin{equation}\label{inm flat}
F=\laa e_4\cdot\varphi,\varphi\raa
\end{equation} where $e_4\in\Gamma(E)$ is normal to $\mathbb{S}^{2,1}$ in $\R^{3,1}.$ We choose a parallel frame $\tilde{s}\in\Gamma(\tilde{Q})$ adapted to $e_4,$ \textit{i.e.} such that $e_4$ is the fourth vector of $\pi(\tilde{s})\in\Gamma(Q_1\times_M Q_2):$ in $\tilde{s},$ equation \eqref{inm flat} reads 
\begin{equation}\label{inm flat1}
F=\overline{[\varphi]}K\widehat{[\varphi]} \simeq A(\overline{[\varphi]}K\widehat{[\varphi]})=A(\overline{[\varphi]})A(K)A(\widehat{[\varphi]})
\end{equation} where $[\varphi]\in\HC$ represents $\varphi$ in $\tilde{s}.$ Thus, setting $B:=A(\overline{[\varphi]})$ and using \eqref{prop iso} we have that $B$ belongs to $Sl_2(\C)$ (since $H(\varphi,\varphi)=1$) and $B^*=A(\widehat{[\varphi]}).$ From \eqref{inm flat1} we thus get $$F\simeq B\begin{pmatrix} 0 & i \\ i & 0 \end{pmatrix} B^*.$$ With respect to the complex structure induced on $M$ (by the Gauss map), $B:M\to Sl_2(\C)$ is a holomorphic map (since $[\varphi]:M\to Spin(3,1)\subset \HC$ is a holomorphic map and $A$ is $\C-$linear). Note that $dB=A(d\overline{[\varphi]}),$ using \eqref{eta0} we obtain 
\begin{align*}
B^{-1}dB&=A([\varphi]\ d\overline{[\varphi]})=-A(d[\varphi]\ \overline{[\varphi]}) \\
&= -A(\theta_1 J+\theta_2 iK)=\begin{pmatrix} 0 & -\theta_1+\theta_2 \\ \theta_1+\theta_2 \end{pmatrix},
\end{align*} where $\theta_1+\theta_2=:\omega$ and $-\theta_1+\theta_2=:\theta$ are holomorphic  $1-$forms (formula \eqref{eta0}). We also note that $\omega$ and $\theta$ nowhere vanish: if we suppose that $\omega=0$ or $\theta=0$ in $x\in M,$ using \eqref{formas hol} we get $0=\omega\theta=-\theta_1^2+\theta_2^2=-(f_1^2-f_2^2)dz^2,$ thus, from \eqref{fns} we obtain $G^*H=0$ in $x,$ in particular, the first normal space in $x$ is $1-$dimensional which is not possible since $G$ is regular; see \cite[Lemma 2.2]{AGM}. Finally, it is not difficult to verify that  $dF$ injective reads $\Im m(\frac{\omega}{\theta})\neq 0.$
\end{proof}

\paragraph*{Acknowledgements.} The author was supported by the project CONACyT 265667.

\end{document}